
\documentclass[12pt]{article}

\usepackage{amssymb}
\usepackage{graphics} 
\usepackage{color}
\usepackage{url}
\usepackage{hyperref}
\usepackage[numbers]{natbib}

\bibliographystyle{plainnat}

\textwidth=6.8true in
\textheight=9.2true in
\topmargin-0.5true in
\oddsidemargin=-0.25true in

\begin{document}

\definecolor{dred}{rgb}{0.85,0.05,0.15}

\newenvironment {proof}{{\noindent\bf Proof.}}{\hfill $\Box$ \medskip}

\newtheorem{theorem}{Theorem}[section]
\newtheorem{lemma}[theorem]{Lemma}
\newtheorem{condition}[theorem]{Condition}
\newtheorem{proposition}[theorem]{Proposition}
\newtheorem{remark}[theorem]{Remark}
\newtheorem{hypothesis}[theorem]{Hypothesis}
\newtheorem{corollary}[theorem]{Corollary}
\newtheorem{example}[theorem]{Example}
\newtheorem{definition}[theorem]{Definition}

\renewcommand {\theequation}{\arabic{section}.\arabic{equation}}

\newcommand{\red}[1]{\textcolor{red}{#1}}

\def \non{{\nonumber}}
\def \hat{\widehat}
\def \tilde{\widetilde}
\def \bar{\overline}
\def \P{\mathbb{P}}
\def \R{\mathbb{R}}
\def \1{\bf  1}

\title{\large {\bf  Markov selection for constrained martingale problems}}
                                                       
\author{\small \begin{tabular}{ll}                        
Cristina Costantini & Thomas G. Kurtz   \\\ 
Dipartimento di Economia & Departments of Mathematics and Statistics  \\\      
Universit\`a di Chieti-Pescara & University of Wisconsin - Madison \\\                 
v.le Pindaro 42 & 480 Lincoln Drive  \\\                                                          
65127 Pescara, Italy& Madison, WI  53706-1388, USA   \\\                         
c.costantini@unich.it & kurtz@math.wisc.edu    \\
& \url{http://www.math.wisc.edu/~kurtz/}  \\\                      
\end{tabular}}

\date{November 10, 2019}

\maketitle

\begin{abstract}
Constrained Markov processes, such as reflecting diffusions, 
behave as an unconstrained process in the  interior of a 
domain but upon reaching the boundary are controlled in 
some way so that they do not leave the closure of the 
domain.  In this paper, the behavior in the interior is 
specified by a generator of a Markov process, and the 
constraints are specified by a controlled generator.  
Together, the generators define a 
{\em constrained martingale problem}.  The 
desired constrained processes are constructed by first 
solving a simpler {\em controlled martingale problem\/} and then
obtaining the desired process as a time-change of the 
controlled process. 

As for ordinary martingale problems, 
it is rarely obvious that the process constructed in this 
manner is unique.  The primary goal of the paper is to 
show that from among the processes constructed in this 
way one can ``select'', in the sense of Krylov, a strong Markov 
process.  Corollaries to these constructions include the 
observation that uniqueness among strong Markov solutions implies 
uniqueness among all solutions.

These results provide useful tools for proving 
uniqueness for constrained processes including 
reflecting diffusions.

The constructions also yield
viscosity semisolutions of the resolvent equation and, 
if uniqueness holds, a viscosity solution,  
without proving a comparison principle.

We illustrate our results by applying them to reflecting diffusions  
in piecewise smooth domains. We prove existence of a 
strong Markov solution to the SDE with reflection, 
under conditions more general than in \cite{DI93}: 
In fact our conditions are known to be optimal in the case of simple, convex 
polyhedrons with constant direction of reflection on each 
face (\cite{DW95}). 
We also indicate how the results can be applied to 
processes with Wentzell boundary conditions and nonlocal 
boundary conditions.

\noindent {\bf Key words:}  constrained martingale problems, 
boundary control, Markov selection, reflecting diffusion, 
Wentzell boundary conditions, nonlocal boundary 
conditions, viscosity solution 

\noindent {\bf MSC 2010 Subject Classification:}  Primary:  
60J25 continuous-time Markov processes on general state spaces, 
60J50 Boundary theory 
Secondary:   60J60 Diffusion processes, 
60H30 Applications of stochastic analysis (to PDE, etc.)

\end{abstract}

\setcounter{equation}{0}
\section{Introduction} \label{markselect}
Let $A$ be an operator determining a Markov process $X$ 
with state space $E$ as the solution of 
the martingale problem in which
\begin{equation}M_f(t)=f(X(t))-f(X(0))-\int_0^tAf(X(s))ds\label{mgp0}\end{equation}
is required to be a martingale with respect to a 
filtration $\{{\cal F}_t\}$ for all $f\in {\cal D}(A)$, the domain of $
A$.  The 
study of stochastic processes that behave like the 
process determined by $A$ when in an open subset $E_0\subset E$, 
are constrained to stay in $\bar {E}_0$, and must behave in a 
prescribed way on $\partial E_0$, is classically carried out by 
restricting the domain ${\cal D}(A)$ by specifying boundary 
conditions, typically of the form $Bf(x)=0$ for $x\in\partial E_0$ 
for some operator $B$.  Then $X$ is required to remain in 
$\bar {E}_0$ and (\ref{mgp0}) is required to be a martingale for 
all functions in $\{f\in {\cal D}(A):Bf(x)=0,x\in\partial E_0\}$.  This 
approach to constrained Markov processes, however, 
frequently introduces difficult analytical problems in 
identifying a set of functions both satisfying the boundary 
conditions and large enough to characterize the process.  

An alternative approach by Stroock and Varadhan 
\cite{SV71} introduces a submartingale problem which 
weakens the restriction on the domain of $A$ to the 
requirement that
 $Bf(x)\geq 0$ for $x\in\partial E_0$ and then requires that for all such $
f\in {\cal D}(A)$, 
(\ref{mgp0}) is a submartingale.
This approach has been used to 
great effect by a number of authors. See, for example, 
\cite{Wei81,KR14,KR17}.

Restrictions on the values of $Bf$ on the boundary are 
dropped altogether in \cite{Kur90, Kur91} at the cost of 
introducing a boundary process $\lambda$ that, in the simplest 
settings, measures the amount of time the process 
spends on the boundary in the sense that $\lambda$ is 
nondecreasing and increases only when $X$ (or more 
precisely $X(\cdot -)$) is on the boundary.  Then $X$ is required 
to take values in $\bar {E}_0$ and for each $f\in {\cal D}(A)\cap 
{\cal D}(B)$,
\begin{equation}M_f(t)=f(X(t))-f(X(0))-\int_0^tAf(X(s))ds-\int Bf
(X(s-))d\lambda (s)\label{mgp}\end{equation}
is required to be a martingale.  As we will see, the form of the 
boundary term may be more complicated than this. 
A process that satisfies the{\em s\/}e requirements is a solution 
of the {\em constrained martingale problem.  }
Clearly, every solution of the constrained 
martingale problem is also a solution of the 
submartingale problem. This approach, or 
the corresponding one for stochastic equations, has 
been used, for example, in \cite{DW95, CK06, CK18}. 

Whether the submartingale problem approach or the 
constrained martingale problem approach is used, the 
critical issue is uniqueness of the solution, which is 
still an open question for many examples (see e.g. 
\cite{KKLW09,KW12}).

The primary goal of this paper is to prove a Markov 
selection theorem for solutions of constrained martingale 
problems.  Beyond the intrinsic interest, this selection 
theorem is frequently a crucial ingredient in proving 
uniqueness for constrained martingale problems and 
hence uniqueness for semimartingale reflecting Brownian 
motion (see, for example, \cite{KW91,TW93,DW95}) and 
reflecting diffusions.  

In the 
unconstrained case, the Markov selection theorem 
ensures the existence of strong Markov solutions to the 
martingale problem.   The construction of the strong Markov 
solution also ensures that 
uniqueness among strong Markov 
solutions implies uniqueness among all solutions. See   
\cite{SV79}, Theorems 12.2.3 and 12.2.4, for diffusions and 
\cite{EK86}, Theorem 4.5.19, for general martingale 
problems. All these results follow \cite{Kry73}.
The observation that uniqueness among strong Markov solutions 
implies uniqueness among all solutions provides 
a key tool in uniqueness arguments.  Unfortunately, 
these results do not apply immediately to solutions of 
submartingale or constrained martingale problems.  

We construct solutions of the constrained martingale 
problem by time-changing solutions of a {\em controlled }
{\em martingale problem\/} (Sections \ref{ClMP} and \ref{CMP}). 
Solutions of the controlled martingale problem evolve on a 
slower time scale and may take values in all of $E$. Their 
behavior in $E_0^c$ is determined by the operator $B$. 
Since solutions of the controlled martingale problem capture the 
intuition behind the controls that constrain the solution, 
we will refer to solutions of the constrained martingale 
problem that arise as time-changes of solutions of the 
controlled martingale problem as {\em natural}. 
We cannot rule out the possibility that there are solutions 
of the constrained martingale problem which are not 
natural, but, under very general conditions, uniqueness 
for natural solutions implies uniqueness for all solutions. 
See Remark \ref{natunq}. 

In Section \ref{clmp}, we introduce the controlled 
martingale problem and discuss properties of the 
collection of solutions.  In particular, we 
prove weak compactness of the 
collection of solutions. In Section \ref{CMP}, we introduce 
the time-changed process. Under mild conditions, the time-changed 
process is a natural solution of the constrained martingale 
problem. We note however that, even when it is not, 
the time-changed process still models a 
process constrained in $\bar {E}_0$, with behavior in the interior 
determined by $A$ and constraints determined by $B$. 

In Section \ref{selection} we prove that there exists 
a natural strong Markov solution of the constrained martingale 
problem (Theorem \ref{uniqueness} and Corollary 
\ref{M-const}) and that uniqueness among natural strong Markov  
solutions implies uniqueness among all natural solutions 
(Corollary \ref{Muniq}). 

In Section \ref{sectvisc}, we discuss connections between 
solutions of the constrained martingale problem and 
viscosity semisolutions of the corresponding resolvent 
equation. In particular, generalizing the results of 
Section 5 of \cite{CK15}, we see that existence of a 
comparison principle for the viscosity semisolutions
implies uniqueness for natural solutions of 
the constrained martingale 
problem. Conversely, uniqueness of natural solutions of the
constrained martingale problem gives a viscosity solution of the 
resolvent equation. Thus one can obtain existence of a 
viscosity solution from purely probabilistic arguments, 
without first proving 
a comparison principle for the resolvent equation. 

In Section \ref{reflect} we apply the results of Section 
\ref{selection} to diffusion processes in piecewise 
smooth domains of $\R^d$ with varying, oblique directions 
of reflection on each face.  Existence and uniqueness 
results for these processes have been obtained by many 
authors (\cite{TW93, DW95} 
for convex polyhedrons with constant direction of 
reflection on each face, \cite{Tan79, LS84, Cos92, DI93} for 
nonpolyhedral domains, etc.). For nonpolyhedral domains, 
\cite{DI93} is perhaps the most general result, 
but it still requires a condition that is not satisfied in 
some very natural examples (see Example \ref{ex}) or is 
difficult to verify in other ones (see e.g. \cite{KW12}).  
In addition, \cite{DI93} does not cover the case of cusp 
like singularities, such as in \cite{KKLW09} 
(in dimension 2, cusp like singularities are covered by \cite{CK18}).  
In \cite{TW93} and \cite{DW95} 
a key point in proving uniqueness is the fact that 
there exist strong Markov processes that satisfy the definition 
of reflecting diffusion 
and that uniqueness among these strong 
Markov processes implies uniqueness.  
By the results of Section \ref{selection}, we obtain 
existence of a strong Markov natural solution of the 
constrained martingale problem under 
conditions that coincide with 
those of \cite{DW95} in the case of simple, 
convex polyhedrons with constant direction of 
reflection on each face (see Remark \ref{polyhedron}). 
In this case, \cite{DW95} have shown that these 
conditions are necessary for existence of a semimartingale 
reflecting Brownian motion. 
Under the same assumptions, 
the results of Section \ref{selection} ensure also that 
uniqueness among strong Markov natural solutions implies 
uniqueness among all natural solutions. 
Moreover we show that the set of natural solutions 
of the constrained martingale problem coincides with the 
set of weak solutions to the corresponding stochastic 
differential equation with reflection 
(Theorem \ref{equiv}). 

Further examples of application of the results of Section 
\ref{selection} are presented in Section \ref{applications}.

\subsection{Notation}\label{notation}

\begin{itemize}
\item[]For a metric space $(E,r)$, ${\cal B}(E)$ will denote the $
\sigma$-algebra of 
Borel subsets of $E$, $B(E)$ will denote the set of bounded, 
Borel measurable functions on $E$, and $\|\cdot\|$ will denote the 
supremum norm on $B(E)$.

\item[]${\cal P}(E)$ will denote the set of probability measures on 
$(E,{\cal B}(E))$. For $F\in {\cal B}(E)$, with a slight abuse of notation, 
${\cal P}(F)$ will denote $\{P\in {\cal P}(E):\,P(F)=1\}$.

\item[]For $x\in E$ and $F\in {\cal B}(E)$, $d(x,F)$ will denote the 
distance from $x$ to $F$, that is, $d(x,F)=\inf_{y\in F}r(x,y)$.

\item[]${\bf 1}$ will denote the function identically equal 
to $1$ and, for $F\in {\cal B}(E)$, ${\bf 1}_F$ will denote the indicator function 
of $F$.

\item[]$|I|$ will denote the cardinality of a finite set $I$.

\item[]For any function or operator, ${\cal R}(\cdot )$ will denote the range 
and ${\cal D}(\cdot )$ the domain.

\item[] ${\cal L}(\cdot )$ will denote the distribution of a stochastic process 
or a random variable.

\item[]If $Z$ is a stochastic process defined on an arbitrary 
probability space, $\{{\cal F}^{Z}_t\}$ will denote the filtration generated 
by $Z$.

\item[]If $Z$ is a stochastic process defined on an arbitrary 
filtered probability space, $Z$ will also denote the 
canonical process defined on the path space. 
$\{{\cal B}_t\}$ will denote the filtration generated by the canonical 
process. 

\end{itemize}

\setcounter{equation}{0}
\section{Controlled martingale problems}\label{ClMP}

We use the control formulation of constrained martingale 
problems given in \cite{Kur91} rather than the earlier 
version given in \cite{Kur90} that was based on 
``patchwork'' martingale problems.  The control 
formulation may be less intuitive, but it is more general 
and notationally 
simpler, and models described in the earlier manner can 
be translated to the control formulation.

Let $E$ be a compact metric space, and let $E_0$ be an open 
subset of $E$.  The requirement that $E$ be compact is not 
particularly restrictive since, for example, for most 
processes in $\R^d$, one can take $E$ to be the 
one-point compactification of $\R^d$.  Let $A\subset C(E)\times C(E)$ with 
$(1,0)\in A$.  

Let $U$ also be a compact metric space, and let 
$\Xi$ be a closed subset of $E_0^c\times U$.  For each $x\in E_0^
c$, 
let $\xi_x\equiv \{u:(x,u)\in\Xi \}$ be the set of controls that are 
admissible at $x$, and define  $F_1\equiv \{x\in E_0^c:\,\xi_x\neq
\emptyset \}$ which is the set of 
points at which a control exists.  Let $B\subset C(E)\times C(\Xi 
)$ 
with $(1,0)\in B$.  Using $A$ and $B$, we define a controlled process 
$Y$ that outside $E_0$ evolves on a slower time scale than 
the desired process $X$.  Like $X$, inside $E_0$ the behavior of 
$Y$ is determined by $A$, and outside $E_0$ the behavior of $Y$ 
is determined by $B$. In particular, $Y$ 
may take values in $\bar {E}_0\cup F_1$.

Let ${\cal L}_U$ be the space of measures on $[0,\infty )\times U$ 
such that $\mu ([0,t]\times U)<\infty$ for all $t>0$.  ${\cal L}_
U$ is
topologized so that  
$\mu_n\in {\cal L}_U\rightarrow\mu\in {\cal L}_U$ if and only if 
\[\int_{[0,\infty )\times U}f(s,u)\mu_n(ds\times du)\rightarrow\int_{
[0,\infty )\times U}f(s,u)\mu (ds\times du)\]
for all 
continuous $f$ with compact support in $[0,\infty )\times U$. It is 
possible to define a metric on ${\cal L}_U$ that induces the above 
topology and makes ${\cal L}_U$ into a complete, separable metric 
space. We will say that an ${\cal L}_U$-valued random variable 
$\Lambda_1$ is adapted to a filtration $\{{\cal F}_t\}$ if 
\[\Lambda_1([0,\cdot ]\times C)\mbox{\rm \ is }\{{\cal F}_t\}-\mbox{\rm adapted}
,\quad\forall C\in {\cal B}(U).\]

\begin{definition}\label{clmp}
$(Y,\lambda_0,\Lambda_1)$ is a solution of the 
{\em controlled martingale problem }
for $(A,E_0,B,\Xi )$, if $Y$ is a process in $D_E[0,\infty )$, 
$\lambda_0$ is nonnegative and nondecreasing and increases only 
when $Y\in\bar {E}_0$, $\Lambda_1$ is a random measure in ${\cal L}_
U$ such that 
\begin{equation}\lambda_1(t)\equiv\Lambda_1([0,t]\times U)=\int_{[0,t]\times U}
{\bf 1}_{\Xi}(Y(s),u)\Lambda_1(ds\times du),\label{Lam1supp}\end{equation}
\[\lambda_0(t)+\lambda_1(t)=t,\]
and there exists a filtration $\{{\cal F}_t\}$ such that $Y$, $\lambda_
0$, and $\Lambda_1$ are 
$\{{\cal F}_t\}$-adapted and 
\begin{equation}f(Y(t))-f(Y(0))-\int_0^tAf(Y(s))d\lambda_0(s)-\int_{
[0,t]\times U}Bf(Y(s),u)\Lambda_1(ds\times du)\label{mgp1}\end{equation}
is an $\{{\cal F}_t\}$-martingale for all $f\in {\cal D}\equiv {\cal D}
(A)\cap {\cal D}(B)$. By the 
continuity of $f$, we can 
assume, without loss of generality, that $\{{\cal F}_t\}$ is right 
continuous. 
\end{definition}

\begin{remark}\label{exist}
To get some intuition on $\lambda_0$ and $\Lambda_1$, consider the case 
in which $A$ is a bounded Markov process generator and at each point 
$x\in (E_0)^c$ there is exactly one control $u(x)$, so $B$ is the
bounded Markov process generator that, at $x$, produces a jump 
$u(x)$. Then $Y$ is the pure jump process with generator 
$Af(x)\,{\bf 1}_{E_0}(x)+Bf(x,u(x))\,{\bf 1}_{(E_0)^c}(x)$. 
$\lambda_0(t)$ and $\Lambda_1([0,t]\times C)$ are the time that $Y$ spends in $
E_0$ and the 
time that $Y$ spends in $(E_0)^c$ while the control lies in $C$, 
respectively, i.e. 
\[\lambda_0(t):=\int_0^t{\bf 1}_{E_0}(Y(s))\,ds,\quad\Lambda_1([0
,t]\times C):=\int_0^t{\bf 1}_{(E_0)^c}(Y(s))\,{\bf 1}_C(u(Y(s)))\,
ds.\]
For general $A$ and $B$, frequently $(Y,\lambda_0,\Lambda_1)$ can be
obtained as a limit  of a sequence 
$\{(Y^n,\lambda_0^n,\Lambda_1^n)\}$ corresponding to  
a sequence of bounded Markov process generators $\{(A^n,B^n)\}$ 
(with jump rates going to infinity, if $A$, $B$ are not bounded) 
that approximates $(A,B)$. This construction is carried out 
rigorously in Theorem 2.2 of \cite{Kur91} and yields a 
quite general method to obtain solutions of the controlled 
martingale problem. In the case when there is a 
corresponding patchwork martingale problem, as 
defined in \cite{Kur90} (see Definition \ref{pwmp}), this 
essentially amounts to constructing a solution of the 
patchwork martingale problem, which will be a solution 
of the controlled martingale problem as well: This 
approach is followed in Section \ref{reflect}. See also 
Section \ref{Wentz} for an example of 
another construction by approximation. 
\end{remark}

\begin{remark}\label{stsp}
Note that the requirement that $\lambda_0(t)+\lambda_1(t)=t$ implies any 
solution of the controlled martingale problem for 
$(A,E_0,B,\Xi )$ must satisfy $Y\in D$$_{\bar {E}_0\cup F_1}[0,\infty 
)$. In fact, 
if $Y(t)\in\big(\bar {E}_0\cup F_1\big)^c$ for some $t$, necessarily $
Y(s)\in\big(\bar {E}_0\cup F_1\big)^c$ for all 
$s\in [t,t')$ for some $t'>t$. Then  
$\lambda_0(t')-\lambda_0(t)=\lambda_1(t')-\lambda_1(t)=0$, because $
\lambda_1$ increases only when $Y\in F_1$, 
by $(\ref{Lam1supp})$, and $\lambda_0$ increases only 
when $Y\in\bar {E}_0$, and this 
contradicts $t'-t=\big(\lambda_0(t')-\lambda_0(t))\big)+\big(\lambda_
1(t')-\lambda_1(t)\big)$.
\end{remark}

\begin{remark}
If $(Y,\lambda_0,\Lambda_1)$ is a solution of the 
controlled martingale problem for $(A,E_0,B,\Xi )$ with 
distribution $P$, the canonical process on 
$D_E[0,\infty )\times C_{[0,\infty )}[0,\infty )\times {\cal L}_U$  under $
P$ is also obviously a 
solution with respect to the filtration $\{{\cal B}_t\}$ generated by 
itself. As mentioned in Section \ref{notation}, we denote 
the canonical process under $P$ by $(Y,\lambda_0,\Lambda_1)$ as well. 
\end{remark}

\begin{remark}
One can always assume, without loss of generality, that  
$\{{\cal F}_t\}$ is complete. Then, denoting by $\{{\cal F}_t^Y\}$ the 
smallest complete and right continuous filtration to 
which $Y$ is adapted, 
$\lambda_0$ and $\Lambda_1$  can be replaced by their dual 
predictable projections on $\{{\cal F}_t^Y\}$ so that (\ref{mgp1})  is a 
$\{{\cal F}_t^Y\}$-martingale for each $f\in {\cal D}$ (see Lemma 6.1, \cite{KS01}). 
\end{remark}

\begin{remark}
Note that the controlled martingale problem can also be 
formulated by setting
\[Cf(y,u,v)=vAf(y)+(1-v)Bf(y,u)\]
with controls $(u,v)\in U\times [0,1]$.  The analog of $\Xi$ is 
$\Xi_0\subset E\times U\times [0,1]$ such that
\begin{eqnarray*}
\Xi_0\cap E_0\times U\times [0,1]&=&E_0\times U\times \{1\}\\
\Xi_0\cap\partial E_0\times U\times [0,1]&=&(\partial E_0\times U
\cap\Xi )\times [0,1]|\cup\partial E_0\times U\times \{1\}\\
\Xi_0\cap\bar {E}^c_0\times U\times [0,1]&=&(\bar {E}^c_0\times U
\cap\Xi )\times \{0\}.\end{eqnarray*}
Then $(Y,\mu )$, with $Y\in D_E[0,\infty )$ and $\mu$ a ${\cal P}
(U\times [0,1])$-valued 
process 
is a solution of the controlled martingale for 
$(C,\Xi_0)$ if there exists a filtration $\{{\cal F}_t\}$ such that $
(Y,\mu )$ is 
$\{{\cal F}_t\}$-adapted and
\[f(Y(t))-f(Y(0))-\int_0^t\int_{U\times [0,1]}Cf(Y(s),u,v)\mu_s(d
u\times dv)ds\]
is an $\{{\cal F}_t\}$-martingale.  Every solution of the controlled 
martingale problem for $(C,\Xi_0)$ gives a solution for the 
controlled martingale problem for $(A,E_0,B,\Xi )$ by defining
\[\lambda_0(t)=\int_0^t\int_{U\times [0,1]}v\mu_s(du\times dv)ds\]
and 
\[\Lambda_1(D)=\int_0^{\infty}\int_{U\times [0,1]}(1-v){\bf  1}_D(
s,u)\mu_s(du\times dv)ds.\]
Conversely, every solution of the controlled martingale 
problem for $(A,E_0,B,\Xi )$ gives a solution of the controlled 
martingale problem for $(C,\Xi_0)$.
\end{remark}

\begin{definition}
We define $\Pi\subset {\cal P}(D_E[0,\infty )\times C_{[0,\infty 
)}[0,\infty )\times {\cal L}_U)$ to be the collection of 
the distributions of solutions of the controlled 
martingale problem for $(A,E_0,B,\Xi )$, and for $\nu\in {\cal P}
(E)$, 
$\Pi_{\nu}\subset\Pi$ to be the collection of distributions such that $
Y(0)$ 
has distribution $\nu$.  

${\cal P}_0$ denotes the collection of $\nu\in {\cal P}(\bar {E}_
0\cup F_1)$ such that 
$\Pi_{\nu}\neq\emptyset$.
\end{definition}

\begin{lemma}\label{picompact}
If ${\cal D}$ is dense in $C(E)$, then the collection of distributions 
of solutions $(Y,\lambda_0,\Lambda_1)$ of 
the controlled martingale problem is compact in 
${\cal P}(D_E[0,\infty )\times C_{[0,\infty )}[0,\infty )\times {\cal L}_
U)$  in the sense of weak convergence 
 (taking the Skorohod topology on $D_E[0,\infty )$ 
and the compact uniform topology on $C_{[0,\infty )}[0,\infty )$).  
Consequently, $\Pi$ and $\Pi_{\nu}$, $\nu\in {\cal P}_0$, are compact and 
convex.
\end{lemma}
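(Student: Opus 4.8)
The plan is to establish relative compactness of the family of solutions in the stated product topology and then to check that the limit of any weakly convergent sequence is again a solution, so that the family is in fact closed, hence compact; convexity of $\Pi$ and $\Pi_\nu$ will follow from the linearity of the martingale condition in the joint law. I would work on the canonical space $D_E[0,\infty)\times C_{[0,\infty)}[0,\infty)\times{\cal L}_U$, so that a solution is identified with its distribution.

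First, compactness of the marginals. The $E$-valued marginal: since $E$ is compact, the compact containment condition for $\{{\cal L}(Y)\}$ is automatic, and I would verify the Aldous--Kurtz tightness criterion using the martingale property (\ref{mgp1}). For $f\in{\cal D}$, $f(Y(t))$ differs from a martingale by $\int_0^t Af(Y(s))\,d\lambda_0(s)+\int_{[0,t]\times U}Bf(Y(s),u)\,\Lambda_1(ds\times du)$; since $Af,Bf$ are bounded (they are continuous functions on the compact spaces $E$, $\Xi$) and $\lambda_0(t)+\lambda_1(t)=t$, this finite-variation part has modulus of continuity controlled uniformly by $t\mapsto t$, which gives tightness of $\{f(Y)\}$ for each $f$ in the dense set ${\cal D}\subset C(E)$, and hence, by the standard criterion (e.g. Ethier--Kurtz, Theorem 3.9.1), tightness of $\{{\cal L}(Y)\}$ in $D_E[0,\infty)$. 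The $\lambda_0$-marginal: since $0\le\lambda_0(t)-\lambda_0(s)\le t-s$, each $\lambda_0$ is $1$-Lipschitz with $\lambda_0(0)=0$, so $\{{\cal L}(\lambda_0)\}$ sits inside a compact subset of $C_{[0,\infty)}[0,\infty)$ by Arzel\`a--Ascoli. The $\Lambda_1$-marginal: for each $t$, $\Lambda_1([0,t]\times U)=\lambda_1(t)=t-\lambda_0(t)\le t$ is deterministically bounded, and $U$ is compact, so the total masses on each $[0,t]\times U$ are uniformly bounded; this gives tightness in ${\cal L}_U$ with its vague topology. Tightness of the joint law then follows from tightness of the marginals.

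Second, closedness under weak limits. Let $(Y^n,\lambda_0^n,\Lambda_1^n)\Rightarrow(Y,\lambda_0,\Lambda_1)$; by Skorohod representation I may assume a.s.\ convergence on one probability space. I must check that the limit satisfies all the requirements of Definition \ref{clmp}. That $\lambda_0$ is nonnegative, nondecreasing, and $1$-Lipschitz, and that $\lambda_0(t)+\lambda_1(t)=t$, pass to the limit directly (the constraint $\lambda_0(t)+\lambda_1(t)=t$ is preserved at continuity points of $\lambda_0$ and $t\mapsto t$, hence everywhere by continuity of $\lambda_0$). That $\lambda_0$ increases only when $Y\in\bar E_0$ follows from a standard argument: if $Y$ spends a time interval in the open set $\bar E_0^c$, then so do the $Y^n$ eventually (using a.s.\ Skorohod convergence and that $\bar E_0^c$ is open), so $\lambda_0^n$ does not increase there, and the limit inherits this. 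The support property (\ref{Lam1supp}) for $\Lambda_1$ is preserved because $\Xi$ is closed, so $(Y,u)\mapsto{\bf 1}_\Xi(Y,u)$ is upper semicontinuous and the corresponding inequality passes to the limit; combined with $\lambda_0(t)+\lambda_1(t)=t$ and $\lambda_1(t)=\Lambda_1([0,t]\times U)$ one recovers (\ref{Lam1supp}). Finally, for each $f\in{\cal D}$ I pass (\ref{mgp1}) to the limit: the terms $f(Y(t))-f(Y(0))$ converge by continuity of $f$; $\int_0^t Af(Y^n(s))\,d\lambda_0^n(s)\to\int_0^t Af(Y(s))\,d\lambda_0(s)$ and $\int_{[0,t]\times U}Bf(Y^n,u)\,\Lambda_1^n\to\int_{[0,t]\times U}Bf(Y,u)\,\Lambda_1$ by joint convergence of $(Y^n,\lambda_0^n)$ and of $(Y^n,\Lambda_1^n)$ together with continuity and boundedness of $Af$ and $Bf$ (taking $t$ outside the at most countably many fixed times of discontinuity, and then extending by right-continuity); the martingale property is then preserved under bounded convergence, so the limit process is a martingale with respect to its own filtration. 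Hence the limit is a solution, the family of distributions is closed, and being also tight it is compact; $\Pi$ and each $\Pi_\nu$, $\nu\in{\cal P}_0$, are therefore compact, and convexity is immediate since a convex combination of solution laws is the law of a solution (the martingale identity (\ref{mgp1}) is preserved under mixtures).

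The main obstacle I anticipate is the careful handling of the boundary-integral term $\int_{[0,t]\times U}Bf(Y(s),u)\,\Lambda_1(ds\times du)$ under weak convergence: one needs joint convergence of $(Y^n,\Lambda_1^n)$ in the product topology, and then to argue that integrating the \emph{continuous} bounded function $Bf$ against the vaguely convergent measures $\Lambda_1^n$, with the time-variable evaluated along $Y^n$, really does converge — the subtlety being the interaction of the Skorohod topology on $Y$ (possible jumps) with the vague topology on $\Lambda_1$, and fixing a version of the limit identity that holds simultaneously for all $t$ via right-continuity. The other delicate point is verifying (\ref{Lam1supp}) in the limit, i.e.\ that no mass of $\Lambda_1$ escapes onto $\{(Y(s),u):(Y(s),u)\notin\Xi\}$, which is exactly where closedness of $\Xi$ and the time-constraint $\lambda_0+\lambda_1=t$ are used in tandem.
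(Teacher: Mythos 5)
Your proposal is correct and follows essentially the same route as the paper: relative compactness of the $Y$-marginal via Theorems 3.9.1 and 3.9.4 of Ethier--Kurtz using the martingale decomposition with Lipschitz finite-variation part, relative compactness of $\lambda_0$ and $\Lambda_1$ from the $1$-Lipschitz bound $\lambda_0(t)+\lambda_1(t)=t$, closedness by passing (\ref{mgp1}) to the limit (the paper invokes uniform integrability, which here is just boundedness, as you note), and convexity by linearity. You simply spell out in more detail the ``standard arguments'' the paper leaves implicit, including the two genuinely delicate points (the boundary integral under joint Skorohod--vague convergence and the preservation of (\ref{Lam1supp}) via closedness of $\Xi$).
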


\begin{proof}
Relative compactness for the family of $Y$ follows from 
Theorems 3.9.4 and 3.9.1 of \cite{EK86}.  The relative 
compactness of the $\lambda_0$ and $\Lambda_1$ is immediate, as $
\lambda_0$ and $\lambda_1$ 
are Lipschitz continuous with Lipschitz constant $1$. The 
fact that every limit point is a solution of the 
controlled martingale problem follows by standard 
arguments from the properties of weakly converging measures 
and from uniform 
integrability of the martingales in (\ref{mgp1}). 

Convexity is immediate.
\end{proof}

\subsection{Closure properties of $\Pi$ }

\begin{lemma}\label{abscon}
Let $(Y,\lambda_0,\Lambda_1)$ be a solution of the controlled
martingale problem for $(A,E_0,B,\!\Xi )$ with filtration $\{{\cal F}_
t\}$.  
Let $H\geq 0$ be a ${\cal F}_0$-measurable random variable such that 
$E[H]=1$. 
Then $P^{H}\in {\cal P}(D_E[0,\infty )\times C_{[0,\infty )}[0,
\infty )\times {\cal L}_U)$ defined by 
\[P^{H}(C)\equiv E[H\,{\bf 1}_C(Y,\lambda_0,\Lambda_1)],\quad C
\in {\cal B}(D_E[0,\infty )\times C_{[0,\infty )}[0,\infty )\times 
{\cal L}_U),\]
is in $\Pi$.
\end{lemma}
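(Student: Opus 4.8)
The plan is to show that $P^H$ is the distribution of a solution of the controlled martingale problem. The natural candidate is simply the triple $(Y,\lambda_0,\Lambda_1)$ itself, but considered on the original probability space equipped with the \emph{tilted} probability measure $Q$ defined by $dQ = H\,d\mathbb{P}$. Under $Q$, the pushforward of $(Y,\lambda_0,\Lambda_1)$ is exactly $P^H$ by definition, so it suffices to verify that, under $Q$, the triple still satisfies all the requirements of Definition \ref{clmp}. The pathwise requirements -- that $Y\in D_E[0,\infty)$, that $\lambda_0$ is nonnegative, nondecreasing and increases only when $Y\in\bar E_0$, that $\Lambda_1\in{\cal L}_U$ satisfies $(\ref{Lam1supp})$, and that $\lambda_0(t)+\lambda_1(t)=t$ -- are properties of individual sample paths and hold $\mathbb{P}$-a.s.; since $Q\ll\mathbb{P}$ they continue to hold $Q$-a.s. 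So the only substantive point is the martingale property of $(\ref{mgp1})$ under $Q$.

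For the martingale property, I would keep the \emph{same} filtration $\{{\cal F}_t\}$. Write $M_f(t)$ for the process in $(\ref{mgp1})$, which is a $(\{{\cal F}_t\},\mathbb{P})$-martingale. Because $H$ is ${\cal F}_0$-measurable with $E[H]=1$, the density process of $Q$ with respect to $\mathbb{P}$ relative to $\{{\cal F}_t\}$ is the constant-in-time martingale $H$ itself. By the standard change-of-measure criterion (Bayes' rule for conditional expectations), a process $N$ is a $(\{{\cal F}_t\},Q)$-martingale if and only if $N H$ is a $(\{{\cal F}_t\},\mathbb{P})$-martingale; here $N_t H = M_f(t)H$, and since $H$ is ${\cal F}_0$-measurable, $E_{\mathbb{P}}[M_f(t)H\mid{\cal F}_s] = H\,E_{\mathbb{P}}[M_f(t)\mid{\cal F}_s] = H M_f(s)$ for $s\le t$. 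Thus $M_f H$ is a $\mathbb{P}$-martingale and hence $M_f$ is a $Q$-martingale, for every $f\in{\cal D}$. One should also note $M_f(0)=0$, so integrability is not an issue beyond what is already assumed; if desired one can first localize or use boundedness of $f$, $Af$, $Bf$ together with the a.s.\ finiteness of $\lambda_0(t)+\lambda_1(t)=t$ to get genuine integrability of $M_f(t)$ under $\mathbb{P}$, which transfers to $Q$ since $H\in L^1$ and... actually one needs $M_f(t)\in L^1(Q)$, i.e. $E_\mathbb{P}[|M_f(t)|H]<\infty$; this follows because $f,Af,Bf$ are bounded continuous and $\lambda_0(t),\lambda_1(t)\le t$, so $|M_f(t)|$ is bounded by a deterministic constant, making the integrability automatic.

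Having verified Definition \ref{clmp} for $(Y,\lambda_0,\Lambda_1)$ under $Q$, its law is a solution distribution, i.e. an element of $\Pi$, and that law is precisely $P^H$. The main (and only) obstacle is the bookkeeping around integrability and the change-of-measure identity for martingales; both are routine once one observes that $H$ being ${\cal F}_0$-measurable makes the density process constant, so no Girsanov-type drift correction appears and the martingale in $(\ref{mgp1})$ is preserved verbatim. A brief remark could also record that this lemma, combined with Lemma \ref{picompact}, is what will later give extreme points of $\Pi_\nu$ the Markov-type conditioning property needed for the selection argument, but that lies beyond the statement at hand.
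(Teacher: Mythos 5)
Your proof is correct and is essentially the paper's argument: the paper's entire proof is the one-line observation that a $\{{\cal F}_t\}$-martingale $M$ with $|M(t)|\leq C(1+t)$ remains a martingale under the measure tilted by the ${\cal F}_0$-measurable density $H$, which is exactly the change-of-measure computation you carry out (the pathwise properties transferring by absolute continuity being implicit there). Your integrability bookkeeping, using boundedness of $f$, $Af$, $Bf$ and $\lambda_0(t)+\lambda_1(t)=t$, is precisely what justifies the paper's bound $|M_f(t)|\leq C(1+t)$.
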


\begin{proof}
If $M$ is a $\{{\cal F}_t\}$-martingale under $P$ and $|M(t)|\leq 
C(1+t)$ for some 
$C>0$, then $M$ is a $\{{\cal F}_t\}$-martingale under $P^H$. 
\end{proof}

\begin{lemma}\label{clin}
\item[a)]If $\nu_1<<\nu_2$ and $\Pi_{\nu_2}\neq\emptyset$, then $
\Pi_{\nu_1}\neq\emptyset$.

\item[b)]There exists a closed $F_2\subset\bar {E}_0\cup F_1$ such that 
${\cal P}_0={\cal P}(F_2)$.
\end{lemma}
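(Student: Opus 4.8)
\medskip

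The plan is to prove part (a) first and then obtain part (b) as a consequence of (a) together with closure properties of $\Pi$ already established. For part (a), suppose $\nu_1 \ll \nu_2$ with $\Pi_{\nu_2}\neq\emptyset$, and let $(Y,\lambda_0,\Lambda_1)$ be a solution of the controlled martingale problem with $Y(0)\sim\nu_2$, carried on a filtered space with filtration $\{{\cal F}_t\}$. Let $g=\frac{d\nu_1}{d\nu_2}$ be the Radon--Nikodym derivative and set $H=g(Y(0))$, which is ${\cal F}_0$-measurable, nonnegative, and satisfies $E[H]=\int g\,d\nu_2=1$. By Lemma 1.11, $P^H(C)=E[H\,\mathbf{1}_C(Y,\lambda_0,\Lambda_1)]$ is in $\Pi$. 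It remains only to check that under $P^H$ the initial distribution of $Y$ is $\nu_1$: for $\Gamma\in{\cal B}(E)$, $P^H(Y(0)\in\Gamma)=E[g(Y(0))\mathbf{1}_\Gamma(Y(0))]=\int_\Gamma g\,d\nu_2=\nu_1(\Gamma)$. Hence $P^H\in\Pi_{\nu_1}$, so $\Pi_{\nu_1}\neq\emptyset$.

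\medskip

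For part (b), I would first argue that ${\cal P}_0$ is closed under countable convex combinations: given $\nu_n\in{\cal P}_0$ and weights $c_n\geq 0$ with $\sum_n c_n=1$, each $\nu_n\ll\nu:=\sum_n c_n\nu_n$, so by part (a), $\Pi_{\nu_n}\neq\emptyset$; picking $P_n\in\Pi_{\nu_n}$, the mixture $\sum_n c_n P_n$ lies in $\Pi$ by convexity (Lemma 1.10, extended to countable combinations — this extension follows because a countable convex combination of solution laws is a weak limit of finite ones, and $\Pi$ is closed by Lemma 1.10) and has $Y(0)\sim\nu$, so $\nu\in{\cal P}_0$. Next, since ${\cal P}_0\subset{\cal P}(\bar E_0\cup F_1)$ and $\bar E_0\cup F_1$ is a subset of the compact metric space $E$, standard measure theory gives a maximal support: let $F_2$ be the topological support of a probability measure $\nu^*$ that is ``maximal'' in the sense that $\nu\ll\nu^*$ for every $\nu\in{\cal P}_0$. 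Such a $\nu^*$ is constructed by taking a countable family $\{\nu_k\}\subset{\cal P}_0$ whose supports are cofinal (possible because ${\cal P}(E)$ is separable, hence so is ${\cal P}_0$ in total variation-generated structure — more carefully, one uses that the supports of members of ${\cal P}_0$ have a countable cofinal subfamily under inclusion of closed sets in a separable metric space) and setting $\nu^*=\sum_k 2^{-k}\nu_k$, which is in ${\cal P}_0$ by the previous step. Then $F_2:=\mathrm{supp}(\nu^*)$ is closed and contained in $\bar E_0\cup F_1$.

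\medskip

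Finally I would verify ${\cal P}_0={\cal P}(F_2)$. The inclusion ${\cal P}_0\subseteq{\cal P}(F_2)$ holds because any $\nu\in{\cal P}_0$ satisfies $\nu\ll\nu^*$, hence $\nu$ is supported on $\mathrm{supp}(\nu^*)=F_2$. For the reverse inclusion, take any $\mu\in{\cal P}(F_2)$. Since $F_2=\mathrm{supp}(\nu^*)$, it is not true in general that $\mu\ll\nu^*$, so part (a) does not apply directly; instead I would use the mixture $\rho:=\tfrac12(\mu+\nu^*)$ and observe $\nu^*\ll\rho$, so $\rho$ cannot be handled by (a) either in the needed direction. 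The correct route: note $\mu\ll\rho$ and, to place $\rho$ in ${\cal P}_0$, one needs a solution with initial law $\rho$ — but $\rho$ is a convex combination of $\mu$ (unknown) and $\nu^*\in{\cal P}_0$, which does not help. I therefore expect the real content of (b), and the main obstacle, to be precisely showing ${\cal P}(F_2)\subseteq{\cal P}_0$, i.e. that solvability propagates to \emph{every} measure supported on $F_2$, not just those absolutely continuous with respect to some fixed element. This should be handled by a more careful construction: rather than a single $\nu^*$, use that for each $x\in F_2$ there is (by definition of $F_2$ as the support, after a measurable selection) a family of solutions whose initial laws charge every neighborhood of $x$, then build, via a measurable selection of solution laws $x\mapsto P_x\in\Pi_{\delta_x}$-like objects (or laws with initial distribution concentrating near $x$) and an integration argument over $\mu$, an element of $\Pi_\mu$. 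The measurable selection and the integration (mixing) step, together with checking the mixed law solves the controlled martingale problem, is where the work lies; the martingale property of the mixture follows from Lemma 1.11's proof technique (uniform linear growth bounds on the martingales in (1.3)), and the support conditions on $\lambda_0,\Lambda_1$ are preserved under mixing since they are pathwise constraints.
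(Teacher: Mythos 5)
Part (a) of your proposal is correct and is exactly the paper's argument: take $H=\frac{d\nu_1}{d\nu_2}(Y(0))$ and apply Lemma \ref{abscon}.

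Part (b), however, has a genuine gap, and one of its intermediate claims is false. The false claim is the existence of $\nu^*\in{\cal P}_0$ with $\nu\ll\nu^*$ for \emph{every} $\nu\in{\cal P}_0$: since ${\cal P}_0$ will typically contain uncountably many mutually singular measures (e.g.\ all the point masses $\delta_x$, $x\in F_2$, which is precisely what part (b) asserts), no single probability measure can dominate them all; having $\mathrm{supp}(\nu)\subset\mathrm{supp}(\nu^*)$ does not give $\nu\ll\nu^*$. This does not sink the inclusion ${\cal P}_0\subseteq{\cal P}(F_2)$, which is immediate once $F_2$ is taken to be the closure of $\bigcup_{\nu\in{\cal P}_0}\mathrm{supp}(\nu)$, but it means your route to it is broken. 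More importantly, you correctly identify ${\cal P}(F_2)\subseteq{\cal P}_0$ as the real content and then leave it as a sketch of a measurable-selection-plus-mixing argument that you do not carry out; that is where the proof actually lives, and the missing step is concrete.

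The paper closes this gap without any measurable selection, using only Lemma \ref{abscon} and the compactness and convexity of $\Pi$ from Lemma \ref{picompact}. First, for $P\in\Pi_\nu$ and $z\in\mathrm{supp}(\nu)$, condition on shrinking balls: with $H_\epsilon=\nu(B_\epsilon(z))^{-1}{\bf 1}_{B_\epsilon(z)}(Y(0))$, Lemma \ref{abscon} gives $P^{H_\epsilon}\in\Pi$, and compactness of $\Pi$ yields a limit point $P_z\in\Pi_{\delta_z}$ as $\epsilon\to 0$; a further limit extends this to every $x$ in the closure $F_2$, so $\Pi_{\delta_x}\neq\emptyset$ for all $x\in F_2$. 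Then convexity of $\Pi$ handles finite convex combinations $\sum_i p_i\delta_{x^i}$, $x^i\in F_2$, and since these are weakly dense in ${\cal P}(F_2)$ and $\Pi$ is compact (with $P\mapsto{\cal L}(Y(0))$ continuous), $\Pi_\mu\neq\emptyset$ for every $\mu\in{\cal P}(F_2)$. Your proposal contains neither the shrinking-ball limit that produces solutions from point masses nor the approximation-by-discrete-measures step that replaces the selection/integration argument, so as written it does not establish part (b).
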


\begin{proof}
Taking $H=\frac {d\nu_1}{d\nu_2}$, part (a) follows from Lemma \ref{abscon}.

Suppose $P\in\Pi_{\nu}$ and $z\in\mbox{\rm supp}(\nu )$.  Then for each $
\epsilon >0$, 
$\nu (B_{\epsilon}(z))>0$, and setting $H_{\epsilon}(x)=\frac 1{\nu 
(B_{\epsilon}(z))}{\bf 1}_{B_{\epsilon}(z)}(x)$, by 
Lemma \ref{abscon}, $P^{H_{\epsilon}}\in\Pi$. By the compactness of $
\Pi$, 
$P^{H_{\epsilon}}$ will have at least one limit point $P_z$ as $\epsilon
\rightarrow 0$, and 
$P_z\in\Pi_{\delta_z}$.  

Let $F_2$ be the closure of $\cup_{\nu\in {\cal P}_0}\mbox{\rm supp}
(\nu )$.  
Then for each $x\in F_2$, $\Pi_{\delta_x}\neq\emptyset$, and by convexity, for 
$\nu_{x,p}=\sum_{i=1}^mp_i\delta_{x^i}$, $x^i\in F_2$, $p_i\geq 0$, $
\sum_{i=1}^mp_i=1$, $\Pi_{\nu_{x,p}}\neq\emptyset$.  Since 
every $\nu\in {\cal P}(F_2)$ can be approximated by probability 
measures of this form, $\Pi_{\nu}\neq\emptyset$ for each $\nu\in 
{\cal P}(F_2)$.
\end{proof}

\begin{lemma}\label{ctrans}
Define $Y^{\tau}$$,\lambda_0^{\tau}$, and $\Lambda_1^{\tau}$ by 
\begin{equation}\begin{array}{c}
Y^{\tau}(t)=Y(\tau +t),\quad\lambda_0^{\tau}(t)=\lambda_0(\tau +t
)-\lambda_0(\tau ),\quad t\geq 0,\\
\Lambda_1^{\tau}([0,t]\times C)=\Lambda_1([\tau ,\tau +t]\times C
),\quad t\geq 0,C\in {\cal B}(U).\end{array}
\label{shift}\end{equation}
Note that $Y^{\tau}$$,\lambda_0^{\tau}$, and $\Lambda_1^{\tau}$ 
are adapted to the filtration $\{{\cal F}_{\tau +t}\}$.

Then the measure 
$P^{\tau ,H}\in {\cal P}(D_E[0,\infty )\times C_{[0,\infty )}[0,\infty 
)\times {\cal L}_U)$ defined by 
\begin{equation}P^{\tau ,H}(C)=E[H{\bf 1}_C(Y^{\tau},\lambda_0^{\tau}
,\Lambda_1^{\tau})],\quad C\in {\cal B}(D_E[0,\infty )\times C_{[
0,\infty )}[0,\infty )\times {\cal L}_U)\label{cphdist}\end{equation}
is the distribution of 
a solution of the controlled martingale problem for 
$(A,E_0,B,\Xi )$.
\end{lemma}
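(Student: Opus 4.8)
The plan is to read off from Definition~\ref{clmp} the two types of requirements on a solution — the pathwise/structural ones and the martingale one — and to verify each for the shifted triple $(Y^\tau,\lambda_0^\tau,\Lambda_1^\tau)$ reweighted by $H$. Throughout I take $\tau$ to be a finite $\{\mathcal F_t\}$-stopping time and $H\ge0$ an $\mathcal F_\tau$-measurable random variable with $E[H]=1$ (so that $\tau=0$ recovers Lemma~\ref{abscon}), and I work with the shifted filtration $\{\mathcal G_t\}\equiv\{\mathcal F_{\tau+t}\}$, noting that $Y^\tau$, $\lambda_0^\tau$, $\Lambda_1^\tau$ are $\{\mathcal G_t\}$-adapted and that $\mathcal G_0=\mathcal F_\tau$, so $H$ is $\mathcal G_0$-measurable. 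Since $H\ge0$ and $E[H]=1$, $P^{\tau,H}$ is a probability measure, and it suffices to show that the canonical process under $P^{\tau,H}$ solves the controlled martingale problem for $(A,E_0,B,\Xi)$ with respect to the (right continuous) filtration it generates.

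The structural conditions are inherited from $(Y,\lambda_0,\Lambda_1)$ by translating the time origin. The path $Y^\tau$ lies in $D_E[0,\infty)$ and $\lambda_0^\tau$ is nonnegative, nondecreasing and continuous because $\lambda_0$ is. Using $d\lambda_0^\tau(t)=d\lambda_0(\tau+t)$ and $\Lambda_1^\tau([0,t]\times C)=\Lambda_1([\tau,\tau+t]\times C)$ and substituting $s\mapsto\tau+s$,
\[
\int_0^\infty{\bf 1}_{(\bar{E}_0)^c}(Y^\tau(s))\,d\lambda_0^\tau(s)=\int_\tau^\infty{\bf 1}_{(\bar{E}_0)^c}(Y(s))\,d\lambda_0(s)=0,
\]
\[
\Lambda_1^\tau([0,t]\times U)=\int_{[0,t]\times U}{\bf 1}_\Xi(Y^\tau(s),u)\,\Lambda_1^\tau(ds\times du),
\]
so $\lambda_0^\tau$ increases only when $Y^\tau\in\bar{E}_0$ and (\ref{Lam1supp}) holds for $(Y^\tau,\Lambda_1^\tau)$; moreover $\lambda_0^\tau(t)+\lambda_1^\tau(t)=(\lambda_0(\tau+t)-\lambda_0(\tau))+(\lambda_1(\tau+t)-\lambda_1(\tau))=(\tau+t)-\tau=t$. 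All of these hold $P$-a.s.\ and hence are unaffected by reweighting the law by $H$.

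For the martingale property, fix $f\in\mathcal D$ and put
\[
M_f(t)=f(Y(t))-f(Y(0))-\int_0^tAf(Y(s))\,d\lambda_0(s)-\int_{[0,t]\times U}Bf(Y(s),u)\,\Lambda_1(ds\times du),
\]
an $\{\mathcal F_t\}$-martingale with $|M_f(t)|\le C_f(1+t)$, since $f$, $Af$, $Bf$ are bounded ($E$ and $\Xi$ being compact) and $\lambda_0,\lambda_1$ are $1$-Lipschitz. From (\ref{shift}) one checks directly that
\[
M_f(\tau+t)-M_f(\tau)=f(Y^\tau(t))-f(Y^\tau(0))-\int_0^tAf(Y^\tau(s))\,d\lambda_0^\tau(s)-\int_{[0,t]\times U}Bf(Y^\tau(s),u)\,\Lambda_1^\tau(ds\times du),
\]
i.e.\ this is exactly the process (\ref{mgp1}) evaluated at $(Y^\tau,\lambda_0^\tau,\Lambda_1^\tau)$. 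Applying optional sampling to $M_f$ at the bounded stopping times $(\tau\wedge n)+t_1\le(\tau\wedge n)+t_2$ and letting $n\to\infty$ shows that $M_f(\tau+\cdot)-M_f(\tau)$ is a $\{\mathcal G_t\}$-martingale under $P$; then, since $H$ is $\mathcal G_0$-measurable with $E[H]=1$, the computation in the proof of Lemma~\ref{abscon} transfers this martingale property to the measure with $P$-density $H$, and pushing forward under $(Y^\tau,\lambda_0^\tau,\Lambda_1^\tau)$ yields that the canonical $M_f$ is a martingale under $P^{\tau,H}$ for the generated filtration. Combined with the structural conditions, this gives $P^{\tau,H}\in\Pi$.

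The one point requiring care is the passage to the limit in the optional-sampling/change-of-measure step: $M_f$ is bounded only by $C_f(1+t)$, not uniformly, so when $\tau$ is unbounded the truncations $M_f((\tau\wedge n)+t)$ must be controlled — under an integrability condition such as $E[H\tau]<\infty$, which is automatic when $\tau$ is bounded — before one may interchange the limit with expectation against $H$ times a bounded $\mathcal G_{t_1}$-measurable test functional. This is the same linear-growth subtlety already handled in Lemma~\ref{abscon}; everything else is bookkeeping about shifting the time origin.
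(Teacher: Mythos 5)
Your argument is the same as the paper's: the paper's entire proof is the single optional-sampling computation showing that the expectation of the increment of $M_f$ over $(\tau+t,\tau+t+r]$, weighted by $H\,{\bf 1}_C(Y^{\tau},\lambda_0^{\tau},\Lambda_1^{\tau})$ for $C\in{\cal B}_t$, vanishes; your verification of the structural conditions and your identification of $M_f(\tau+\cdot)-M_f(\tau)$ with the shifted version of (\ref{mgp1}) are the same bookkeeping made explicit.

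The one place you go astray is the closing caveat: no hypothesis such as $E[H\tau]<\infty$ is needed. The quantity to which optional sampling and dominated convergence are applied is not $M_f(\tau+t)$ itself but the increment $M_f(\tau+t+r)-M_f(\tau+t)$, and this increment is uniformly bounded by $2\|f\|+(\|Af\|+\|Bf\|)\,r$: the two integral terms over $(\tau+t,\tau+t+r]$ are bounded by $\|Af\|\bigl(\lambda_0(\tau+t+r)-\lambda_0(\tau+t)\bigr)\le\|Af\|\,r$ and $\|Bf\|\bigl(\lambda_1(\tau+t+r)-\lambda_1(\tau+t)\bigr)\le\|Bf\|\,r$, precisely because $\lambda_0$ and $\lambda_1$ are $1$-Lipschitz --- the linear growth of $M_f$ in $t$ is irrelevant once you work with increments over an interval of fixed length. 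Concretely, write $Z=H\,{\bf 1}_C(Y^{\tau},\lambda_0^{\tau},\Lambda_1^{\tau})$, which is ${\cal F}_{\tau+t}$-measurable with $E[|Z|]\le E[H]=1$; then $Z\,{\bf 1}_{\{\tau\le n\}}$ is ${\cal F}_{(\tau\wedge n)+t}$-measurable, optional sampling at the bounded stopping times $(\tau\wedge n)+t\le(\tau\wedge n)+t+r$ gives $E\bigl[(M_f(\tau+t+r)-M_f(\tau+t))\,Z\,{\bf 1}_{\{\tau\le n\}}\bigr]=0$, and dominated convergence (uniformly bounded increment against the integrable weight $Z$) lets $n\to\infty$ with no further assumption. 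So the lemma holds exactly as stated for an arbitrary finite stopping time $\tau$ and any $H\ge0$ with $E[H]=1$.
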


\begin{proof}
For  $0\leq t<t+r$ and $C\in {\cal B}_t$
\begin{eqnarray*}
&&E^{P^{\tau ,H}}\Big[\Big\{f(Y(t+r))-f(Y(t))-\int_t^{t+r}Af(Y(s)
)d\lambda_0(s)\\
&&\qquad -\int_{(t,t+r]\times U}Bf(Y(s),u)\Lambda_1(ds\times du)\Big
\}{\bf 1}_C(Y,\lambda_0,\Lambda_1)\Big]\\
&&\quad =E\Big[\Big\{f(Y^{\tau}(t+r))-f(Y^{\tau}(t))-\int_t^{t+r}
Af(Y^{\tau}(s))d\lambda^{\tau}_0(s)\\
&&\qquad -\int_{(t,t+r]\times U}Bf(Y^{\tau}(s),u)\Lambda^{\tau}_1
(ds\times du)\Big\}H{\bf 1}_C(Y^{\tau},\lambda_0^{\tau},\Lambda_1^{
\tau})\Big]\\
&&\quad =0\end{eqnarray*}
by the optional sampling theorem.  Therefore,  
$P^{\tau ,H}\in\Pi$.
\end{proof}

\begin{lemma}\label{paste}
Suppose that $(Y,\lambda_0,\Lambda_1)$ is a solution of the controlled 
martingale problem with filtration $\{{\cal F}_t\}$ and that $\tau$ is a 
finite $\{{\cal F}_t\}$-stopping time.  Let $P^0\in {\cal P}(D_E[
0,\infty )\times C_{[0,\infty )}[0,\infty )\times {\cal L}_U\times 
[0,\infty ))$ 
be the joint distribution of the 4-tuple of random 
variables $(Y,\lambda_0,\Lambda_1,\tau )$.  Let $\nu$ be the distribution of 
$Y(\tau )$, and let $P^1\in\Pi_{\nu}$ (not empty by Lemma \ref{ctrans}).  Then there exists 
$P\in {\cal P}(D_E[0,\infty )\times C_{[0,\infty )}[0,\infty )\times 
{\cal L}_U\times [0,\infty ))$ and a filtration 
$\{{\cal H}_t\}$ in $D_E[0,\infty )\times C_{[0,\infty )}[0,\infty 
)\times {\cal L}_U\times [0,\infty )$ 
such that, under $P$, $(Y,\lambda_0,\Lambda_1)$ is a solution of the 
controlled martingale problem with filtration $\{{\cal H}_t\}$, 
$\tau$ is a $\{{\cal H}_t\}$-stopping time, 
$(Y(\cdot\wedge\tau ),\lambda_0(\cdot\wedge\tau ),\Lambda_1(\cdot
\wedge\tau ,\cdot ),\tau )$  has the same distribution 
under $P^0$ and $P$ and the distribution of $(Y^{\tau},\lambda_0^{
\tau},\Lambda_1^{\tau})$ under 
$P$ is $P^1$.
\end{lemma}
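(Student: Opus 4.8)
The plan is to construct the pasted measure $P$ by a conditioning/regular-conditional-probability argument, gluing a solution up to time $\tau$ to a fresh solution started from $Y(\tau)$. First I would work on the canonical space $\Omega=D_E[0,\infty)\times C_{[0,\infty)}[0,\infty)\times{\cal L}_U\times[0,\infty)$, where the last coordinate records the (deterministic, once we condition) value of $\tau$. Using Lemma \ref{ctrans} with $H=1$, note that the shifted triple $(Y^\tau,\lambda_0^\tau,\Lambda_1^\tau)$ is itself a solution, whose time-zero marginal is $\nu={\cal L}(Y(\tau))$; since $P^1\in\Pi_\nu$ has the same $\nu$-marginal, it is a legitimate replacement. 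The measure $P$ is then defined informally by: run $P^0$ up to time $\tau$, and from $\tau$ onward run an independent copy governed by $P^1$ conditioned to start at the observed endpoint $Y(\tau)$. Concretely, I would disintegrate $P^1$ over the initial point, getting a measurable kernel $x\mapsto P^1_x$ with $P^1_x\in\Pi_{\delta_x}$ (this uses that $\Pi_\nu$ for $\nu$ a point mass is nonempty on a large enough set, together with a measurable selection; one can also just use a regular conditional distribution of $P^1$ given the time-zero coordinate, which automatically lives in $\Pi_{\delta_x}$ for $\nu$-a.e.\ $x$ by the strong-Markov-type closure in Lemma \ref{ctrans}). Then set, for paths $\omega$, the concatenation $\omega\oplus_{\tau(\omega)}\omega'$ in the obvious way ($\lambda_0$ and $\Lambda_1$ add their post-$\tau$ increments, $Y$ switches to $\omega'$ at $\tau$, using $Y(\tau)=\omega'(0)$ so there is no jump forced at the splice), and let $P$ be the image of $P^0(d\omega)\otimes P^1_{Y(\tau)(\omega)}(d\omega')$ under this concatenation map, together with the same $\tau$-coordinate.

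The filtration $\{{\cal H}_t\}$ on $\Omega$ should be the one generated by the canonical process $(Y,\lambda_0,\Lambda_1)$ together with the event $\{\tau\le s\}$ for $s\le t$ — equivalently $\{{\cal B}_t\vee\sigma(\tau\wedge t)\}$, made right continuous. With this choice $\tau$ is an $\{{\cal H}_t\}$-stopping time by construction, and the three stated distributional identities are immediate from the definition of the concatenation: the stopped process $(Y(\cdot\wedge\tau),\lambda_0(\cdot\wedge\tau),\Lambda_1(\cdot\wedge\tau,\cdot),\tau)$ depends only on the $\omega$-coordinate and so has law $P^0$ pushed forward by stopping (which is exactly its law under the original $P^0$), while the shifted process $(Y^\tau,\lambda_0^\tau,\Lambda_1^\tau)$ equals $\omega'$ (reindexed and with increments relabelled), whose law is $\int P^1_x\,\nu(dx)=P^1$. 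The constraints $\lambda_0$ nondecreasing and increasing only on $\{Y\in\bar E_0\}$, the support condition \eqref{Lam1supp} for $\Lambda_1$, and $\lambda_0(t)+\lambda_1(t)=t$ are all preserved under concatenation because each piece satisfies them and they are "local in time" (the identity $\lambda_0(t)+\lambda_1(t)=t$ for $t\ge\tau$ follows by adding the pre-$\tau$ identity evaluated at $\tau$ to the post-$\tau$ identity of the second piece).

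The real content is the martingale property of \eqref{mgp1} under $P$ with respect to $\{{\cal H}_t\}$. For times $t<t+r\le\tau$ it holds because on that event $P$ agrees with $P^0$ and the optional-sampling computation of Lemma \ref{ctrans} applies. For $t\ge\tau$ one conditions on ${\cal H}_\tau$: given the pre-$\tau$ information and the value $x=Y(\tau)$, the post-$\tau$ process is distributed as $P^1_x$, under which $f(Y(t))-f(Y(0))-\int_0^t Af\,d\lambda_0-\int Bf\,d\Lambda_1$ is a martingale in its own filtration starting from $f(x)$; hence the increment over $[t,t+r]$ has zero conditional expectation. The step straddling $\tau$, i.e.\ $t<\tau\le t+r$, is handled by writing the increment over $[t,t+r]$ as the sum of the increment over $[t,\tau]$ (a $P^0$-martingale increment, zero conditional expectation by optional sampling) and the increment over $[\tau,t+r]$ (zero conditional expectation by the $P^1_x$ argument just described, after further conditioning on $\tau$), using the continuity of $f$ so that $f(Y(\tau))$ is the common endpoint and the two integrals split additively at $\tau$. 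The main obstacle is the measurability bookkeeping: producing the kernel $x\mapsto P^1_x$ measurably (or, equivalently, checking that the regular conditional distribution of $P^1$ given its initial coordinate is concentrated on $\Pi_{\delta_x}$ for $\nu$-a.e.\ $x$), and verifying that the concatenation map is Borel measurable from $\Omega\times\Omega$ to $\Omega$ so that $P$ is well defined and the tower-property computation above is rigorous rather than formal. Both are standard but must be done carefully; everything else is routine.
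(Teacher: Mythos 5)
Your construction is exactly the paper's: the paper obtains the coupling $P^0(d\omega)\otimes P^1_{Y(\tau)(\omega)}(d\omega')$ by citing Lemma 4.5.15 of \cite{EK86} (which is proved via the regular conditional distribution of $P^1$ given its initial coordinate, precisely your kernel $x\mapsto P^1_x$), concatenates the paths at $\tau$ in the same way, and delegates the martingale verification to the argument of Lemma 4.5.16 of \cite{EK86}, which is the same three-case splitting at $\tau$ that you sketch. The measurability points you flag are exactly what those two cited lemmas take care of, so your proposal is a correct, expanded version of the same proof.
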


\begin{proof}
Let
\[\Omega =D_E[0,\infty )\times C_{[0,\infty )}[0,\infty )\times {\cal L}_
U\times [0,\infty )\times D_E[0,\infty )\times C_{[0,\infty )}[0,
\infty )\times {\cal L}_U,\]
and denote the elements by $(Y^0,\lambda_0^0,\Lambda_1^0,\tau^0,Y^
1,\lambda_0^1,\Lambda_1^1)$.  Apply 
Lemma 4.5.15 of  \cite{EK86}  to $P^0$ and $P^1$ to obtain $P$ 
on $\Omega$ such that $Y^0(\tau )=Y^1(0)$  and define 
\begin{eqnarray*}
Y(t)&=&\left\{\begin{array}{lc}
Y^0(t),&\quad t<\tau^0\\
Y^1(t-\tau^0),&\quad t\geq\tau^0\end{array}
\right.\\
\lambda_0(t)&=&\left\{\begin{array}{lc}
\lambda^0_0(t),&\quad t<\tau^0\\
\lambda_0^0(\tau^0)+\lambda^1_0(t-\tau^0),&\quad t\geq\tau^0\end{array}
\right.\\
\Lambda_1([0,t]\times C)&=&\left\{\begin{array}{lc}
\Lambda_1^0([0,t]\times C),&\quad t<\tau^0\\
\Lambda_1^0([0,\tau^0]\times C)+\Lambda^1_1([0,t-\tau^0]\times C)
,&\quad t\geq\tau^0.\end{array}
\right.\end{eqnarray*}
The fact that $(Y,\lambda_0,\Lambda_1)$ is a solution of the controlled 
martingale problem follows as in the proof of Lemma 
4.5.16 of \cite{EK86}.
\end{proof}

\setcounter{equation}{0}

\section{Constrained martingale problems}\label{CMP}

As discussed in the Introduction and at the beginning of 
Section \ref{ClMP}, we are interested in processes that in $E_0$ behave like 
solutions of the martingale problem for the operator $A$, 
are constrained to remain in $\bar {E}_0$, 
and whose behavior on $\partial E_0$ is determined by the 
operator $B$. In Section \ref{ClMP}, we have introduced a 
controlled process $Y$ with values in all of $E$, that 
evolves on a slower time scale and whose behavior in 
$E_0^c$ is determined by $B$. $Y$ is the first element of a triple 
$(Y,\lambda_0,\Lambda_1)$ that is a solution of the controlled martingale 
problem (Definition \ref{clmp}). 
We now construct the constrained process, $X,$  
by time changing $Y$, where the time change is obtained by 
inverting $\lambda_0$. 
The following lemma gives conditions that ensure that 
the process obtained by inverting $\lambda_0$ is defined for all 
time. 

\begin{lemma}\label{invert}
Let $(Y,\lambda_0,\Lambda_1)$ be a solution of the controlled martingale 
problem for $(A,E_0,B,\Xi )$, and define 
\begin{equation}\tau (t)=\inf\{s:\lambda_0(s)>t\},\qquad t\geq 0.\label{tau}\end{equation}
Suppose there is an $f\in {\cal D}$ and $\epsilon >0$ such that
\begin{equation}\int_{[0,t]\times U}Bf(Y(s),u)\Lambda_1(ds,du)\geq
\epsilon\lambda_1(t).\label{invertf}\end{equation}
Then $\lim_{t\rightarrow\infty}\lambda_0(t)=\infty$ almost surely and $
E[\tau (t)]<\infty$, for all $t\ge 0$.
\end{lemma}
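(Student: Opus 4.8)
The plan is to extract a supermartingale (up to a drift bound) from the martingale in \eqref{mgp1} applied to the function $f$ in \eqref{invertf}, and then use optional sampling at $\tau(t)$ to control $E[\lambda_1(\tau(t))]$, hence $E[\tau(t)]$. First I would fix $f\in{\cal D}$ and $\epsilon>0$ as in \eqref{invertf}. Since $E$ is compact and $f,Af\in C(E)$, $Bf\in C(\Xi)$, the quantities $\|f\|$, $\|Af\|$, $\|Bf\|$ are all finite. Writing $M_f$ for the $\{{\cal F}_t\}$-martingale
\[
M_f(t)=f(Y(t))-f(Y(0))-\int_0^tAf(Y(s))\,d\lambda_0(s)-\int_{[0,t]\times U}Bf(Y(s),u)\,\Lambda_1(ds\times du),
\]
we may rearrange to get, using \eqref{invertf},
\[
\epsilon\,\lambda_1(t)\le\int_{[0,t]\times U}Bf(Y(s),u)\,\Lambda_1(ds\times du)=f(Y(t))-f(Y(0))-\int_0^tAf(Y(s))\,d\lambda_0(s)-M_f(t).
\]
Bounding $|f(Y(t))-f(Y(0))|\le 2\|f\|$ and $\bigl|\int_0^tAf(Y(s))\,d\lambda_0(s)\bigr|\le\|Af\|\,\lambda_0(t)\le\|Af\|\,t$ (the last step since $\lambda_0(t)\le\lambda_0(t)+\lambda_1(t)=t$), we obtain
\[
\epsilon\,\lambda_1(t)\le 2\|f\|+\|Af\|\,t-M_f(t).
\]

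Next I would apply the optional sampling theorem. Fix $t\ge 0$ and let $\tau(t)$ be as in \eqref{tau}; $\tau(t)$ is an $\{{\cal F}_s\}$-stopping time (by right continuity of the filtration, which Definition \ref{clmp} permits us to assume, and right continuity of $\lambda_0$). For any fixed deterministic $T>0$, the bounded stopping time $\tau(t)\wedge T$ gives, since $M_f$ is a martingale with $E[M_f(\tau(t)\wedge T)]=E[M_f(0)]=0$,
\[
\epsilon\,E[\lambda_1(\tau(t)\wedge T)]\le 2\|f\|+\|Af\|\,E[\tau(t)\wedge T].
\]
Now on $\{\tau(t)<\infty\}$ we have by definition of $\tau(t)$ that $\lambda_0(\tau(t))\le t$ (using right continuity of $\lambda_0$), so that $\lambda_1(\tau(t))=\tau(t)-\lambda_0(\tau(t))\ge\tau(t)-t$; more generally $\lambda_1(\tau(t)\wedge T)\ge \tau(t)\wedge T-\lambda_0(\tau(t)\wedge T)\ge(\tau(t)\wedge T)-t$. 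Substituting,
\[
\epsilon\,E[\tau(t)\wedge T]-\epsilon t\le 2\|f\|+\|Af\|\,E[\tau(t)\wedge T],
\]
and, choosing nothing further, rearranging yields $(\epsilon-\|Af\|)\,E[\tau(t)\wedge T]\le 2\|f\|+\epsilon t$.

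Here is the one genuine subtlety: a priori $\|Af\|$ need not be smaller than $\epsilon$, so the last inequality is not yet useful. I expect this to be the main obstacle, and the standard remedy is to rescale time by replacing $f$ by $cf$ for a suitably small constant $c>0$: note \eqref{invertf} holds for $cf$ with $\epsilon$ replaced by $c\epsilon$, while $\|A(cf)\|=c\|Af\|$, and the ratio $c\epsilon$ versus $c\|Af\|$ is unchanged under scaling — so scaling alone does not help. The correct fix is instead to absorb the drift \emph{before} invoking \eqref{invertf}: rather than bounding $\int Af\,d\lambda_0$ crudely by $\|Af\|\,t$, bound it by $\|Af\|\,\lambda_0(t)=\|Af\|\,(t-\lambda_1(t))$, which gives
\[
\epsilon\,\lambda_1(t)\le 2\|f\|+\|Af\|\,(\tau(t)\wedge T)-\|Af\|\,\lambda_1(\tau(t)\wedge T)-M_f(\tau(t)\wedge T)
\]
after sampling, i.e. $(\epsilon+\|Af\|)E[\lambda_1(\tau(t)\wedge T)]\le 2\|f\|+\|Af\|\,E[\tau(t)\wedge T]$; combined with $\lambda_1(\tau(t)\wedge T)\ge(\tau(t)\wedge T)-t$ this gives $(\epsilon+\|Af\|)E[\tau(t)\wedge T]-(\epsilon+\|Af\|)t\le 2\|f\|+\|Af\|E[\tau(t)\wedge T]$, hence
\[
\epsilon\,E[\tau(t)\wedge T]\le 2\|f\|+(\epsilon+\|Af\|)\,t,
\]
a bound uniform in $T$. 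Letting $T\to\infty$ and applying the monotone convergence theorem gives $E[\tau(t)]\le\epsilon^{-1}\bigl(2\|f\|+(\epsilon+\|Af\|)t\bigr)<\infty$ for every $t\ge 0$. In particular $\tau(t)<\infty$ a.s. for all $t$, which by the definition \eqref{tau} is exactly the statement $\lambda_0(s)\to\infty$ as $s\to\infty$ a.s. (if $\lambda_0$ stayed bounded by some $t_0$ on an event of positive probability, then $\tau(t)=\infty$ there for all $t>t_0$). This completes the proof.
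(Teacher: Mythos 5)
Your argument is correct: the paper's own ``proof'' is simply a citation to Lemma 2.9 of \cite{Kur91}, and your optional-sampling argument --- rearranging the martingale $M_f$, using \eqref{invertf}, and absorbing the drift term via $\lambda_0(s)=s-\lambda_1(s)$ so that the coefficient multiplying $E[\tau(t)\wedge T]$ on the left exceeds $\|Af\|$ by $\epsilon$ --- is precisely the standard argument behind that cited lemma. (One typo: in the display following ``after sampling'' the left-hand side should read $\epsilon\,\lambda_1(\tau(t)\wedge T)$ rather than $\epsilon\,\lambda_1(t)$, as your subsequent line correctly uses.)
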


\begin{proof}
See Lemma 2.9 of \cite{Kur91}.
\end{proof}

\begin{remark}
$(\ref{invertf})$ is a natural condition which is also used in 
the study of PDEs (see, e.g. \cite{CIL92}, Lemma 7.6). 
An example where it is satisfied is a reflecting diffusion in a smooth domain with a 
nontangential direction of reflection. 
More precisely, let $E_0\equiv \{x:\psi (x)>0\}$ for some 
function $\psi\in C^2(\mathbb R^d)$ such that 
$\psi (x)=0$ implies $\nabla\psi (x)\neq 0$, so that, in particular, the unit 
inward normal at $x\in\partial E_0$ is given by $n(x)\equiv\frac {
\nabla\psi (x)}{|\nabla\psi (x)|}$. Let 
$g:\R^d\rightarrow\R^d$ be a continuous vector field, of unit length on 
$\partial E_0$, such that $\langle g(x),n(x)\rangle >0$ at every $
x\in\partial E_0$. 
Consider the controlled martingale problem 
for $(A,E_0,B,\Xi )$, where 
\begin{eqnarray*}
&Af(x)\equiv\langle\nabla f(x),b(x)\rangle +\frac 12\mbox{\rm tr}\big
(\sigma (x)\sigma^T(x)D^2f(x)\big),\\
&U\equiv \{u\in\R^d:\,|u|=1\},\quad\Xi\equiv \{(x,u):\,x\in\partial 
E_0,\,u=g(x)\},\\
&Bf(x,u)\equiv\langle\nabla f(x),u\rangle ,
\end{eqnarray*}
and ${\cal D}\equiv C^2_c(\mathbb R^d)$. Then $\psi$ itself satisfies 
$(\ref{invertf})$ (recall that $\partial E_0$ is compact). 
\end{remark}

\begin{lemma}\label{laminf}
Under the assumptions of Lemma \ref{picompact}, if, for 
each $P\in\Pi$, $P\{\tau (0)<\infty \}=1$, then for each $P\in\Pi$, 
$\lim_{t\rightarrow\infty}\lambda_0(t)=\infty$ a.s..
\end{lemma}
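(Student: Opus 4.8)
The goal is to upgrade the hypothesis ``$\tau(0)<\infty$ a.s. for every $P\in\Pi$'' (which says the time-change process starts running immediately) to the conclusion that $\lambda_0(t)\to\infty$ a.s. (which says the time-change never ``gets stuck'' at a finite level). The natural strategy is a proof by contradiction combined with the shift/paste machinery already established (Lemmas \ref{ctrans} and \ref{paste}). Suppose $P\in\Pi$ and $P\{\lambda_0(\infty)<\infty\}>0$. Since $\lambda_0$ is nondecreasing, on this event there is a finite limit $L=\lambda_0(\infty)$, and for the (random, finite) time $\tau(L-)=\inf\{s:\lambda_0(s)\ge L\}$ — or more conveniently for a deterministic level just below a suitable constant — one has $\lambda_0(s)$ essentially constant for all large $s$. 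Thus after an a.s.\ finite stopping time $\tau$, the shifted process $\lambda_0^\tau$ is identically $0$ (or $\le\varepsilon$ with positive probability for every $\varepsilon$), which by $\lambda_0^\tau(t)+\lambda_1^\tau(t)=t$ forces $\tau^\tau(0)=\infty$ on a set of positive probability.

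Concretely, I would argue as follows. First, pick a rational level $\ell$ such that $P\{\lambda_0(\infty)<\ell\}>0$ — possible since $\{\lambda_0(\infty)<\infty\}=\bigcup_{\ell\in\mathbb Q}\{\lambda_0(\infty)<\ell\}$. Define the $\{\mathcal F_t\}$-stopping time $\tau=\inf\{s:\lambda_0(s)\ge\ell\}\wedge$ (something ensuring finiteness); on the event $\{\lambda_0(\infty)<\ell\}$ we have $\tau=\infty$, so instead I work on $\{\lambda_0(\infty)<\ell\}$ directly. On that event, $\sup_s\lambda_0(s)<\ell$, hence for each $s$, $\tau^s(0)=\inf\{r:\lambda_0(s+r)>0+\lambda_0(s)\}$... more simply: since $\lambda_0(\infty)<\infty$, there is a finite $\sigma$ with $\lambda_0(t)=\lambda_0(\sigma)$ for all $t\ge\sigma$; taking $\tau=\tau(L-)$ with $L=\lambda_0(\infty)$ gives a finite stopping time after which $\lambda_0^\tau\equiv 0$, so $\tau^\tau(0)=\inf\{s:\lambda_0^\tau(s)>0\}=\infty$ on a positive-probability set. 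Then apply Lemma \ref{ctrans} (or Lemma \ref{paste} with this $\tau$) to conclude that the law of $(Y^\tau,\lambda_0^\tau,\Lambda_1^\tau)$ conditioned appropriately lies in $\Pi$ (more precisely, that $P^{\tau,H}\in\Pi$ for a suitable normalized $\mathcal F_\tau$-measurable $H\ge0$ supported on the bad event). For this $P'\in\Pi$ we would then have $P'\{\tau(0)=\infty\}>0$, contradicting the hypothesis that $P''\{\tau(0)<\infty\}=1$ for every $P''\in\Pi$.

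The main technical care needed — and the step I expect to be the chief obstacle — is the measurability and finiteness of the stopping time $\tau$ and the construction of the conditioning density $H$. One must choose $\tau$ so that it is a genuine $\{\mathcal F_t\}$-stopping time and a.s.\ finite (e.g.\ $\tau=\tau(\ell)\wedge n$ for suitable rational $\ell$ and integer $n$, intersected with behavior on the bad event), so that Lemma \ref{ctrans} applies literally; then set $H=c^{-1}\mathbf 1_{G}$ where $G\in\mathcal F_\tau$ is a positive-probability subevent of $\{\lambda_0(\infty)<\ell\}$ on which $\lambda_0$ has already (essentially) stopped increasing past time $\tau$, and $c=P(G)$. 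Checking that on $G$ the shifted $\lambda_0^\tau$ stays below some fixed $\varepsilon$ for all time, and hence $\tau^\tau(0)\ge$ (a positive amount) or indeed $=\infty$, is where one uses that $\lambda_0$ is nondecreasing with finite limit; this is a soft monotonicity argument rather than a computation.

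Finally, one packages the contradiction: the measure $P^{\tau,H}$ produced by Lemma \ref{ctrans} is in $\Pi$, yet under it $P^{\tau,H}\{\tau(0)=\infty\}=1$ (or $>0$), which is incompatible with the standing assumption. Hence $P\{\lambda_0(\infty)<\infty\}=0$, i.e.\ $\lambda_0(t)\to\infty$ a.s., for every $P\in\Pi$. I would also remark that this argument only uses the shift-invariance and the normalization $\lambda_0+\lambda_1=t$, so it is robust; the assumptions of Lemma \ref{picompact} enter only insofar as they guarantee $\Pi\ne\emptyset$ and the closure properties on which Lemmas \ref{ctrans}–\ref{paste} rest.
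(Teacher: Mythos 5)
Your strategy (shift to late times, where $\lambda_0$ has essentially stopped growing, and contradict $\tau(0)<\infty$) is the right intuition, but the execution has a genuine gap at exactly the point you flagged as the "chief obstacle," and the gap is not merely technical. The claim "since $\lambda_0(\infty)<\infty$, there is a finite $\sigma$ with $\lambda_0(t)=\lambda_0(\sigma)$ for all $t\ge\sigma$" is false: a bounded nondecreasing function need not attain its supremum (think $\lambda_0(t)=L(1-e^{-t})$), so there need be no finite time after which $\lambda_0^\tau\equiv 0$. After any finite shift the shifted solution may still satisfy $\tau(0)<\infty$ (indeed $\tau(0)=0$), just with tiny total increase, and the hypothesis gives no contradiction at any finite stage. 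Relatedly, your $\tau=\tau(L-)$ with $L=\lambda_0(\infty)$ is not an $\{\mathcal F_t\}$-stopping time and the event $\{\lambda_0(\infty)<\ell\}$ is not in $\mathcal F_\tau$ for a finite stopping time $\tau$, so Lemma \ref{ctrans} (whose proof rests on optional sampling with $H$ being $\mathcal F_\tau$-measurable) does not apply to the shift and conditioning you propose.

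The missing idea is precisely the compactness of $\Pi$ from Lemma \ref{picompact}, which you explicitly dismiss as entering "only insofar as" it guarantees nonemptiness and closure. The paper's proof shifts by \emph{deterministic} times $t_n\to\infty$ (so Lemma \ref{ctrans} applies with $H=1$), uses compactness of $\Pi$ to extract a weakly convergent subsequence $(Y^{t_n},\lambda_0^{t_n},\Lambda_1^{t_n})\Rightarrow(Y^{\infty},\lambda_0^{\infty},\Lambda_1^{\infty})$ whose limit law is again in $\Pi$, and observes that on $\{\lim_t\lambda_0(t)<\infty\}$ the increments $\lambda_0(t_n+t)-\lambda_0(t_n)$ tend to $0$, so $P\{\lim_t\lambda_0(t)<\infty\}\le P\{\sup_t\lambda_0^{\infty}(t)=0\}$, and the right side is $0$ because $P\{\tau^{\infty}(0)<\infty\}=1$ for the limit solution. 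The passage to the weak limit is what converts "eventually increases arbitrarily slowly" into "a genuine solution whose $\lambda_0$ is identically zero with positive probability"; no finite random shift can do this, so the compactness step cannot be avoided.
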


\begin{proof}
Let $(Y,\lambda_0,\Lambda_1)$ have distribution in $\Pi$.
Then by Lemma \ref{ctrans} and the 
compactness of $\Pi$ (Lemma \ref{picompact}), there exists $t_n\rightarrow
\infty$ such that 
$(Y^{t_n},\lambda_0^{t_n},\Lambda_1^{t_n})\Rightarrow (Y^{\infty}
,\lambda_0^{\infty},\Lambda_1^{\infty})$.  But 
\[P\{\lim_{t\rightarrow\infty}\lambda_0(t)<\infty \}\leq P\{\lim_{
n\rightarrow\infty}\lim_{t\rightarrow\infty}\lambda_0(t_n+t)-\lambda_
0(t_n)=0\}\leq P\{\sup_t\lambda_0^{\infty}(t)=0\}.\]
Since by assumption, $P\{\tau^{\infty}(0)<\infty \}=1$, 
$P\{\sup_t\lambda_0^{\infty}(t)=0\}=0$.
\end{proof}

\begin{lemma}\label{increasing} 
Suppose every solution of the controlled martingale 
problem for $(A,E_0,B,\Xi )$ satisfies $\lambda_0(t)>0$ for all $
t>0$, 
a.s. (i.e. $P\{\tau (0)=0\}=1$ for each $P\in\Pi$). 
Then, for every solution, $\lambda_0$ is a.s.  strictly 
increasing. 
\end{lemma}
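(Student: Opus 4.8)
The plan is to argue by contradiction: a failure of strict monotonicity of $\lambda_0$ on some interval will be converted, via the shift invariance of $\Pi$, into a solution that violates the standing hypothesis $P\{\tau(0)=0\}=1$.

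First I would record the elementary observation that $\lambda_0$ is continuous and nondecreasing (indeed $1$-Lipschitz, as noted in the proof of Lemma \ref{picompact}), so a path $\lambda_0$ fails to be strictly increasing precisely when it is constant on some nondegenerate interval, which in turn happens precisely when $\lambda_0(q_1)=\lambda_0(q_2)$ for some rationals $0\le q_1<q_2$. Hence
\[\{\lambda_0\text{ is not strictly increasing}\}=\bigcup_{0\le q_1<q_2,\ q_1,q_2\in\mathbb Q}\{\lambda_0(q_1)=\lambda_0(q_2)\},\]
a countable union of measurable events (measurability is immediate since $\lambda_0$ is a continuous process).

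Now suppose the conclusion fails, so there is a solution $(Y,\lambda_0,\Lambda_1)$ with distribution $P$ such that $P\{\lambda_0\text{ is not strictly increasing}\}>0$. By the display above there are rationals $0\le q_1<q_2$ with $p:=P\{\lambda_0(q_1)=\lambda_0(q_2)\}>0$. Apply Lemma \ref{ctrans} with the finite (deterministic) stopping time $\tau=q_1$ and $H\equiv 1$: the shifted triple $(Y^{q_1},\lambda_0^{q_1},\Lambda_1^{q_1})$ is again a solution of the controlled martingale problem, say with distribution $\widetilde P\in\Pi$. On the event $\{\lambda_0(q_1)=\lambda_0(q_2)\}$, monotonicity gives $\lambda_0^{q_1}(t)=\lambda_0(q_1+t)-\lambda_0(q_1)=0$ for all $t\in[0,q_2-q_1]$, so for this shifted path $\tau(0)=\inf\{s:\lambda_0^{q_1}(s)>0\}\ge q_2-q_1>0$. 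Pushing forward under the shift map, this says $\widetilde P\{\tau(0)>0\}\ge p>0$, i.e. $\widetilde P\{\tau(0)=0\}<1$, contradicting the hypothesis that $P\{\tau(0)=0\}=1$ for every $P\in\Pi$; this contradiction proves the lemma.

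There is no serious analytic difficulty here. The only point that needs care is the reduction to shifting at deterministic rational times, so that the shift invariance of $\Pi$ (Lemma \ref{ctrans}) can be invoked with $H\equiv 1$; trying instead to realize ``the first time $\lambda_0$ stops growing'' as a genuine stopping time would also work but requires an extra argument about the behaviour of the path immediately after that time. One should also make sure the event $\{\lambda_0(q_1)=\lambda_0(q_2)\}$ is exactly the pullback, under the shift map, of $\{\tau(0)>0\}$ (up to a null set), which is the computation carried out above.
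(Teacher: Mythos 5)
Your proof is correct and uses exactly the mechanism the paper intends: the paper's (one-line) proof simply notes that for each deterministic $s>0$ the shifted triple $(Y^s,\lambda_0^s,\Lambda_1^s)$ is again a solution, so the hypothesis forces $\lambda_0(s+t)-\lambda_0(s)>0$ for all $t>0$ a.s., and a countable union over rational $s$ gives strict monotonicity. Your contradiction formulation with rational pairs $q_1<q_2$ is just a repackaging of the same deterministic-shift argument via Lemma \ref{ctrans}.
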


\begin{proof}
For each $s>0$, for every solution $(Y,\lambda_0,\Lambda_1)$ of the 
controlled martingale problem for $(A,E_0,B,\Xi )$, with the 
notation of Lemma \ref{ctrans}
$(Y^s,\lambda_0^s,\Lambda_1^s)$ is also a solution. 
\end{proof}

With Lemmas \ref{picompact}, \ref{invert}, 
\ref{increasing} and \ref{laminf} in mind, 
throughout the remainder of the paper, we assume the 
following:  

\begin{condition}\label{cond1}
\item[a)]${\cal D}$ is dense in $C(E)$.

\item[b)]For each $\nu\in {\cal P}(\bar {E}_0)$, $\Pi_{\nu}\neq\emptyset$ (hence $
F_2\supset\bar {E}_0$, where 
$F_2$ is defined in Lemma \ref{clin}).

\item[c)]For each solution $(Y,\lambda_0,\Lambda_1)$ of the controlled martingale 
problem for $(A,E_0,B,\Xi )$, \break 
$\lim_{t\rightarrow\infty}\lambda_0
(t)=\infty$ almost surely.
\end{condition}

\begin{theorem}\label{const} 
Let $(Y,\lambda_0,\Lambda_1)$ be a solution 
of the controlled martingale problem for $(A,E_0,B,\Xi )$ with 
right continuous filtration $\{{\cal F}_t\}$. Let
$\tau (t)$ be given by $(\ref{tau})$, and define ${\cal G}_t={\cal F}_{
\tau (t)}$.  Define 
\[X(t)\equiv Y(\tau (t))\]
and 
\[\Lambda ([0,t]\times C)\equiv\int_{[0,\tau (t)]\times U}{\bf 1}_
C(Y(s),u)\Lambda_1(ds\times du),\quad C\in {\cal B}(\Xi ).\]
Suppose there exists a sequence $\eta_n$ of $\{{\cal G}_t\}$-stopping 
times such that $\eta_n\rightarrow\infty$ and, for each $n$, $E[\tau 
(\eta_n)]<\infty$.

Then $X\in D_{\bar {E}_0}[0,\infty )$, and, for each $f\in {\cal D}$,
\begin{equation}f(X(t))-f(X(0))-\int_0^tAf(X(s))ds-\int_{[0,t]\times
\Xi}Bf(x,u)\Lambda (ds\times dx\times du)\label{mgp2}\end{equation}
is a $\{{\cal G}_t\}$-local martingale.
\end{theorem}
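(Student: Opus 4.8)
The plan is to obtain the decomposition (\ref{mgp2}) as the image of (\ref{mgp1}) under the random time change $t\mapsto\tau(t)$. First I would record the elementary properties of $\tau$: since $\lambda_0$ is continuous, nondecreasing, and (by Condition \ref{cond1}(c)) tends to $\infty$, the right-continuous inverse $\tau$ defined in (\ref{tau}) is finite for every $t$, nondecreasing, right-continuous, and satisfies $\lambda_0(\tau(t))=t$. Consequently each $\tau(t)$ is an $\{{\cal F}_t\}$-stopping time, so $\{{\cal G}_t\}=\{{\cal F}_{\tau(t)}\}$ is a well-defined right-continuous filtration, and $X(t)=Y(\tau(t))$ and $\Lambda$ are $\{{\cal G}_t\}$-adapted. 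That $X$ takes values in $\bar E_0$ requires an argument: by Remark \ref{stsp}, $Y$ lives in $\bar E_0\cup F_1$, and I would check that at a time $\tau(t)$ the process $Y$ cannot sit in $F_1\setminus\bar E_0$ — roughly, because $\lambda_0$ does not increase there while $\tau(t)$ is, by definition, a point of increase of $\lambda_0$ from the right; handling the jump times of $\tau$ (where $Y$ skips over an interval on which $\lambda_0$ is constant) is the one place that needs care, and the claim is that at such times $X(t)=Y(\tau(t))$ equals the right endpoint value, which lies in $\bar E_0$. After passing to a modification if needed, $X\in D_{\bar E_0}[0,\infty)$.

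Next I would identify the two integral terms. For the drift term, the change-of-variables formula for Lebesgue--Stieltjes integrals under the time change gives
\[
\int_0^{\tau(t)}Af(Y(s))\,d\lambda_0(s)=\int_0^t Af(Y(\tau(r)))\,dr=\int_0^t Af(X(r))\,dr,
\]
using $\lambda_0(\tau(r))=r$ and that $\lambda_0$ increases only where $Y\in\bar E_0$, so the integrand is evaluated there. For the boundary term, by (\ref{Lam1supp}) $\Lambda_1$ is supported on $\Xi$, so pushing $\Lambda_1$ forward through $s\mapsto(s,Y(s),u)\mapsto$ (time-changed) $s$ yields exactly
\[
\int_{[0,\tau(t)]\times U}Bf(Y(s),u)\,\Lambda_1(ds\times du)=\int_{[0,t]\times\Xi}Bf(x,u)\,\Lambda(ds\times dx\times du),
\]
which is the definition of $\Lambda$ in the statement. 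Thus the process in (\ref{mgp2}) equals $M_f(\tau(t))$, where $M_f$ is the $\{{\cal F}_t\}$-martingale in (\ref{mgp1}).

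Finally I would invoke the optional-sampling/time-change theorem for martingales (e.g. the standard result that if $M$ is a right-continuous $\{{\cal F}_t\}$-martingale and $\tau(t)$ are stopping times with $\tau(t)\uparrow$, then $t\mapsto M(\tau(t)\wedge\eta_n)$ is a $\{{\cal F}_{\tau(t)}\}$-martingale whenever $\{M(\tau(t\wedge\cdot))\}$ is uniformly integrable on bounded intervals) to conclude that $M_f\circ\tau$ is a $\{{\cal G}_t\}$-local martingale. This is exactly where the hypothesis on the localizing sequence $\eta_n$ enters: on $[0,\eta_n]$ the stopped martingale $M_f(\tau(t)\wedge\tau(\eta_n))$ is controlled because $E[\tau(\eta_n)]<\infty$ bounds the total variation of the compensator (the drift and boundary terms have variation dominated by $\|Af\|\,\tau(\eta_n)+\|Bf\|\,\lambda_1(\tau(\eta_n))\le(\|Af\|+\|Bf\|)\tau(\eta_n)$), giving the uniform integrability needed to apply optional sampling at each $t$. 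The main obstacle is this last uniform-integrability bookkeeping together with the boundary-value subtlety at jump times of $\tau$; the change-of-variables identities themselves are routine.
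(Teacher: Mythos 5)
Your proposal is correct and follows essentially the same route as the paper: identify $X\in D_{\bar E_0}[0,\infty)$ via the fact that $\tau(t)$ is a point of increase of $\lambda_0$ (which increases only when $Y\in\bar E_0$), recognize (\ref{mgp2}) as the time change $M_f\circ\tau$ of the martingale (\ref{mgp1}), and use the bound $\bigl|\int_{[0,t\wedge\eta_n]\times\Xi}Bf\,d\Lambda\bigr|\le\|Bf\|\,\tau(t\wedge\eta_n)$ together with $E[\tau(\eta_n)]<\infty$ to justify optional sampling on each $[0,\eta_n]$. The paper's proof is just a terser version of the same argument, omitting the routine change-of-variables identities you spell out.
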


\begin{proof}
Since $\tau (t)$ must be a point of increase of $\lambda_0$, $Y(\tau 
(t))$ must 
be in $\bar {E}_0$.  Since $Y$ and $\tau$ are right continuous, $
X$ must be 
in $D_{\bar {E}_0}[0,\infty ).$

Since
\[\bigg|\int_{[0,t\wedge\eta_n]\times\Xi}Bf(x,u)\Lambda (ds\times 
dx\times du)\bigg|\leq\|Bf\|\lambda_1(\tau (t\wedge\eta_n))\leq\|
Bf\|\tau (t\wedge\eta_n),\]
(\ref{mgp2}) stopped at $\eta_n$ is a martingale.
\end{proof}

\begin{remark}\label{init}
If $\lambda_0(t)>0$ for all $t>0$, in particular if  $Y(0)\in E_0$, then 
$X(0)=Y(0)$, but if $\lambda_0(t)=0$ for some $t>0$, then 
$\tau (0)>0$, and $X(0)=Y(\tau (0))$ may not be $Y(0)$. 

Let ${\cal Q}_0$ be the collection of $\nu\in {\cal P}(\bar {E}_0
)$ such that 
$\nu ={\cal L}(X(0))={\cal L}(Y(\tau (0)))$, for some solution $(
Y,\lambda_0,\Lambda_1)$ of the 
controlled martingale problem, i.e. ${\cal Q}_0$ is the set of 
possible initial distributions of the process $X$ constructed 
in Theorem \ref{const}. Then, by Lemma 
\ref{ctrans}, ${\cal Q}_0$ is the collection of $\nu\in {\cal P}(
\bar {E}_0)$ 
such that there exists $(Y,\lambda_0,\Lambda_1)$ with initial distribution 
$\nu$ for which $\lambda_0(t)>0$ for all $t>0$ a.s.. Note that 
${\cal Q}_0\supset {\cal P}(E_0)$. 
\end{remark}

\begin{definition}\label{cmgpb}
A process $X$ in $D_{\bar {E}_0}[0,\infty )$ is a solution of the 
constrained (local) martingale problem for $(A,E_0,B,\Xi )$ if 
there exists a random measure $\Lambda$ in ${\cal L}_{\Xi}$ and a filtration 
$\{{\cal G}_t\}$ such that $X$ and $\Lambda$ are $\{{\cal G}_t\}$-adapted
and for each 
$f\in {\cal D}$,
(\ref{mgp2}) is a $\{{\cal G}_t\}$-(local) martingale. We may  
assume, without loss of generality, that $\{{\cal G}_t\}$ is right 
continuous. 

A solution obtained as in Theorem \ref{const} from a 
solution of the controlled martingale problem will be 
called {\em natural}.  $\Gamma\subset {\cal P}(D_{\bar {E}_0}[0,\infty 
))$ will denote the set of 
distributions of natural solutions and, for $\nu\in {\cal P}(\bar {
E}_0)$, 
$\Gamma_{\nu}$ will denote the set of distributions of 
natural solutions $X$ such that $X(0)$ has distribution $\nu$.  
\end{definition}

\begin{corollary}\label{const-exist}
\item[a)] For $\nu\in {\cal Q}_0$ $({\cal Q}_0$ defined in Remark 
\ref{init}), if there exists a solution $(Y,\lambda_0,\Lambda_1)$ of the 
controlled martingale problem for $(A,E_0,B,\Xi )$ with initial 
distribution $\nu$ that satisfies the conditions of 
Lemma \ref{invert}, then there exists a natural solution 
to the constrained martingale 
problem for $(A,E_0,B,\Xi )$ with initial distribution $\nu$.  

\item[b)] For $\nu\in {\cal P}(\bar {E}_0)$, if there exists a 
solution $(Y,\lambda_0,\Lambda_1)$ of the controlled 
martingale problem for $(A,E_0,B,\Xi )$ with initial 
distribution $\nu$ such that $\lambda_0$ is strictly increasing a.s. 
(see Lemma \ref{increasing} for a sufficient condition), then there exists a natural 
solution to the constrained local martingale 
problem for $(A,E_0,B,\Xi )$ with initial distribution $\nu$. 
\end{corollary}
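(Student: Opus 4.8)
The plan is to apply Theorem~\ref{const} directly, the only real work being to verify in each case the hypothesis that there exists a sequence $\eta_n$ of $\{{\cal G}_t\}$-stopping times with $\eta_n\to\infty$ and $E[\tau(\eta_n)]<\infty$. Throughout, $(Y,\lambda_0,\Lambda_1)$ is the given solution of the controlled martingale problem, $\tau$ is defined by $(\ref{tau})$, $X(t)=Y(\tau(t))$, and $\Lambda$ is as in Theorem~\ref{const}. Note that by Condition~\ref{cond1}(c), $\lambda_0(t)\to\infty$ a.s., so $\tau(t)<\infty$ a.s.\ for every $t$, and hence $X\in D_{\bar E_0}[0,\infty)$ is well defined; moreover $\tau$ is nondecreasing and right continuous, so the $\{{\cal G}_t\}=\{{\cal F}_{\tau(t)}\}$ are a valid right-continuous filtration and $\tau(t)$ is a $\{{\cal G}_t\}$-stopping time relative to $\{{\cal F}_t\}$ in the sense needed.

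For part (a), the assumption is that $(Y,\lambda_0,\Lambda_1)$ satisfies the conditions of Lemma~\ref{invert}, i.e.\ there exist $f\in{\cal D}$ and $\epsilon>0$ with $\int_{[0,t]\times U}Bf(Y(s),u)\Lambda_1(ds,du)\ge\epsilon\lambda_1(t)$. Lemma~\ref{invert} then yields $E[\tau(t)]<\infty$ for every fixed $t\ge 0$. So I would simply take the deterministic sequence $\eta_n\equiv n$; these are (trivially) $\{{\cal G}_t\}$-stopping times, $\eta_n\to\infty$, and $E[\tau(\eta_n)]=E[\tau(n)]<\infty$ by Lemma~\ref{invert}. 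The conclusion of Theorem~\ref{const} gives that $(\ref{mgp2})$ is a $\{{\cal G}_t\}$-local martingale; in fact since $E[\tau(n)]<\infty$ for all $n$, the stopped processes are genuine martingales, so $X$ is a natural solution of the constrained martingale problem. Finally $X(0)=Y(\tau(0))$ has distribution $\nu$: this uses the definition of ${\cal Q}_0$ in Remark~\ref{init}, since $\nu\in{\cal Q}_0$ means precisely that $\nu={\cal L}(Y(\tau(0)))$ for the relevant solution (one has to be slightly careful that the $\nu$ appearing as ``initial distribution of $(Y,\lambda_0,\Lambda_1)$'' and the $\nu$ in ${\cal Q}_0$ are matched up correctly by the phrasing of the hypothesis — if $\nu\in{\cal Q}_0$ is to be the distribution of $X(0)$, one takes the solution $(Y,\lambda_0,\Lambda_1)$ witnessing membership in ${\cal Q}_0$).

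For part (b), the assumption is only that $\lambda_0$ is strictly increasing a.s. (which by Lemma~\ref{increasing} holds, e.g., whenever every solution has $\lambda_0(t)>0$ for all $t>0$). Strict monotonicity of $\lambda_0$ plus Condition~\ref{cond1}(c) give that $\tau$ is finite-valued, continuous, and strictly increasing, so $X(t)=Y(\tau(t))\in\bar E_0$ and $X\in D_{\bar E_0}[0,\infty)$, with $X(0)=Y(\tau(0))$ of distribution $\nu$ (here $\nu\in{\cal P}(\bar E_0)$ is already the initial distribution of $Y$; if $\lambda_0(t)>0$ for all $t>0$ then $\tau(0)=0$ and $X(0)=Y(0)$, matching Remark~\ref{init}). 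The point is that we no longer have the integrability bound $E[\tau(t)]<\infty$, so we cannot take $\eta_n=n$. Instead I would take the natural localizing sequence
\begin{equation}
\eta_n\equiv\inf\{t\ge 0:\ \tau(t)\ge n\}=\lambda_0(n),
\end{equation}
using $(\ref{tau})$ and the strict monotonicity/continuity of $\lambda_0$ to identify $\{\tau(t)\ge n\}$ with $\{t\ge\lambda_0(n)\}$. Each $\eta_n$ is a $\{{\cal G}_t\}$-stopping time (it is $\{{\cal G}_t\}$-measurable as the first entrance time of the adapted right-continuous process $\tau$ into $[n,\infty)$), and $\eta_n=\lambda_0(n)\to\infty$ a.s.\ by Condition~\ref{cond1}(c). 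Most importantly, $\tau(\eta_n)=\tau(\lambda_0(n))\le n$ by the definition of the inverse, so $E[\tau(\eta_n)]\le n<\infty$. Theorem~\ref{const} then gives that $(\ref{mgp2})$ is a $\{{\cal G}_t\}$-local martingale, i.e.\ $X$ is a natural solution of the constrained \emph{local} martingale problem with initial distribution $\nu$.

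The main obstacle — and it is really a bookkeeping obstacle rather than a deep one — is getting the identifications involving $\tau$ and $\lambda_0$ exactly right: that $\eta_n=\lambda_0(n)$ under strict monotonicity, that $\tau(\lambda_0(n))\le n$, that $\eta_n$ is a $\{{\cal G}_t\}$-stopping time, and that the right-continuity hypotheses of Theorem~\ref{const} are met so that ${\cal G}_t={\cal F}_{\tau(t)}$ is a legitimate filtration. One should also double-check that in part (b) the weaker strict-monotonicity hypothesis genuinely forces $\tau$ to be finite and continuous (it does: $\lambda_0$ continuous and strictly increasing with $\lambda_0(\infty)=\infty$ makes $\tau$ its genuine continuous inverse), since that is what makes $X$ a well-defined cadlag $\bar E_0$-valued process. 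Beyond these routine verifications, both parts are immediate corollaries of Theorem~\ref{const}.
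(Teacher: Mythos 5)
Your proposal is correct and follows essentially the same route as the paper: part (a) applies Theorem \ref{const} with $\eta_n=n$ (noting that Lemma \ref{invert} makes $(\ref{mgp2})$ a genuine martingale), and part (b) uses the continuity of $\tau$ coming from strict monotonicity of $\lambda_0$ to take $\eta_n=\inf\{t:\tau(t)>n\}$, which is exactly the paper's localizing sequence up to the harmless replacement of $>$ by $\geq$. Your extra bookkeeping (the identification $\eta_n=\lambda_0(n)$, the bound $\tau(\eta_n)\leq n$, the stopping-time check, and the matching of $\nu$ with ${\cal L}(X(0))$ via Remark \ref{init}) is all sound and merely makes explicit what the paper leaves implicit.
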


\begin{proof}
\begin{itemize}
\item[a)] Under the conditions of Lemma \ref{invert}, we can take 
$\eta_n=n$ and $(\ref{mgp2})$ is actually a martingale. 

\item[b)] If $\lambda_0$ is strictly increasing, then $\tau$ is continuous 
and we can take $\eta_n=\inf\{t:\tau (t)>n\}$.  
\end{itemize}
\end{proof}

We conclude this section with a result giving conditions 
that imply a solution of the constrained martingale problem is natural.

\begin{proposition}\label{nat-char} 
Suppose that $X$ is a solution of the constrained 
martingale problem for $(A,E_0,B,\Xi )$ and $\Lambda$ is the associated 
random measure. If $\Lambda ([0,\cdot ]\times\Xi )$ is continuous and 
for all $h\in C(\Xi )$ and $t>0$,
\begin{equation}\int_{[0,t]\times\Xi}h(x,u)\Lambda (ds\times dx\times 
du)=\int_{[0,t]\times\Xi}h(X(s),u)\Lambda (ds\times dx\times du),\label{concent}\end{equation}
then $X$ is natural. 
\end{proposition}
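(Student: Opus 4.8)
The plan is to reconstruct, from the given solution $X$ of the constrained martingale problem, a solution $(Y,\lambda_0,\Lambda_1)$ of the controlled martingale problem whose time-change (in the sense of Theorem \ref{const}) returns $X$. First I would define the time-change that slows $X$ down so that it spends positive ``clock time'' at the boundary. Set $\gamma(t) = t + \Lambda([0,t]\times\Xi)$; since $\Lambda([0,\cdot]\times\Xi)$ is assumed continuous and nondecreasing, $\gamma$ is continuous and strictly increasing, so it has a continuous, nondecreasing inverse $\sigma(s) = \inf\{t:\gamma(t)>s\}$. Then define $Y(s) = X(\sigma(s))$, $\lambda_0(s) = \sigma(s)$, and the random measure $\Lambda_1$ on $[0,\infty)\times U$ by pushing $\Lambda$ forward through the time-change and discarding the spatial coordinate: $\Lambda_1([0,s]\times C) \equiv \int_{[0,\sigma(s)]\times\Xi}\mathbf{1}_C(u)\,\Lambda(dr\times dx\times du)$. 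The filtration would be $\mathcal{F}_s = \mathcal{G}_{\sigma(s)}$ (right continuous since $\sigma$ is continuous and $\{\mathcal{G}_t\}$ may be taken right continuous).

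Next I would check the structural requirements in Definition \ref{clmp}. The identity $\lambda_0(s)+\lambda_1(s)=s$ follows because $\lambda_0(s)=\sigma(s)$ and $\lambda_1(s)=\Lambda_1([0,s]\times U)=\Lambda([0,\sigma(s)]\times\Xi)=\gamma(\sigma(s))-\sigma(s)=s-\sigma(s)$, using $\gamma(\sigma(s))=s$ by continuity and strict monotonicity of $\gamma$. That $\lambda_0$ increases only when $Y\in\bar E_0$ is automatic because $X$ takes values in $\bar E_0$, so $Y=X\circ\sigma$ does too. The support condition (\ref{Lam1supp}), namely $\Lambda_1(ds\times du)$ being concentrated on $\{(Y(s),u)\in\Xi\}$, is exactly where the hypothesis (\ref{concent}) enters: because $\Lambda$ is concentrated (after integration against any $h\in C(\Xi)$) on $\{x=X(s)\}$, and because $\Lambda$ already lives on $\Xi$ as a measure on $[0,\infty)\times\Xi$, the pushforward $\Lambda_1$ is concentrated on $\{(X(\sigma(s)),u):(X(\sigma(s)),u)\in\Xi\} = \{(Y(s),u)\in\Xi\}$. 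One should phrase this carefully via the continuity of $h$ and a monotone class / density argument, since (\ref{concent}) is stated for continuous test functions.

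Then I would verify the martingale property (\ref{mgp1}) for $(Y,\lambda_0,\Lambda_1)$ and $\{\mathcal{F}_s\}$. Starting from the fact that, for $f\in\mathcal{D}$, the process $M_f(t)$ in (\ref{mgp2}) is a $\{\mathcal{G}_t\}$-local martingale, apply the optional sampling theorem at the stopping times $\sigma(s)$ (which are bounded by $\sigma(s)\le s$, so the stopped process is a genuine martingale by the linear growth bound $|M_f(t)|\le 2\|f\|+\|Af\|\,t+\|Bf\|\,t$). After the time-substitution $t\mapsto\sigma(s)$, the term $\int_0^{\sigma(s)}Af(X(r))\,dr$ becomes $\int_0^s Af(X(\sigma(r)))\,d\lambda_0(r)=\int_0^s Af(Y(r))\,d\lambda_0(r)$ by the change-of-variables formula for the Lebesgue–Stieltjes integral against $d\sigma$, and $\int_{[0,\sigma(s)]\times\Xi}Bf(x,u)\,\Lambda(dr\times dx\times du)$ becomes, using (\ref{concent}) to replace $x$ by $X(\sigma(r))=Y(r)$, exactly $\int_{[0,s]\times U}Bf(Y(r),u)\,\Lambda_1(dr\times du)$. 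Hence (\ref{mgp1}) is a $\{\mathcal{F}_s\}$-martingale, so $(Y,\lambda_0,\Lambda_1)\in\Pi$. Finally I would confirm that time-changing this $(Y,\lambda_0,\Lambda_1)$ by $\tau(t)=\inf\{s:\lambda_0(s)>t\}$ recovers $X$: since $\lambda_0=\sigma$ and $\sigma$ is the inverse of $\gamma$ which is strictly increasing, $\tau=\gamma$, so $Y(\tau(t))=X(\sigma(\gamma(t)))=X(t)$ and the associated measure $\Lambda$ is reproduced. Strict increase of $\lambda_0=\sigma$ also holds because $\gamma(t)-\gamma(s)\ge t-s>0$, so the hypotheses of Theorem \ref{const} (in the local-martingale form, with $\eta_n=\inf\{t:\tau(t)>n\}$) are met and $X$ is natural.

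The main obstacle is the support/concentration bookkeeping: translating the integrated identity (\ref{concent}), valid only for $h\in C(\Xi)$, into the pathwise statement that $\Lambda_1$ is carried by $\{(Y(s),u)\in\Xi\}$, and simultaneously ensuring all the time-changed integrals are well defined and transform correctly when $\sigma$ has flat stretches (corresponding to the jumps of $\gamma$, i.e. to the ``real time'' that $X$ spends accumulating $\Lambda$-mass at the boundary). Handling these flat stretches — showing $Y=X\circ\sigma$ is still càdlàg and that the change-of-variables identities survive — is the technical heart; everything else is a routine consequence of optional sampling and Lemma \ref{picompact}-type arguments.
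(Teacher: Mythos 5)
Your construction is exactly the paper's: you take $\lambda_0$ to be the inverse of $t\mapsto t+\Lambda([0,t]\times\Xi)$, set $Y=X\circ\lambda_0$, push $\Lambda$ forward to get $\Lambda_1$, and use $(\ref{concent})$ to replace $x$ by $X(s)$ in the boundary integral before changing variables; the paper's proof is just a terser version of this. The only remark worth making is that your worry about ``flat stretches of $\sigma$'' is vacuous here, since the continuity of $\Lambda([0,\cdot]\times\Xi)$ makes $\gamma$ continuous and strictly increasing, hence $\sigma$ continuous and strictly increasing as well.
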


\begin{proof}
Define 
\[\lambda_0(t)\equiv\inf\{s:s+\Lambda ([0,s]\times\Xi )>t\},\quad 
Y(t)\equiv X(\lambda_0(t))\]
and
\[\Lambda_1([0,t]\times C)\equiv\int_{[0,\lambda_0(t)]\times\Xi}{\bf 1}_
C(u)\,\Lambda (ds\times dx\times du),\quad C\in {\cal B}(U).\]
Then 
\begin{eqnarray*}
&&f(X(\lambda_0(t)))-f(X(0))-\int_0^{\lambda_0(t)}Af(X(s))ds-\int_{
[0,\lambda_0(t)]\times\Xi}Bf(x,u)\Lambda (ds\times dx\times du)\\
&&\qquad =f(Y(t))-f(X(0))-\int_0^tAf(Y(s))d\lambda_0(s)-\int_{[0,
\lambda_0(t)]\times\Xi}Bf(X(s),u)\Lambda (ds\times dx\times du)\\
&&\qquad =f(Y(t))-f(X(0))-\int_0^tAf(Y(s))d\lambda_0(s)-\int_{[0,
t]\times U}Bf(Y(s),u)\Lambda_1(ds\times du).\end{eqnarray*}

\end{proof}

\setcounter{equation}{0}
\section{The Markov selection theorem}\label{selection}

Our strategy for obtaining a Markov solution for the 
constrained martingale problem for $(A,E_0,B,\Xi )$ generally 
follows the approach in Section 4.5 of \cite{EK86} (which 
in turn is based on an unpublished paper \cite{GG77}).  
With reference to these results, for $h\in C(\bar {E}_0)$, and 
$\nu\in {\cal P}(F_2)$ ($F_2$ defined in Lemma \ref{clin}), define 
\begin{equation}\gamma (\Pi_{\nu},h)\equiv\sup_{P\in\Pi_{\nu}}E^P
[\int_0^{\infty}e^{-\lambda_0(s)}h(Y(s))d\lambda_0(s)].\label{gam}\end{equation}
Recalling that $\Pi_{\nu}$ is compact (Lemma \ref{picompact}), 
we see that the supremum is achieved.  

\begin{lemma}\label{uexist}
For $h\in C(\bar {E}_0)$, there exists $v_h\in B(F_2)$  such that
\[\gamma (\Pi_{\nu},h)=\int_{F_2}v_h(x)\nu (dx),\quad\forall\nu\in 
{\cal P}(F_2),\]
and $v_h$ is upper semicontinuous.
\end{lemma}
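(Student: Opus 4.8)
The plan is to set $v_h(x):=\gamma(\Pi_{\delta_x},h)$ for $x\in F_2$; since $\delta_x\in{\cal P}(F_2)={\cal P}_0$ we have $\Pi_{\delta_x}\neq\emptyset$, and, $\Pi_{\delta_x}$ being compact (Lemma~\ref{picompact}), the supremum defining $\gamma(\Pi_{\delta_x},h)$ is attained. Write $J(P):=E^P[\int_0^\infty e^{-\lambda_0(s)}h(Y(s))\,d\lambda_0(s)]$, so that $\gamma(\Pi_\nu,h)=\sup_{P\in\Pi_\nu}J(P)$; note $|J|\le\|h\|$, because $\int_0^\infty e^{-\lambda_0(s)}\,d\lambda_0(s)=1$ (as $\lambda_0(0)=0$ and $\lambda_0(\infty)=\infty$ by Condition~\ref{cond1}(c)). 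Since $\lambda_0$ is $1$-Lipschitz, the change of variables $t=\lambda_0(s)$ gives $J(P)=E^P[\int_0^\infty e^{-t}h(Y(\tau(t)))\,dt]$ with $\tau$ as in (\ref{tau}); in particular $J$ is affine on $\Pi$.

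The technical heart is that $J$ is continuous (upper semicontinuity, which is all that is actually needed below, follows \emph{a fortiori}) on $\Pi$. Given $P_n\Rightarrow P$ in $\Pi$, I would pass to a Skorohod coupling so that $Y_n\to Y$ in $D_E[0,\infty)$ and $\lambda_{0,n}\to\lambda_0$ uniformly on compacts, almost surely. Choosing increasing homeomorphisms $\gamma_n\to{\rm id}$ witnessing the convergence of $Y_n$, and setting $\beta_n:=\lambda_{0,n}\circ\gamma_n$ (continuous, nondecreasing, from $0$ to $\infty$) and $\phi_n:=h\circ Y_n\circ\gamma_n$, two changes of variables give $\int_0^\infty e^{-\lambda_{0,n}(s)}h(Y_n(s))\,d\lambda_{0,n}(s)=\int_0^\infty e^{-t}\phi_n(\beta_n^{-1}(t))\,dt$, $\beta_n^{-1}$ the generalized inverse. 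Uniform convergence $\beta_n\to\lambda_0$ forces $\beta_n^{-1}\to\lambda_0^{-1}=\tau$ at every continuity point of $\tau$, and since $\lambda_0$ is continuous one checks that $\tau$ attains each value in the (countable) discontinuity set of $Y$ at a single time only; hence $\phi_n(\beta_n^{-1}(t))\to h(Y(\tau(t)))$ for Lebesgue-a.e.\ $t$, uniformly bounded by $\|h\|$, and dominated convergence yields $J(P_n)\to J(P)$. This step is the main obstacle: the Skorohod time changes acting on $Y_n$ and the uniform convergence of $\lambda_{0,n}$ are a priori incompatible, and flat stretches of $\lambda_0$ must be handled; what makes it work is that the exponential weighting turns $e^{-\lambda_0(s)}\,d\lambda_0(s)$ into a probability measure in $s$ (using Condition~\ref{cond1}(c)), whose behaviour under the time changes is tractable.

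Upper semicontinuity of $v_h$ then follows quickly. Evaluation $\pi_0:D_E[0,\infty)\to E$ at time $0$ is continuous, hence $P\mapsto P\circ\pi_0^{-1}$ is continuous; so if $x_n\to x$ in $F_2$ and $P_n\in\Pi_{\delta_{x_n}}$ attains $v_h(x_n)$, compactness of $\Pi$ gives a subsequence $P_{n_k}\Rightarrow P\in\Pi$ with $Y(0)$ distributed as $\lim_k\delta_{x_{n_k}}=\delta_x$ under $P$, so $P\in\Pi_{\delta_x}$ and $\limsup_k v_h(x_{n_k})=\limsup_k J(P_{n_k})\le J(P)\le v_h(x)$. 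Thus $v_h$ is upper semicontinuous, in particular Borel, and $\|v_h\|\le\|h\|$, so $v_h\in B(F_2)$.

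It remains to prove $\gamma(\Pi_\nu,h)=\int_{F_2}v_h\,d\nu$ for $\nu\in{\cal P}(F_2)$. For ``$\le$'', realize $P\in\Pi_\nu$ as the canonical-space solution with its own filtration $\{{\cal B}_t\}$ and let $\{P_x\}$ be a regular conditional probability given $Y(0)$, which is ${\cal B}_0$-measurable; a routine argument — the martingale property (\ref{mgp1}) and the support conditions on $\lambda_0,\Lambda_1$ pass through conditioning on ${\cal B}_0$, cf.\ Lemma~\ref{abscon}, with separability of ${\cal D}$ in $C(E)$ (Condition~\ref{cond1}(a)) used to fix a countable dense family of test data — shows $P_x\in\Pi_{\delta_x}$ for $\nu$-a.e.\ $x$, so $J(P)=\int J(P_x)\,\nu(dx)\le\int v_h\,d\nu$, and taking the supremum over $P$ gives the inequality. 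For ``$\ge$'', fix $\epsilon>0$: the set $\{(x,P):P\in\Pi_{\delta_x}\}$ is closed in $F_2\times\Pi$ (again because $\pi_0$ is continuous) with nonempty compact $x$-sections, and $\{(x,P):P\in\Pi_{\delta_x},\ J(P)\ge v_h(x)-\epsilon\}$ is a Borel set with nonempty compact $x$-sections, so a measurable selection theorem yields a measurable $x\mapsto P_x^\epsilon\in\Pi_{\delta_x}$ with $J(P_x^\epsilon)\ge v_h(x)-\epsilon$; the mixture $P^\epsilon:=\int P_x^\epsilon\,\nu(dx)$ lies in $\Pi_\nu$ (it has $Y(0)$ distributed as $\nu$, and the martingale and support properties are preserved under mixing — equivalently, $\Pi_\nu$ is convex and compact by Lemma~\ref{picompact}), so $\gamma(\Pi_\nu,h)\ge J(P^\epsilon)\ge\int v_h\,d\nu-\epsilon$. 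Letting $\epsilon\downarrow 0$ and combining with ``$\le$'' completes the proof.
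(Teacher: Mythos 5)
Your proof is correct in substance but follows a genuinely different route from the paper's. The paper does not prove the integral representation by hand: it shows that $\nu\mapsto\gamma(\Pi_{\nu},h)$ is \emph{affine} (the inequality $\geq$ in (\ref{gamineq}) from convexity of $\Pi$, the reverse from Lemma \ref{abscon} applied with $H_i=\frac{d\mu_i}{d\nu}$) and upper semicontinuous (compactness of $\Pi$ plus continuity of the functional $J$, which the paper asserts without proof), and then invokes Lemma 4.5.9 of \cite{EK86}, which delivers exactly the representation with $v_h(x)=\gamma(\Pi_{\delta_x},h)$ upper semicontinuous. You instead define $v_h$ on Dirac masses from the start and prove the two inequalities directly: ``$\leq$'' by disintegrating $P\in\Pi_{\nu}$ into regular conditional probabilities given $Y(0)$ and checking these remain solutions, and ``$\geq$'' by a measurable selection of near-optimizers followed by mixing. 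Both routes rest on the same two pillars (compactness of $\Pi$, continuity of $J$), and your detailed verification of the continuity of $J$ --- the interaction of the Skorohod time changes with the inverse $\tau$ of $\lambda_0$, and the observation that continuity of $\lambda_0$ forces $\tau$ to be injective so that the discontinuity set of $Y$ is hit on a Lebesgue-null set of $t$ --- is a genuine addition, since the paper only asserts this continuity. What the paper's route buys is brevity and reusability: the affineness computation (\ref{convex}) is the workhorse for Lemmas \ref{pih}, \ref{uexistn} and \ref{chginit}, where upper semicontinuity (hence your selection argument) is no longer available because $\Pi^{h_1,\ldots,h_n}$ need not be compact. What your route buys is self-containedness modulo two standard but nontrivial facts you should not undersell: that a regular conditional probability of a solution is again a solution (this needs the countable-determination argument you sketch, and could be replaced more cheaply by the paper's Lemma \ref{abscon} device), and a Borel or universally measurable selection theorem for the compact-sectioned Borel graph $\{(x,P):P\in\Pi_{\delta_x},\,J(P)\geq v_h(x)-\epsilon\}$. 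One small point to tidy: $h$ is only defined on $\bar{E}_0$ while $Y$ may visit $F_1\setminus\bar{E}_0$; extend $h$ continuously to $E$ (which changes nothing, since $d\lambda_0$ is carried by $\{s:Y(s)\in\bar{E}_0\}$) before running the path-space continuity argument.
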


\begin{proof}
Suppose first that $h$ is nonnegative.  Let $0<\alpha <1$ and 
$\nu ,\mu_1,\mu_2\in {\cal P}(F_2)$.  Suppose $\nu =\alpha\mu_1+(
1-\alpha )\mu_2$.  Then by 
convexity of $\Pi$, 
\begin{eqnarray}
&&\gamma (\Pi_{\nu},h)\label{gamineq}\\
&&\geq\sup_{P_1\in\Pi_{\mu_1},P_2\in\Pi_{\mu_2}}\bigg\{\alpha E^{
P_1}[\int_0^{\infty}e^{-\lambda_0(s)}h(Y(s))d\lambda_0(s)]\non+(1
-\alpha )E^{P_2}[\int_0^{\infty}e^{-\lambda_0(s)}h(Y(s))d\lambda_
0(s)]\bigg\}\non\\
&&=\alpha\gamma (\Pi_{\mu_1},h)+(1-\alpha )\gamma (\Pi_{\mu_2},h)
.\non\end{eqnarray}
But $\mu_1$ and $\mu_2$ are absolutely continuous with respect to 
$\nu$, so setting $H_i=\frac {d\mu_i}{d\nu}$, by Lemma \ref{abscon}, for $
P\in\Pi_{\nu}$,
\begin{eqnarray*}
&&E^P[\int_0^{\infty}e^{-\lambda_0(s)}h(Y(s))d\lambda_0(s)]\\
&&\qquad =\alpha E^{P^{H_1}}[\int_0^{\infty}e^{-\lambda_0(s)}h(Y(
s))d\lambda_0(s)]+(1-\alpha )E^{P^{H_2}}[\int_0^{\infty}e^{-\lambda_
0(s)}h(Y(s))d\lambda_0(s)]\end{eqnarray*}
so the reverse of the previous inequality holds and hence 
\begin{equation}\gamma (\Pi_{\nu},h)=\alpha\gamma (\Pi_{\mu_1},h)+(1-\alpha )\gamma 
(\Pi_{\mu_2},h).\label{convex}\end{equation}
The compactness of $\Pi$ and the continuity of 
$(Y,\lambda_0,\Lambda_1)\rightarrow\int_0^{\infty}e^{-\lambda_0(s
)}h(Y(s))d\lambda_0(s)$ ensure that the mapping 
$\nu\rightarrow\gamma (\Pi_{\nu},h)$ is upper semicontinuous, and the lemma 
follows by Lemma 4.5.9 of \cite{EK86}. 

If $h$ is not nonnegative, take $v_h\equiv v_{h-\inf h}+\inf h$. 
\end{proof}

\begin{lemma}\label{pih}
Let $\Pi_{\nu}^h\subset\Pi_{\nu}$ be the subset for which the supremum in 
(\ref{gam}) is achieved, that is,  $Q\in\Pi_{\nu}^h$ if and only if 
\[E^Q[\int_0^{\infty}e^{-\lambda_0(s)}h(Y(s))d\lambda_0(s)]=\gamma 
(\Pi_{\nu},h)=\int_{F_2}v_h(x)\nu (dx).\]
Defining $\Pi^h=\cup_{\nu\in {\cal P}(F_2)}\Pi^h_{\nu}$, $\Pi^h$ is convex, and for each $
\nu$, 
$\Pi_{\nu}^h$ is compact (however, it is not clear whether or not $
\Pi^h$ is 
compact).
\end{lemma}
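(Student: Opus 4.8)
The plan is to verify the three assertions — convexity of $\Pi^h$, and, for each $\nu$, nonemptiness and compactness of $\Pi^h_\nu$ — in that order, relying on the affine structure of $\gamma(\Pi_\nu, h)$ in $\nu$ established in Lemma \ref{uexist} together with the compactness and convexity of $\Pi_\nu$ from Lemma \ref{picompact}. The key quantity throughout is the linear functional
\[
\Phi(P) \equiv E^P\Big[\int_0^\infty e^{-\lambda_0(s)} h(Y(s))\, d\lambda_0(s)\Big],
\]
which is affine in $P$ and, by the continuity of $(Y,\lambda_0,\Lambda_1) \mapsto \int_0^\infty e^{-\lambda_0(s)} h(Y(s))\, d\lambda_0(s)$ noted in the proof of Lemma \ref{uexist}, is upper semicontinuous (in fact continuous, since $h$ is bounded) with respect to weak convergence on the compact set $\Pi$.

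First I would dispose of nonemptiness and compactness of $\Pi^h_\nu$. Since $\Phi$ is continuous and $\Pi_\nu$ is compact (Lemma \ref{picompact}), $\Phi$ attains its supremum $\gamma(\Pi_\nu, h)$ on $\Pi_\nu$, so $\Pi^h_\nu \neq \emptyset$; and $\Pi^h_\nu = \Pi_\nu \cap \{P : \Phi(P) \geq \gamma(\Pi_\nu,h)\}$ is the intersection of the compact set $\Pi_\nu$ with a closed set (closed because $\Phi$ is continuous), hence compact. Next, for convexity of $\Pi^h$: take $Q_1 \in \Pi^h_{\nu_1}$, $Q_2 \in \Pi^h_{\nu_2}$, and $\alpha \in (0,1)$; set $Q = \alpha Q_1 + (1-\alpha) Q_2$ and $\nu = \alpha \nu_1 + (1-\alpha)\nu_2$. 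Then $Q \in \Pi_\nu$ by convexity of $\Pi$, and $Q$ has $Y(0)$-marginal $\nu$, so $Q \in \Pi_\nu$. By affineness of $\Phi$ and the defining property of $Q_i$,
\[
\Phi(Q) = \alpha \Phi(Q_1) + (1-\alpha)\Phi(Q_2) = \alpha \gamma(\Pi_{\nu_1},h) + (1-\alpha)\gamma(\Pi_{\nu_2},h),
\]
and by the affine identity for $\gamma$ in Lemma \ref{uexist} (equation (\ref{convex}), together with $\gamma(\Pi_\nu,h) = \int_{F_2} v_h\, d\nu$), the right-hand side equals $\gamma(\Pi_\nu, h)$. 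Hence $Q \in \Pi^h_\nu \subset \Pi^h$, proving convexity.

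The only genuinely delicate point is making sure the pieces of the convexity argument fit together cleanly: one must check that $Q = \alpha Q_1 + (1-\alpha)Q_2$ indeed lies in $\Pi_\nu$ (i.e. that a convex combination of solutions of the controlled martingale problem is again one, which is the convexity half of Lemma \ref{picompact}, and that its initial distribution is the corresponding convex combination), and that the two-sided bound on $\gamma$ from Lemma \ref{uexist} — obtained there via convexity of $\Pi$ for one inequality and via the Radon–Nikodym argument of Lemma \ref{abscon} for the reverse — can be invoked as the already-proved identity rather than re-derived. I expect no real obstacle; the lemma is essentially a packaging of Lemmas \ref{picompact} and \ref{uexist}, and the parenthetical caveat that $\Pi^h$ itself need not be compact is simply the observation that $\bigcup_{\nu \in \mathcal{P}(F_2)} \Pi^h_\nu$ is an uncountable union of compacts with no uniform control as $\nu$ varies, so no further argument is needed there.
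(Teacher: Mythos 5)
Your proposal is correct and follows essentially the same route as the paper: convexity of $\Pi^h$ is obtained by combining the affineness of $P\mapsto E^P[\int_0^\infty e^{-\lambda_0(s)}h(Y(s))\,d\lambda_0(s)]$ with the identity $(\ref{convex})$ from Lemma \ref{uexist}, and compactness of $\Pi^h_\nu$ follows from compactness of $\Pi_\nu$ together with continuity of that functional. The only differences are presentational (you spell out nonemptiness and the closed-subset-of-a-compact-set step, which the paper leaves implicit).
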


\begin{proof}
Let $P_1\in\Pi^h_{\mu_1}$, $P_2\in\Pi^h_{\mu_1}$ and $P=\alpha P_
1+(1-\alpha )P_2$, $0<\alpha <1$.  
Setting $\nu =\alpha\mu_1+(1-\alpha )\mu_2$, 
\[E^P[\int_0^{\infty}e^{-\lambda_0(s)}h(Y(s))d\lambda_0(s)]\\
=\alpha\gamma (\Pi_{\mu_1},h)+(1-\alpha )\gamma (\Pi_{\mu_2},h)=\gamma 
(\Pi_{\nu},h),\]
where the last equality follows from $(\ref{convex})$. 

Compactness of $\Pi_{\nu}^h$ follows from the compactness of $\Pi_{
\nu}$ 
and the continuity of the functional $(Y,\lambda_0)\rightarrow\int_0^{\infty}e^{
-\lambda_0(s)}h(Y(s))d\lambda_0(s)$.
\end{proof}

Now consider $\{h_n\}\subset C(\bar {E}_0)$, $h_n\ge 0$, and define $
\Pi_{\nu}^{h_1,h_2}$ to 
be the subset of distributions $Q\in\Pi_{\nu}^{h_1}$ such that
\[E^Q[\int_0^{\infty}e^{-\lambda_0(s)}h_2(Y(s))d\lambda_0(s)]=\gamma 
(\Pi^{h_1}_{\nu},h_2)\equiv\sup_{P\in\Pi_{\nu}^{h_1}}E^P[\int_0^{
\infty}e^{-\lambda_0(s)}h_2(Y(s))d\lambda_0(s)],\]
and recursively, define $\Pi_{\nu}^{h_1,\ldots ,h_{n+1}}$ to be the subset of 
distributions $Q\in\Pi_{\nu}^{h_1,\ldots ,h_n}$ such that 
\begin{eqnarray*}
E^Q[\int_0^{\infty}e^{-\lambda_0(s)}h_{n+1}(Y(s))d\lambda_0(s)]&=&
\gamma (\Pi^{h_1,\ldots ,h_n}_{\nu},h_{n+1})\\
&\equiv&\sup_{P\in\Pi_{\nu}^{h_1,\ldots ,h_n}}E^P[\int_0^{\infty}
e^{-\lambda_0(s)}h_{h+1}(Y(s))d\lambda_0(s)]\end{eqnarray*}

Inductively, the compactness of $\Pi^{h_1,\ldots ,h_n}_{\nu}$ and the continuity of  \break 
$(Y,\lambda_0)\rightarrow\int_0^{\infty}e^{-\lambda_0(s)}h_{n+1}(
Y(s))d\lambda_0(s)$
ensure that $\Pi_{\nu}^{h_1,\ldots ,h_{n+1}}$ is compact 
and nonempty. Let $\Pi^{h_1,\ldots ,h_n}$$=\cup_{\nu\in {\cal P}(
F_2)}\Pi_{\nu}^{h_1,\ldots ,h_n}$.

We now need to show the existence of a function 
$v^{h_1,\ldots ,h_n}_{h_{n+1}}$ such that 
\[\gamma (\Pi_{\nu}^{h_1,\ldots h_n},h_{n+1})=\int_{F_2}v_{h_{n+1}}^{
h_1,\ldots ,h_n}(x)\nu (dx),\quad\forall\nu\in {\cal P}(F_2).\]
If $\nu =\alpha\mu_1+(1-\alpha )\mu_2$, then, by the same argument used 
for $(\ref{convex})$, 
\[\gamma (\Pi_{\nu}^{h_1,\ldots h_n},h_{n+1})=\alpha\gamma (\Pi_{
\mu_1}^{h_1,\ldots h_n},h_{n+1})+(1-\alpha )\gamma (\Pi_{\mu_2}^{
h_1,\ldots h_n},h_{n+1});\]
 however, we do not know the upper semicontinuity of 
$\gamma (\Pi_{\nu}^{h_1,\ldots h_n},h_{n+1})$ as a function of $\nu$, because 
it is not clear whether or not $\Pi^{h_1,\ldots h_n}$ is 
compact. Consequently,  we cannot apply 
Lemma 4.5.9 of \cite{EK86} as we did in Lemma 
\ref{uexist}.  

\begin{lemma}\label{uexistn}
For each $n=1,2,\ldots$, $\nu\in {\cal P}(F_2)$, and  $g\in C(\bar {
E}_0)$, there exists 
$v^{h_1,\ldots ,h_n}_g\equiv v^{n+1}_g\in B(F_2)$ such that
\begin{equation}\gamma (\Pi_{\nu}^{h_1,\ldots h_n},g)=\int_{E_2}v^{
n+1}_g(x)\nu (dx).\label{urep}\end{equation}
\end{lemma}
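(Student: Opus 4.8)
The plan is to establish the representation by a direct Choquet-type argument, exploiting the affine structure of $\nu\mapsto\gamma(\Pi_\nu^{h_1,\ldots,h_n},g)$ already noted in the text, and sidestepping the missing upper semicontinuity (which is precisely why Lemma 4.5.9 of \cite{EK86} is unavailable). First I would set $G(\nu)\equiv\gamma(\Pi_\nu^{h_1,\ldots,h_n},g)$ for $\nu\in\mathcal P(F_2)$, and recall from the displayed computation preceding the lemma that $G$ is affine: $G(\alpha\mu_1+(1-\alpha)\mu_2)=\alpha G(\mu_1)+(1-\alpha)G(\mu_2)$ for $\mu_1,\mu_2\in\mathcal P(F_2)$, $0<\alpha<1$. (The argument is the same two-sided inequality used for (4.5): the ``$\geq$'' direction is convexity of $\Pi^{h_1,\ldots,h_n}_\nu$ together with the fact, from Lemmas 2.6 and 2.7, that the constraints defining $\Pi^{h_1,\ldots,h_n}_{\mu_i}$ are inherited under conditioning by $H_i=d\mu_i/d\nu$, and the ``$\leq$'' direction is the disintegration $E^P[\cdots]=\alpha E^{P^{H_1}}[\cdots]+(1-\alpha)E^{P^{H_2}}[\cdots]$ valid for any $P\in\Pi_\nu^{h_1,\ldots,h_n}$.) Note also that $G$ is bounded: $0\le G(\nu)\le\|g\|$ since $\int_0^\infty e^{-\lambda_0(s)}d\lambda_0(s)\le 1$.

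Next I would define $v_g^{n+1}(x)\equiv G(\delta_x)$ for $x\in F_2$; this is well defined because $\Pi_{\delta_x}\neq\emptyset$ for $x\in F_2$ (Lemma 2.6) and, crucially, $\Pi_{\delta_x}^{h_1,\ldots,h_n}\neq\emptyset$, which follows inductively from the nonemptiness of $\Pi_{\nu}^{h_1,\ldots,h_n}$ established for all $\nu\in\mathcal P(F_2)$ in the paragraph before the lemma. Clearly $0\le v_g^{n+1}\le\|g\|$, so $v_g^{n+1}\in B(F_2)$ once I check measurability; for the representation itself, measurability (e.g. universal measurability, which suffices for integration against $\nu$) can be obtained from the fact that $v_g^{n+1}$ arises as a pointwise infimum over a countable dense family of the upper semicontinuous envelopes built at the first stage, or more simply one observes that $G(\delta_\cdot)$ is the decreasing limit $v_g^{n+1}=\inf_k w_k$ where $w_1=v_g$ (Lemma 4.4, u.s.c.) and each successive $w_{k}$ is an u.s.c. function obtained by applying Lemma 4.4 at stage $k$; since $\Pi^{h_1,\ldots,h_n}_\nu\subset\Pi^{h_1,\ldots,h_{k}}_\nu$ for $k\le n$ this gives $v_g^{n+1}\le v_g^{(k)}$ and the limit is reached at $k=n+1$. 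In any case $v_g^{n+1}$ is Borel (a countable inf of u.s.c. functions), so integration against $\nu\in\mathcal P(F_2)$ makes sense.

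It remains to prove $G(\nu)=\int_{F_2}v_g^{n+1}\,d\nu$ for every $\nu\in\mathcal P(F_2)$. For $\nu=\sum_{i=1}^m p_i\delta_{x^i}$ a finitely supported measure this is immediate from the affine identity iterated $m-1$ times: $G(\nu)=\sum_i p_i G(\delta_{x^i})=\sum_i p_i v_g^{n+1}(x^i)=\int v_g^{n+1}\,d\nu$. For general $\nu$ I would reduce to this case. The main obstacle is precisely this passage to general $\nu$: the affine identity plus finitely-supported measures does not by itself extend to arbitrary $\nu$ without some regularity, and we have deliberately given up continuity of $G$. The way around it is to note that the argument used in the proof of Lemma 2.6(b) already shows the needed compatibility: for $P\in\Pi_\nu^{h_1,\ldots,h_n}$ and any finite Borel partition $\{C_j\}$ of $F_2$ with $\nu(C_j)>0$, the conditioned measures $P^{H_j}$ with $H_j=\nu(C_j)^{-1}\mathbf 1_{C_j}$ lie in $\Pi^{h_1,\ldots,h_n}$ (inheritance of all the optimality constraints under conditioning, via Lemma 2.7), their barycenters are $\nu_j\equiv\nu(\cdot\cap C_j)/\nu(C_j)$, and $E^P[\int_0^\infty e^{-\lambda_0(s)}g(Y(s))d\lambda_0(s)]=\sum_j\nu(C_j)\,E^{P^{H_j}}[\cdots]$. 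Taking the supremum over $P\in\Pi_\nu^{h_1,\ldots,h_n}$ and using that $P$ can be chosen to be optimal simultaneously (this is where $P\in\Pi_\nu^{h_1,\ldots,h_n}$ already achieves $G(\nu)$, so each conditioned piece must achieve $G(\nu_j)$ by a no-slack argument: if one piece were strictly suboptimal we could improve it by Lemma 2.9-style pasting while keeping the $h_1,\ldots,h_n$ constraints, contradicting $P\in\Pi^{h_1,\ldots,h_n}_\nu$) yields $G(\nu)=\sum_j\nu(C_j)G(\nu_j)$. Refining the partition along a sequence whose mesh shrinks to points $\nu$-a.e.\ and using the boundedness of $G$ together with dominated convergence then gives $G(\nu)=\int_{F_2}G(\delta_x)\,\nu(dx)=\int_{F_2}v_g^{n+1}(x)\,\nu(dx)$, which is (4.9). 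The delicate point to get right is the ``no-slack'' claim, i.e.\ that an element of $\Pi_\nu^{h_1,\ldots,h_n}$ disintegrates into elements of $\Pi_{\nu_j}^{h_1,\ldots,h_n}$; this uses Lemma 2.7 (dual predictable projections are unaffected by the $\mathcal F_0$-conditioning) to see that each stagewise optimality is preserved, exactly as in the base case handled in Lemma 4.4.
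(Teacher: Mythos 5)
Your proposal has a genuine gap, in fact two, and both occur exactly where the paper itself warns that the standard route breaks down. First, the measurability of $x\mapsto G(\delta_x)=\gamma(\Pi^{h_1,\ldots,h_n}_{\delta_x},g)$ is not established. Your suggested justification --- that $G(\delta_\cdot)$ is a decreasing limit of upper semicontinuous functions obtained ``by applying Lemma \ref{uexist} at stage $k$'' --- does not work: Lemma \ref{uexist} yields upper semicontinuity of $\nu\mapsto\gamma(\Pi_\nu,h)$ only at the first stage, because there $\Pi_\nu$ is compact and $\nu\mapsto\Pi_\nu$ has a closed graph. The text immediately preceding the lemma states explicitly that upper semicontinuity of $\nu\mapsto\gamma(\Pi^{h_1,\ldots,h_n}_\nu,g)$ is unknown for $n\geq 1$ because $\Pi^{h_1,\ldots,h_n}$ may fail to be compact (only the fibers $\Pi^{h_1,\ldots,h_n}_\nu$ are compact), and the functions $v^k_{h_k}$ at different stages optimize different functionals over different constraint sets, so there is no monotone u.s.c.\ approximating family. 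Second, and more fundamentally, your passage from finite partitions to general $\nu$ requires $G(\nu_j)\to G(\delta_x)$ as the partition refines and $\nu_j\to\delta_x$ weakly. An affine, bounded $G$ does not satisfy the barycenter formula $G(\nu)=\int G(\delta_x)\,\nu(dx)$ without some semicontinuity along such sequences --- that regularity is precisely the hypothesis of Lemma 4.5.9 of \cite{EK86} that is unavailable here, so the dominated-convergence step at the end has nothing to converge. (A smaller issue: the ``no-slack'' disintegration needs that conditioning on an ${\cal F}_0$-event preserves membership in $\Pi^{h_1,\ldots,h_n}$, which is Lemma \ref{chginit}, proved later by a nontrivial truncation argument; Lemma \ref{abscon}, which you cite, only handles $\Pi$. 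This could be repaired by interleaving the induction, but the first two gaps cannot.)

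The paper avoids both obstacles by a perturbation argument: assuming $(\ref{urep})$ at stage $n$, it defines $v^{n+1}_g(x)$ as the limit of the difference quotients $\epsilon^{-1}(v^n_{h_n+\epsilon g}(x)-v^n_{h_n}(x))$. The lower bound on the $\liminf$ comes from feasibility of any element of $\Pi^{h_1,\ldots,h_n}_{\delta_x}$ for the perturbed problem; the upper bound on the $\limsup$ comes from compactness of the fiber $\Pi^{h_1,\ldots,h_{n-1}}_\nu$ and the fact that limit points of near-optimizers $P^\epsilon_\nu$ land in $\Pi^{h_1,\ldots,h_n}_\nu$. Measurability of $v^{n+1}_g$ is then automatic (a pointwise limit of differences of functions that are Borel by the inductive hypothesis), and the representation $(\ref{urep})$ follows from the uniform bound $0\leq\epsilon^{-1}(v^n_{h_n+\epsilon g}-v^n_{h_n})\leq\sup_z g(z)$ and dominated convergence applied to the already-established integral representations at stage $n$. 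If you want to salvage a disintegration approach, you would first have to prove compactness of $\Pi^{h_1,\ldots,h_n}$ or some substitute semicontinuity, which the paper deliberately does not assume.
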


\begin{proof}
Suppose first that $g\geq 0$. 
Following the argument on page 214 of \cite{EK86}, we 
proceed by induction.  For $n=1$, (\ref{urep}) is given 
by Lemma \ref{uexist}.  Assuming (\ref{urep}) holds for $n$,  
we claim
\[v^{n+1}_g(x)\equiv\lim_{\epsilon\rightarrow 0+}\epsilon^{-1}(v^
n_{h_n+\epsilon g}(x)-v^n_{h_n}(x))\]
satisfies $(\ref{urep})$.
Note that for all $\nu\in {\cal P}(F_2)$,
\begin{equation}\int_{F_2}v^n_{h_n+\epsilon g}(x)\nu (dx)\geq\int_{
F_2}v_{h_n}^n(x)\nu (dx)+\epsilon\gamma (\Pi_{\nu}^{h_1,\ldots ,h_
n},g),\label{ineq1}\end{equation}
and hence, for all $x\in F_2$, 
\[\liminf_{\epsilon\rightarrow 0}\epsilon^{-1}(v^n_{h_n+\epsilon 
g}(x)-v^n_{h_n}(x))\geq\gamma (\Pi_{\delta_x}^{h_1,\ldots ,h_n},g
).\]
For each $\epsilon >0$, let $P_{\nu}^{\epsilon}\in\Pi^{h_1,\ldots 
,h_{n-1}}_{\nu}$ satisfy 
\begin{eqnarray}
\int_{F_2}v_{h_n+\epsilon g}^n(x)\nu (dx)&=&E^{P_{\nu}^{\epsilon}}
[\int_0^{\infty}e^{-\lambda_0(s)}(h_n+\epsilon g)(Y(s))d\lambda_0
(s)]]\non\\
&\leq&\int_{F_2}v^n_{h_n}(x)\nu dx)+\epsilon E^{P_{\nu}^{\epsilon}}
[\int_0^{\infty}e^{-\lambda_0(s)}g(Y(s))d\lambda_0(s)]].
\label{ineq2}\end{eqnarray}
By $(\ref{ineq1})$ and $(\ref{ineq2})$, all limit points of $P_{\nu}^{
\epsilon}$ as $\epsilon\rightarrow 0$ are in $\Pi^{h_1,\ldots ,h_
n}_{\nu}$, so
\begin{eqnarray*}
\limsup_{\epsilon\rightarrow 0}\int_{F_2}\epsilon^{-1}(v_{h_n+\epsilon 
g}^n(x)-v^n_{h_n}(x))\nu (dx)&\leq&\limsup_{\epsilon\rightarrow 0}
E^{P^{\epsilon}_{\nu}}[\int_0^{\infty}e^{-\lambda_0(s)}g(Y(s))d\lambda_
0(s)]]\\
&\leq&\gamma (\Pi_{\nu}^{h_1,\ldots ,h_n},g).\end{eqnarray*}
Therefore, 
\[v^{n+1}_g(x)\equiv\lim_{\epsilon\rightarrow 0}\epsilon^{-1}(v_{
h_n+\epsilon g}^n(x)-v^n_{h_n}(x))\]
exists, and since, again by $(\ref{ineq1})$ and $(\ref{ineq2})$, 
\[0\leq\epsilon^{-1}(v_{h_n+\epsilon g}^n(x)-v^n_{h_n}(x))\leq\sup_
zg(z),\]
(\ref{urep}) holds by the dominated convergence theorem.

If $g$ is not nonnegative, take $v^{n+1}_g\equiv v_{g-\inf g}+\inf 
g$. 
\end{proof}

\subsection{Closure properties of $\Pi^{h_1,\ldots ,h_n}$}

\begin{lemma}\label{chginit}
Suppose $(Y,\lambda_0,\Lambda_1)$ is a solution of the controlled 
martingale problem
with filtration $\{{\cal F}_t\}$ and distribution 
$P\in\Pi^{h_1,\ldots ,h_n}$.  Let $H\geq 0$ be ${\cal F}_0$-measurable with $
E[H]=1$. 
Then $P^H$  defined as in Lemma \ref{abscon} is in $\Pi^{h_1,\ldots 
,h_n}$. 
\end{lemma}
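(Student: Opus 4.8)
The plan is to reduce this to Lemma \ref{abscon} together with the variational characterization of $\Pi^{h_1,\ldots,h_n}$. First I would recall that, by Lemma \ref{abscon}, $P^H \in \Pi$, so $P^H$ is the distribution of a genuine solution $(Y,\lambda_0,\Lambda_1)$ of the controlled martingale problem; what remains is to show that $P^H$ satisfies the successive optimality conditions defining $\Pi^{h_1,\ldots,h_n}$. Let $\nu_H$ denote the $Y(0)$-marginal of $P^H$, i.e. $\nu_H(dx) = E[H \mid Y(0)=x]\,\nu(dx)$ where $\nu$ is the initial distribution under $P$; in particular $\nu_H \ll \nu$.

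The key observation is that the evaluation functional $Q \mapsto E^Q[\int_0^\infty e^{-\lambda_0(s)} g(Y(s))\, d\lambda_0(s)]$ is, for $H$ of the form $\frac{d\mu}{d\nu}$, additive under the decomposition used throughout Section \ref{selection}: if $\nu = \alpha\mu_1 + (1-\alpha)\mu_2$ then conditioning on $Y(0)$ and using $E^{P^{H_i}}[\,\cdot\,] = E^P[H_i\,\cdot\,]$ (exactly as in the displayed computation following \eqref{gamineq} in the proof of Lemma \ref{uexist}) shows $E^P[\,\cdot\,] = \alpha E^{P^{H_1}}[\,\cdot\,] + (1-\alpha)E^{P^{H_2}}[\,\cdot\,]$. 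I would argue by induction on $n$. For the base case $n=1$: since $P \in \Pi^{h_1}_\nu$ achieves $\gamma(\Pi_\nu, h_1) = \int_{F_2} v_{h_1}(x)\,\nu(dx)$, and since for \emph{every} probability measure the bound $E^{P^{H'}}[\int_0^\infty e^{-\lambda_0(s)}h_1(Y(s))\,d\lambda_0(s)] \le v_{h_1}$ holds pointwise in the $Y(0)$-disintegration (by Lemma \ref{uexist} applied to $\delta_x$, $x \in \mathrm{supp}$), while the $\nu$-average of these quantities equals $\int v_{h_1}\,d\nu$, the bound must be attained $\nu$-a.e.; multiplying by $H$ and integrating then gives $E^{P^H}[\int_0^\infty e^{-\lambda_0(s)}h_1(Y(s))\,d\lambda_0(s)] = \int_{F_2} v_{h_1}(x)\,\nu_H(dx) = \gamma(\Pi_{\nu_H},h_1)$, so $P^H \in \Pi^{h_1}_{\nu_H} \subset \Pi^{h_1}$.

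For the inductive step, assume $P^H \in \Pi^{h_1,\ldots,h_{k-1}}$ is known whenever $P \in \Pi^{h_1,\ldots,h_{k-1}}$ and $H$ is a nonnegative $\mathcal{F}_0$-measurable density. Given $P \in \Pi^{h_1,\ldots,h_k}$, the inductive hypothesis gives $P^H \in \Pi^{h_1,\ldots,h_{k-1}}$; it then suffices to check the $k$-th optimality condition. Using Lemma \ref{uexistn}, which provides $v^{k}_{h_k} \in B(F_2)$ with $\gamma(\Pi^{h_1,\ldots,h_{k-1}}_\mu, h_k) = \int_{F_2} v^k_{h_k}\,d\mu$ for all $\mu \in \mathcal{P}(F_2)$, and the fact that the disintegration of $P$ over $Y(0)$ produces (via Lemma \ref{abscon} with $H' = \frac{1}{\nu(B_\epsilon(z))}\mathbf{1}_{B_\epsilon(z)}$ and a limit, as in Lemma \ref{clin}) members of $\Pi^{h_1,\ldots,h_{k-1}}_{\delta_x}$ whose $h_k$-functionals are bounded by $v^k_{h_k}(x)$, the same ``average equals supremum forces pointwise equality $\nu$-a.e.'' argument applies. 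Hence the $h_k$-functional of $P^H$ equals $\int_{F_2} v^k_{h_k}\,d\nu_H = \gamma(\Pi^{h_1,\ldots,h_{k-1}}_{\nu_H}, h_k)$, so $P^H \in \Pi^{h_1,\ldots,h_k}_{\nu_H}$.

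The main obstacle I anticipate is the disintegration step: making rigorous the claim that a solution $P$ lying in $\Pi^{h_1,\ldots,h_{k-1}}_\nu$ can be ``conditioned on $Y(0)=x$'' to yield, for $\nu$-a.e. $x$, a solution in $\Pi^{h_1,\ldots,h_{k-1}}_{\delta_x}$ that is \emph{optimal} for $h_k$ in the appropriate sense — equivalently, that the pointwise upper bound $v^k_{h_k}(x)$ is attained $\nu$-a.e. This requires care because $\Pi^{h_1,\ldots,h_{k-1}}$ is not known to be compact, so one cannot simply extract limit points at the level of the whole set; instead one works within the fixed compact fiber structure, using that $P$ itself witnesses attainment of the average and that each fiberwise functional is dominated by the corresponding $v$-function. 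Everything else is the routine transfer of a martingale property under a change of measure by an $\mathcal{F}_0$-measurable density, already handled by Lemma \ref{abscon}.
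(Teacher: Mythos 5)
Your overall architecture --- induction on $n$, Lemma \ref{abscon} for membership in $\Pi$, then verification of the successive optimality identities --- matches the paper, but the mechanism you use for the optimality identity has a genuine gap. Writing $Z=\int_0^\infty e^{-\lambda_0(s)}h_k(Y(s))\,d\lambda_0(s)$, you disintegrate $P$ over $Y(0)$ and conclude $E^P[Z\mid Y(0)]=v^k_{h_k}(Y(0))$ a.s.; but $H$ is only assumed ${\cal F}_0$-measurable, and ${\cal F}_0$ may be strictly larger than $\sigma(Y(0))$ (the lemma must hold for an arbitrary filtration, and in the proof of Lemma \ref{taushiftn} it is applied to the shifted process with $H$ measurable with respect to ${\cal F}_\tau$, which is larger than $\sigma(Y(\tau))$). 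So ``multiplying by $H$ and integrating'' does not yield $E[HZ]=E[H\,v^k_{h_k}(Y(0))]$: that requires $E[Z\mid{\cal F}_0]=v^k_{h_k}(Y(0))$, not merely $E[Z\mid Y(0)]=v^k_{h_k}(Y(0))$, since $H$ and $Z$ need not be conditionally independent given $Y(0)$. Separately, the step you yourself flag as the main obstacle --- that the regular conditional distributions of $P$ given $Y(0)$ (or, as needed, given ${\cal F}_0$) lie $\nu$-a.e.\ in $\Pi^{h_1,\ldots,h_{k-1}}_{\delta_x}$ and are a.e.\ optimal at each preceding level --- is asserted but not proved, and it is not a routine citation.

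Both difficulties are avoidable, and the paper's proof avoids them entirely. Fix $c>0$ and set $H^c=(H\wedge c)/E[H\wedge c]$, $G^c=(c-H\wedge c)/E[c-H\wedge c]$; both are bounded ${\cal F}_0$-measurable densities, so (by the inductive hypothesis at level $n-1$, the base case being Lemma \ref{abscon}) $P^{H^c},P^{G^c}\in\Pi^{h_1,\ldots,h_{n-1}}$, and Lemma \ref{uexistn} gives $E^{Q}[Z]\le E^{Q}[v^n_{h_n}(Y(0))]$ for $Q=P^{H^c}$ and $Q=P^{G^c}$. Since $P=\frac{E[H\wedge c]}{c}P^{H^c}+\frac{E[c-H\wedge c]}{c}P^{G^c}$ and $E^P[Z]=E^P[v^n_{h_n}(Y(0))]$, both inequalities must be equalities; in particular $E^{P^{H^c}}[Z]=E^{P^{H^c}}[v^n_{h_n}(Y(0))]$, and letting $c\to\infty$ with monotone convergence gives the claim for $P^H$. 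This complementary-density trick supplies the reverse inequality $E[HZ]\ge E[H\,v^n_{h_n}(Y(0))]$ directly for ${\cal F}_0$-measurable $H$ and never needs conditional laws to be solutions. If you prefer your route, you must (i) condition on ${\cal F}_0$ rather than $Y(0)$ throughout, and (ii) actually establish, via a countable determining family of martingale relations, that the conditional laws are a.e.\ solutions belonging to each $\Pi^{h_1,\ldots,h_{j}}_{\delta_{Y(0)}}$, $j\le k-1$.
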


\begin{proof}
Let $c>0$, $H^c=\frac {H\wedge c}{E[H\wedge c]}$, and $G^c=\frac {
c-H\wedge c}{E[c-H\wedge c]}$.  Then
\begin{eqnarray*}
E^P[v^n_{h_n}(Y(0))]&=&\frac {E[H\wedge c]}cE^{P^{H^c}}[\int_0^{\infty}
e^{-\lambda_0(s)}h_n(Y(s))d\lambda_0(s)]\\
&&\qquad\qquad +\frac {E[c-H\wedge c]}cE^{P^{G^c}}[\int_0^{\infty}
e^{-\lambda_0(s)}h_n(Y(s))d\lambda_0(s)]\\
&\leq&\frac {E[H\wedge c]}cE^{P^{H^c}}[v^n_{h_n}(Y(0))]+\frac {E[
c-H\wedge c]}cE^{P^{G^c}}[v^n_{h_n}(Y(0))]\\
&=&E^P[v^n_{h_n}(Y(0))],\end{eqnarray*}
and since the inequality is termwise, we must have
\[E^{P^{H^c}}[\int_0^{\infty}e^{-\lambda_0(s)}h_n(Y(s))d\lambda_0
(s)]=E^{P^{H^c}}[v^n_{h_n}(Y(0))].\]
Letting $c\rightarrow\infty$, the monotone convergence theorem implies
\begin{equation}E^{P^H}\begin{array}{c}
\end{array}
[\int_0^{\infty}e^{-\lambda_0(s)}h_n(Y(s))d\lambda_0(s)]=E^{P^H}[
v^n_{h_n}(Y(0))],\label{extPI}\end{equation}
and  $P^H\in\Pi^{h_1,\ldots ,h_n}$.
\end{proof}

\begin{remark}\label{reminit}
Note that $(\mbox{\rm \ref{extPI}})$ implies 
\[E^P[\int_0^{\infty}e^{-\lambda_0(s)}h_n(Y(s))d\lambda_0(s)|{\cal B}_
0]=v^n_{h_n}(Y(0)).\]
In particular 
\[v^n_{h_n}(x)=E^P[\int_0^{\infty}e^{-\lambda_0(s)}h_n(Y(s))d\lambda_
0(s)],\quad P\in\Pi^{h_1,\ldots ,h_n}_{\delta_x},\quad x\in F_2.\]
\end{remark}

\begin{lemma}\label{taushiftn} 
Suppose $(Y,\lambda_0,\Lambda_1)$ is a solution of the controlled 
martingale problem with filtration $\{{\cal F}_t\}$ with distribution 
$P\in\Pi^{h_1,\ldots ,h_n}$.  Let $\tau$ be a finite $\{{\cal F}_
t\}$-stopping time and 
let $H\geq 0$ be ${\cal F}_{\tau}$-measurable with $E[H]=1$. 
Then, for $(Y^{\tau},\lambda_0^{\tau},\Lambda^{\tau}_1)$ defined by (\ref{shift}), 
$P^{\tau ,H}$ defined by (\ref{cphdist}) is in 
$\Pi^{h_1,\ldots ,h_n}$ and $\Pi^{h_1,\ldots ,h_n}$ is closed under the pasting 
operation in Lemma \ref{paste}.  
\end{lemma}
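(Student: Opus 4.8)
The plan is to prove the two assertions in sequence, reducing each to machinery already established. First, $P^{\tau,H} \in \Pi$ by Lemma \ref{ctrans}, so the only thing to verify is that $P^{\tau,H}$ inherits each of the defining optimality properties $\Pi^{h_1},\Pi^{h_1,h_2},\ldots,\Pi^{h_1,\ldots,h_n}$. The key observation, as in Remark \ref{reminit}, is that membership in $\Pi^{h_1,\ldots,h_k}$ can be characterized pointwise along the path: for a solution with distribution in $\Pi^{h_1,\ldots,h_{k-1}}$, one has $P \in \Pi^{h_1,\ldots,h_k}$ if and only if
\[
E^{P}\Big[\int_0^{\infty} e^{-\lambda_0(s)} h_k(Y(s))\, d\lambda_0(s)\Big] = E^{P}\big[v^k_{h_k}(Y(0))\big],
\]
which is an equality between a supremum-achieving quantity and its representing upper bound $\int v^k_{h_k}\,d\nu$. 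The strategy is therefore to show the analogous pointwise identity holds for $(Y^\tau, \lambda_0^\tau, \Lambda_1^\tau)$ under $P^{\tau,H}$, for each $k = 1, \ldots, n$, by induction on $k$.

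For the inductive step I would argue as follows. Assuming $P^{\tau,H} \in \Pi^{h_1,\ldots,h_{k-1}}$ (the base case $k=1$ being automatic since $\Pi^{h_1,\ldots,h_0} = \Pi$), I want the optimality identity for level $k$. The natural tool is the strong Markov/conditioning structure: on $\{Y^\tau(0) = y\}$, i.e. $\{Y(\tau) = y\}$, the shifted process is (a version of) a solution started at $y$, and by the optimality of $P$ at level $k$ together with the tower property, the shifted functional at level $k$ must also be optimal. Concretely, one uses that
\[
E^{P}\Big[\int_0^{\infty} e^{-\lambda_0(\tau+s)+\lambda_0(\tau)} h_k(Y(\tau+s))\, d\lambda_0^\tau(s)\ \Big|\ {\cal F}_\tau\Big] \le v^k_{h_k}(Y(\tau))
\]
always holds (it is the upper bound from Lemma \ref{uexistn} applied to the conditional law), while integrating the left side against $e^{-\lambda_0(\tau)}$ and combining with the level-$k$ optimality of $P$ forces equality $P$-a.s., hence also $P^{\tau,H}$-a.s. since $H$ is ${\cal F}_\tau$-measurable and nonnegative. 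This gives $E^{P^{\tau,H}}[\int_0^\infty e^{-\lambda_0(s)} h_k(Y(s))\,d\lambda_0(s)] = E^{P^{\tau,H}}[v^k_{h_k}(Y(0))]$, which is exactly the level-$k$ membership condition; this completes the induction and yields $P^{\tau,H} \in \Pi^{h_1,\ldots,h_n}$.

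For the closure under the pasting operation of Lemma \ref{paste}: given $P^0$ coming from a solution with distribution in $\Pi^{h_1,\ldots,h_n}$, stopped at $\tau$, and $P^1 \in \Pi^{h_1,\ldots,h_n}_\nu$ with $\nu = {\cal L}(Y(\tau))$, Lemma \ref{paste} already produces a pasted solution $P$ of the controlled martingale problem; I must check $P \in \Pi^{h_1,\ldots,h_n}$. I would again verify the level-$k$ identity inductively: decompose the integral $\int_0^\infty e^{-\lambda_0(s)} h_k(Y(s))\,d\lambda_0(s)$ into the part before $\tau$ and the part after. The pre-$\tau$ part is unchanged by the pasting and the post-$\tau$ part equals $e^{-\lambda_0(\tau)}$ times the corresponding functional of $(Y^\tau,\lambda_0^\tau,\Lambda_1^\tau)$, whose law under $P$ is $P^1 \in \Pi^{h_1,\ldots,h_n}_\nu$; applying Remark \ref{reminit} to $P^1$ replaces the post-$\tau$ functional by $v^k_{h_k}(Y(\tau))$, while the optimality of $P^0$ at level $k$ (together with the characterization of $v^k_{h_k}$ via conditioning on ${\cal F}_\tau$, exactly as in the first part) identifies the total with $E^P[v^k_{h_k}(Y(0))]$. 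The main obstacle I anticipate is the bookkeeping in this decomposition — making precise that the dynamic programming relation "value at time $0$ = expected running reward up to $\tau$ plus discounted value at $Y(\tau)$" holds at each level $k$ and is compatible with conditioning on ${\cal F}_\tau$ — but this is essentially the same argument already used to prove $v^1_{h_1}$ is representable in Lemma \ref{uexist} and Remark \ref{reminit}, now carried through the recursive levels, so no genuinely new idea is needed.
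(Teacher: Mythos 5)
There is a genuine gap in the first half of your argument, and it propagates to the second half. You decompose the level-$k$ value of $P$ as the pre-$\tau$ reward plus $E^P[e^{-\lambda_0(\tau )}C_{\tau}]$, where $C_{\tau}$ denotes the conditional expectation given ${\cal F}_{\tau}$ of the shifted functional, you note $C_{\tau}\leq v^k_{h_k}(Y(\tau ))$ a.s., and you claim that the level-$k$ optimality of $P$ ``forces equality.'' It does not: optimality only gives
\[
\gamma (\Pi^{h_1,\ldots ,h_{k-1}}_{\nu},h_k)=E^P\Bigl[\int_0^{\tau}e^{-\lambda_0(s)}h_k(Y(s))d\lambda_0(s)\Bigr]+E^P[e^{-\lambda_0(\tau )}C_{\tau}]\leq E^P\Bigl[\int_0^{\tau}\cdots\Bigr]+E^P[e^{-\lambda_0(\tau )}v^k_{h_k}(Y(\tau ))],
\]
which bounds $\gamma$ from \emph{above} and says nothing about whether $C_{\tau}=v^k_{h_k}(Y(\tau ))$. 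To force equality you need the reverse inequality $E^P[\int_0^{\tau}\cdots ]+E^P[e^{-\lambda_0(\tau )}v^k_{h_k}(Y(\tau ))]\leq\gamma (\Pi^{h_1,\ldots ,h_{k-1}}_{\nu},h_k)$, i.e.\ the dynamic programming upper bound, and since $\gamma$ is a supremum over $\Pi^{h_1,\ldots ,h_{k-1}}_{\nu}$ the only way to get it is to exhibit a competitor in that class whose value equals the left-hand side. That competitor is exactly the pasted solution of Lemma \ref{paste} (with $P^1\in\Pi^{h_1,\ldots ,h_k}_{\zeta}$, $\zeta ={\cal L}(Y(\tau ))$, and the weight $H_0=e^{-\lambda_0(\tau )}/E[e^{-\lambda_0(\tau )}]$), and for it to be admissible you must already know that $\Pi^{h_1,\ldots ,h_{k-1}}$ is closed under pasting. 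So the two assertions of the lemma cannot be proved ``in sequence'': shift-closure at level $k$ requires pasting-closure at level $k-1$, and your second part in turn invokes the identity $C_{\tau}=v^k_{h_k}(Y(\tau ))$ ``exactly as in the first part,'' inheriting the gap. The induction has to carry both properties simultaneously; this is how the paper's proof is organized, where the chain of inequalities culminating in $(\ref{pasteh})$ and the value of the pasted process squeeze everything into equalities and deliver both conclusions at once.

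A secondary point: the squeeze naturally yields $P^{\tau ,H_0}\in\Pi^{h_1,\ldots ,h_n}$ for the specific weight $H_0$ above, not for arbitrary ${\cal F}_{\tau}$-measurable $H$; the paper passes to general $H$ by writing $P^{H}(C)=E^{P^{H_0}}[HH_0^{-1}{\bf 1}_C]$ and invoking Lemma \ref{chginit}. Your route via the conditional law would accomplish the same reduction, but only after the a.s.\ equality $C_{\tau}=v^k_{h_k}(Y(\tau ))$ has actually been established, which returns you to the gap above. The ingredients you assemble (the decomposition at $\tau$, the bound $C_{\tau}\leq v^k_{h_k}(Y(\tau ))$, Remark \ref{reminit} applied to $P^1$) are all correct and are the ones the paper uses; what is missing is the competitor that closes the loop.
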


\begin{proof}
Again we proceed by induction. 
By Lemma 2.11, 
\begin{eqnarray}
\gamma (\Pi_{\nu},h_1)&=&E[\int_0^{\infty}e^{-\lambda_0(s)}h_1(Y(
s))d\lambda_0(s)]\non\\
&=&E[\int_0^{\tau}e^{-\lambda_0(s)}h_1(Y(s))d\lambda_0(s)]+E[e^{-
\lambda_0(\tau )}\int_0^{\infty}e^{-\lambda_0^{\tau}(s)}h_1(Y^{\tau}
(s))d\lambda_0^{\tau}(s)]\non\\
&=&E[\int_0^{\tau}e^{-\lambda_0(s)}h_1(Y(s))d\lambda_0(s)]+E[e^{-
\lambda_0(\tau )}]E^{P^{\tau, H_0}}[\int_0^{\infty}e^{-\lambda_0^{\tau}
(s)}h_1(Y^{\tau}(s))d\lambda_0^{\tau}(s)]\non\\
&\leq&E[\int_0^{\tau}e^{-\lambda_0(s)}h_1(Y(s))d\lambda_0(s)]+E[e^{
-\lambda_0(\tau )}]\gamma (\Pi_{\mu},h_1),\label{pasteh}\end{eqnarray}
where 
\[H_0\equiv\frac {e^{-\lambda_0(\tau )}}{E[e^{-\lambda_0(\tau )}]}
,\quad\mu (C)\equiv E[H_0{\bf 1}_C(Y(\tau ))].\]
Let $\zeta$ be the distribution of $Y(\tau )$ and let $P^1\in\Pi_{
\zeta}^{h_1}$.
Taking  $(Y^0,\lambda^0_0,\Lambda^0_1,\tau^0)$ with the same distribution as $
(Y,\lambda_0,\Lambda_1,\tau )$
and $(Y^1,\lambda_0^1,\Lambda_1^1)$ with distribution $P^1$,  let $
(\hat {Y},\hat{\lambda}_0,\hat{\Lambda}_1,\hat{\tau })$ be  
given by Lemma 2.12. Then, for $\hat {H}_0\equiv\frac {e^{-\hat{\lambda}_
0(\hat{\tau })}}{E[e^{-\hat{\lambda}_0(\hat{\tau })}]}$,
\begin{eqnarray*}
&&E[\int_0^{\infty}e^{-\hat{\lambda}_0(s)}h_1(\hat {Y}(s))ds]\\
&&=E[\int_0^{\tau}e^{-\lambda_0(s)}h_1(Y(s))d\lambda_0(s)]+E[e^{-
\hat{\lambda}_0(\hat{\tau })}]E[\hat {H}_0\int_0^{\infty}e^{-\hat{
\lambda}_0^{\hat{\tau}}(s)}h_1(\hat {Y}^{\hat{\tau}}(s))d\hat{\lambda}_
0^{\hat{\tau}}(s)]\\
&&=E[\int_0^{\tau}e^{-\lambda_0(s)}h_1(Y(s))d\lambda_0(s)]+E[e^{-
\hat{\lambda}_0(\hat{\tau })}]E^{P^{\hat {H}_0}}[\int_0^{\infty}e^{
-\hat{\lambda}_0^{\hat{\tau}}(s)}h_1(\hat {Y}^{\hat{\tau}}(s))d\hat{
\lambda}_0^{\hat{\tau}}(s)]\\
&&=E[\int_0^{\tau}e^{-\lambda_0(s)}h_1(Y(s))d\lambda_0(s)]+E[e^{-
\lambda_0(\tau )}]\gamma (\Pi_{\mu},h_1)\\
&&\geq\gamma (\Pi_{\nu},h_1),\end{eqnarray*}
where the third equality holds by Lemma 4.4 and the inequality is given by (\ref{pasteh}).  
Consequently, equality must hold here and in (\ref{pasteh}), 
giving both that $P^{\tau ,H_0}$ is in $\Pi^{h_1}$ 
and that $\Pi^{h_1}$ is closed under the pasting operation. Now 
for an arbitrary $H$ as in the statement of the theorem, 
note that the probability measure $P^H$  can be written 
as 
\[P^H(C)=E^{P^{H_0}}[HH_0^{-1}\mathbf{1}_C],\quad C\in {\cal B}(D_
E[0,\infty )\times C_{[0,\infty )}[0,\infty )\times {\cal L}_U),\]
where $E^{P^{H_0}}[HH_0^{-1}]=1.$ Since $P^{\tau ,H_0}$ is the distribution 
of $(Y^{\tau},\lambda_0^{\tau},\Lambda^{\tau}_1)$ under $P^{H_0}$, Lemma 4.4 yields that 
the distribution of $(Y^{\tau},\lambda_0^{\tau},\Lambda^{\tau}_1)$ under $
P^H$ is in $\Pi^{h_1}$, i.e. 
$P^{\tau ,H}$ is in $\Pi^{h_1}$. 

Now suppose that the result holds for $1\leq k\leq n-1$.
 In particular, if the distribution of $(Y,\lambda_0,\Lambda_1)$ is in 
$\Pi^{h_1,\ldots ,h_{n-1}}$, then the distribution of $(Y^{\tau},
\lambda_0^{\tau},\Lambda^{\tau}_1)$ under 
$P^{H_0}$ is in $\Pi^{h_1,\ldots ,h_{n-1}}$.  With this observation, the proof 
of the result for $n$ follows.
\end{proof}

\subsection {The martingale property and the Markov 
selection theorem}

\begin{lemma}\label{Ymgrel}
Let $(Y,\lambda_0,\Lambda_1)$ be a solution of the controlled martingale 
problem for $(A,E_0,B,\Xi )$ with filtration $\{{\cal F}_t\}$ and distribution in $
\Pi^{h_1,\ldots ,h_n}$.  
For $v^n_{h_n}$ given by Lemma \ref{uexistn}, 
\[e^{-\lambda_0(t)}v^n_{h_n}(Y(t))+\int_0^te^{-\lambda_0(s)}h_n(Y
(s))d\lambda_0(s)\]
is a $\{{\cal F}_t\}$-martingale, and
\begin{equation}v_{h_n}(Y(0))=E[\int_0^{\infty}e^{-\lambda_0(t)}h_
n(Y(s))d\lambda_0(s)|{\cal F}_0].\label{vdef}\end{equation}
\end{lemma}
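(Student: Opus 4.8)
The plan is to verify the martingale property by combining the optional sampling / pasting machinery for $\Pi^{h_1,\ldots,h_n}$ with the additive structure of the functional $\int_0^\infty e^{-\lambda_0(s)}h_n(Y(s))\,d\lambda_0(s)$. First I would record the basic decomposition: for any finite $\{\mathcal F_t\}$-stopping time $\tau$, splitting the integral at $\tau$ and using $\lambda_0(\tau+s)=\lambda_0(\tau)+\lambda_0^\tau(s)$ gives
\begin{equation}
\int_0^\infty e^{-\lambda_0(s)}h_n(Y(s))\,d\lambda_0(s)
=\int_0^\tau e^{-\lambda_0(s)}h_n(Y(s))\,d\lambda_0(s)
+e^{-\lambda_0(\tau)}\int_0^\infty e^{-\lambda_0^\tau(s)}h_n(Y^\tau(s))\,d\lambda_0^\tau(s),
\label{prop-split}
\end{equation}
exactly as in the display \eqref{pasteh} in the proof of Lemma \ref{taushiftn}. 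This identity holds pathwise, so it also holds after conditioning on $\mathcal F_\tau$.

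Next I would take the particular stopping time $\tau=t$ (a fixed deterministic time) and, more generally, an arbitrary bounded $\{\mathcal F_t\}$-stopping time $\tau\le t$, and condition \eqref{prop-split} on $\mathcal F_\tau$. The key input is that, by Lemma \ref{taushiftn}, the conditional distribution of $(Y^\tau,\lambda_0^\tau,\Lambda_1^\tau)$ given $\mathcal F_\tau$ is (a regular conditional version of) an element of $\Pi^{h_1,\ldots,h_n}$ with initial distribution the conditional law of $Y(\tau)$; more precisely, applying Lemma \ref{taushiftn} with $H_0\equiv e^{-\lambda_0(\tau)}/E[e^{-\lambda_0(\tau)}]$ and then with a general $\mathcal F_\tau$-measurable $H$ lets us identify, for $P\in\Pi^{h_1,\ldots,h_n}_{\delta_x}$, the value $E^P[\int_0^\infty e^{-\lambda_0(s)}h_n(Y(s))\,d\lambda_0(s)]=v^n_{h_n}(x)$ (this is Remark \ref{reminit}). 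Using the Markov-like disintegration coming from Lemma \ref{paste}, conditioning the second term of \eqref{prop-split} on $\mathcal F_\tau$ yields
\[
E\!\left[e^{-\lambda_0(\tau)}\int_0^\infty e^{-\lambda_0^\tau(s)}h_n(Y^\tau(s))\,d\lambda_0^\tau(s)\,\Big|\,\mathcal F_\tau\right]
= e^{-\lambda_0(\tau)}\,v^n_{h_n}(Y(\tau)).
\]
Substituting back into the conditional form of \eqref{prop-split} gives
\[
E\!\left[\int_0^\infty e^{-\lambda_0(s)}h_n(Y(s))\,d\lambda_0(s)\,\Big|\,\mathcal F_\tau\right]
=\int_0^\tau e^{-\lambda_0(s)}h_n(Y(s))\,d\lambda_0(s)+e^{-\lambda_0(\tau)}v^n_{h_n}(Y(\tau)),
\]
which is precisely the statement that $e^{-\lambda_0(t)}v^n_{h_n}(Y(t))+\int_0^t e^{-\lambda_0(s)}h_n(Y(s))\,d\lambda_0(s)$ is an $\{\mathcal F_t\}$-martingale (taking $\tau=s<t$ and then $\tau=t$, or invoking the optional sampling characterization directly). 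Taking $\tau=0$ gives \eqref{vdef}, using that $v^n_{h_n}(Y(0))=E[\int_0^\infty e^{-\lambda_0(s)}h_n(Y(s))\,d\lambda_0(s)\mid\mathcal F_0]$ by Remark \ref{reminit}. Boundedness of $v^n_{h_n}$ (it is in $B(F_2)$) and of $h_n$, together with $\lambda_0(s)\le s$ and $e^{-\lambda_0(s)}$ integrability, make the conditional expectations well-defined and finite, so no uniform-integrability subtlety arises beyond what is already handled.

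The main obstacle is the rigorous justification of the conditioning step — i.e., that given $\mathcal F_\tau$ the shifted triple $(Y^\tau,\lambda_0^\tau,\Lambda_1^\tau)$ behaves like a fresh solution in $\Pi^{h_1,\ldots,h_n}$ started from $Y(\tau)$, so that one may legitimately replace its conditional expectation by $v^n_{h_n}(Y(\tau))$. This is exactly where Lemma \ref{taushiftn} (closure of $\Pi^{h_1,\ldots,h_n}$ under the shift-and-reweight operation $P\mapsto P^{\tau,H}$ and under pasting) does the work: one writes the regular conditional probability of the shifted process as a mixture of measures $P^{\tau,H}$ over $\mathcal F_\tau$-measurable $H$, concludes each is in $\Pi^{h_1,\ldots,h_n}$, and then applies Remark \ref{reminit} fiberwise. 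I would present this as a short lemma-citation rather than re-deriving it, since the hard analytic content is already contained in Lemmas \ref{paste} and \ref{taushiftn}; the remaining steps are the pathwise identity \eqref{prop-split} and bookkeeping with conditional expectations.
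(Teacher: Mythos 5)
Your proposal is correct and follows essentially the same route as the paper: split the functional $\int_0^\infty e^{-\lambda_0(s)}h_n(Y(s))\,d\lambda_0(s)$ at time $t$, use Lemma \ref{taushiftn} to see that the shifted, reweighted law $P^{t,H}$ remains in $\Pi^{h_1,\ldots,h_n}$, and invoke Remark \ref{reminit} to identify the conditional expectation of the shifted piece as $e^{-\lambda_0(t)}v^n_{h_n}(Y(t))$, from which the martingale property and \eqref{vdef} follow. The paper works only with deterministic times $t$ and bounded ${\cal F}_t$-measurable weights $H$ (which already suffices), so your extra generality with stopping times $\tau$ and the appeal to Lemma \ref{paste} are unnecessary but harmless.
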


\begin{proof}
For  $t\geq 0$ and $H$ bounded and ${\cal F}_t$-measurable, by Lemma 
\ref{taushiftn} and Remark \ref{reminit}
\begin{eqnarray*}
E[\int_t^{\infty}e^{-\lambda_0(s)}h_n(Y(s)d\lambda_0(s)H]&=&E[e^{
-\lambda_0(t)}\int_0^{\infty}e^{-\lambda_0^t}h_n(Y^t(s)d\lambda_0^
t(s)H]\\
&=&E[e^{-\lambda_0(t)}v^n_{h_n}(Y(t))H],\end{eqnarray*}
and hence 
\[E[\int_t^{\infty}e^{-\lambda_0(s)}h_n(Y(s)d\lambda_0(s)|{\cal F}_
t]=e^{-\lambda_0(t)}v^n_{h_n}(Y(t))\]
and 
\[E[\int_0^{\infty}e^{-\lambda_0(s)}h_n(Y(s)d\lambda_0(s)|{\cal F}_
t]=e^{-\lambda_0t}v^n_{h_n}(Y(t))+\int_0^te^{-\lambda_0(s)}h_n(Y(
s))d\lambda_0(s).\]
The left side is clearly a martingale, and (\ref{vdef}) 
follows by taking $t=0$.
\end{proof}

Recall that we are assuming Condition \ref{cond1}.  In 
particular, we are assuming that for all solutions of the 
controlled martingale problem, $\lambda_0(t)\rightarrow\infty$. 

\begin{theorem}\label{Xmgrel} 
For $\nu\in {\cal P}(F_2)$, let $\Pi^{\infty}_{\nu}\equiv\cap_n\Pi^{
h_1,\ldots ,h_n}_{\nu}$ (note that $\Pi^{\infty}_{\nu}\neq\emptyset$) 
and $\Pi^{\infty}\equiv\cup_{\nu\in {\cal P}(F_2)}\Pi^{\infty}_{\nu}$. 
Let $(Y,\lambda_0,\Lambda_1)$ be a solution of 
the controlled martingale problem with filtration $\{{\cal F}_t\}$ and 
distribution in $\Pi^{\infty}$. Define, as in Theorem \ref{const}, 
$\tau (t)\equiv\inf\{s:\lambda_0(s)>t\}$ and $X(t)\equiv Y(\tau (
t))$.  Then for all $h_n$, 
\[v^n_{h_n}(X(t))-\int_0^t(v^n_{h_n}(X(s))-h_n(X(s)))ds,\]
is a $\{{\cal F}_{\tau (t)}\}$-martingale. 
\end{theorem}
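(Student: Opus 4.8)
The plan is to transfer the martingale identity established for the controlled process $Y$ in Lemma \ref{Ymgrel} over to the time-changed process $X$ via the change of time $\tau(t)$. Recall from Lemma \ref{Ymgrel} that, for a solution with distribution in $\Pi^{h_1,\ldots,h_n}$ (in particular for one in $\Pi^\infty$), the process
\[
N(t)\equiv e^{-\lambda_0(t)}v^n_{h_n}(Y(t))+\int_0^t e^{-\lambda_0(s)}h_n(Y(s))\,d\lambda_0(s)
\]
is an $\{{\cal F}_t\}$-martingale. The key observation is that, under Condition \ref{cond1}(c), $\lambda_0(t)\to\infty$ a.s., so $\tau(t)$ is finite for all $t$ and $\lambda_0(\tau(t))=t$; moreover $\tau(t)$ is an $\{{\cal F}_t\}$-stopping time and $\{{\cal F}_{\tau(t)}\}$ is exactly the filtration in the statement. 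First I would apply the optional sampling theorem to the bounded martingale $N$ at the times $\tau(t)$ to get that $N(\tau(t))$ is an $\{{\cal F}_{\tau(t)}\}$-martingale; here boundedness of $N$ follows from $|v^n_{h_n}|\le\|h_n\|$ (which is clear from the definition of $\gamma$, since $0\le \int_0^\infty e^{-\lambda_0(s)}h_n(Y(s))d\lambda_0(s)\le\|h_n\|$) and $\|h_n\|\le$ a constant, together with $e^{-\lambda_0(s)}\le 1$.

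Next I would rewrite $N(\tau(t))$ in terms of $X$. Since $\lambda_0(\tau(t))=t$, the first term becomes $e^{-t}v^n_{h_n}(X(t))$. For the integral term, the change-of-variables formula for Stieltjes integrals under the time change $s=\tau(r)$, using $d\lambda_0(\tau(r))=dr$ and $Y(\tau(r))=X(r)$ (valid because $\tau$ is the right-continuous inverse of $\lambda_0$ and, as in Theorem \ref{const}, $X(r)\in\bar E_0$ since $\tau(r)$ is a point of increase of $\lambda_0$), gives
\[
\int_0^{\tau(t)} e^{-\lambda_0(s)}h_n(Y(s))\,d\lambda_0(s)=\int_0^t e^{-r}h_n(X(r))\,dr.
\]
Hence $e^{-t}v^n_{h_n}(X(t))+\int_0^t e^{-r}h_n(X(r))\,dr$ is an $\{{\cal F}_{\tau(t)}\}$-martingale. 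Finally, I would undo the exponential weighting: applying the integration-by-parts / product-rule identity, if $e^{-t}\varphi(t)+\int_0^t e^{-r}\psi(r)\,dr$ is a martingale (with $\varphi$ bounded and measurable, $\psi$ bounded), then $\varphi(t)-\int_0^t(\varphi(s)-\psi(s))\,ds$ is a martingale; one checks this by writing $\varphi(t)=e^{t}\big(e^{-t}\varphi(t)+\int_0^t e^{-r}\psi(r)dr\big)-e^{t}\int_0^t e^{-r}\psi(r)dr$ and differentiating the deterministic corrections, using the martingale property of the bracketed term against the $e^t$ factor. With $\varphi=v^n_{h_n}(X(\cdot))$ and $\psi=h_n(X(\cdot))$ this yields exactly that
\[
v^n_{h_n}(X(t))-\int_0^t\big(v^n_{h_n}(X(s))-h_n(X(s))\big)\,ds
\]
is an $\{{\cal F}_{\tau(t)}\}$-martingale.

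The step I expect to be the main obstacle is the time-change / change-of-variables argument: one must be careful that $\tau$ may have jumps (where $\lambda_0$ is flat) and be constant on intervals (where $\lambda_0$ increases), and verify that the Stieltjes integral identity $\int_0^{\tau(t)} g(s)\,d\lambda_0(s)=\int_0^t g(\tau(r))\,dr$ holds in this generality, as well as that $v^n_{h_n}$, which is only Borel (upper semicontinuous for $n=1$), composed with the càdlàg process $X$ still yields an adapted, suitably measurable integrand so that optional sampling and the product-rule manipulation are legitimate. Once the measurability and the time-change bookkeeping are handled, the martingale transfer and the removal of the exponential weight are routine. I would also remark that, for this to make sense, one needs $v^n_{h_n}$ bounded, which as noted follows directly from the definition of $\gamma(\Pi_\nu^{h_1,\ldots,h_{n-1}},h_n)$ and $0\le h_n$, $e^{-\lambda_0(s)}\le1$, $d\lambda_0(s)\le ds$ along with $\lambda_0(\infty)=\infty$.
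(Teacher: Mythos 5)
Your proposal is correct and follows essentially the same route as the paper: start from the martingale of Lemma \ref{Ymgrel}, transfer it through the time change $\tau(t)$ (using $\lambda_0(\tau(t))=t$ and the Stieltjes change of variables) to conclude that $e^{-t}v^n_{h_n}(X(t))+\int_0^t e^{-s}h_n(X(s))\,ds$ is an $\{{\cal F}_{\tau(t)}\}$-martingale, and then strip off the exponential weight. The paper compresses the final step by citing Lemma 4.3.2 of \cite{EK86}, which is exactly the integration-by-parts identity you carry out by hand; your explicit verification of the boundedness of $v^n_{h_n}$ and of the time-change bookkeeping fills in details the paper leaves implicit.
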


\begin{proof}
For each $h_n$, by Lemma \ref{Ymgrel} 
\[e^{-\lambda_0(t)}v^n_{h_n}(Y(t))+\int_0^te^{-\lambda_0(s)}h_n(Y
(s))d\lambda_0(s)\]
is a $\{{\cal F}_t\}$-martingale, so the time changed process 
\[e^{-t}v^n_{h_n}(X(t))+\int_0^te^{-s}h_n(X(s))ds\]
is a $\{{\cal F}_{\tau (t)}\}$-martingale.
 Hence by Lemma 4.3.2 in \cite{EK86}, 
\[v^n_{h_n}(X(t))-\int_0^t(v^n_{h_n}(X(s))-h_n(X(s)))ds,\]
is a $\{{\cal F}_{\tau (t)}\}$-martingale.
\end{proof}

Let ${\cal Q}_0^{\infty}$ be the collection of $\nu\in {\cal P}(\bar {
E}_0)$ such that 
$\nu ={\cal L}(X(0))={\cal L}(Y(\tau (0)))$, for some $(Y,\lambda_
0,\Lambda_1)$ with distribution in $\Pi^{\infty}$  
and $\tau$ and $X$ as in Theorem \ref{Xmgrel}. Then, by Lemma 
\ref{taushiftn}, 
${\cal Q}_0^{\infty}$ is the collection of $\nu\in {\cal P}(\bar {
E}_0)$ 
such that there exists $(Y,\lambda_0,\Lambda_1)$ with distribution in $
\Pi^{\infty}_{\nu}$ 
for which $\lambda_0(t)>0$ for all $t>0$ a.s. Note that 
${\cal Q}_0^{\infty}\supset {\cal P}(E_0)$. In particular $\delta_
x\in {\cal Q}_0^{\infty}$ for every $x\in E_0$.

\begin{theorem}\label{uniqueness} 
Let $\{h_n\}\subset C(\bar {E}_0)$ be such that its linear span is dense in $
B(\bar {E}_0)$ 
under bounded pointwise convergence.  For $\nu\in {\cal Q}_0^{\infty}$, let 
$\Gamma^{\infty}_{\nu}$ be the collection of distributions of processes 
$X\equiv Y\circ\tau$ defined as in Theorem \ref{Xmgrel} with 
$\nu ={\cal L}(X(0))$ and $(Y,\lambda_0,\Lambda_1)$ with distribution in $
\Pi^{\infty}$.
Then, there exists one and only one distribution in $\Gamma^{\infty}_{
\nu}$ and it is 
the distribution of a strong Markov process.  
\end{theorem}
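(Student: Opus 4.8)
The plan is to follow the classical Krylov–Stroock–Varadhan Markov selection argument as adapted in Section 4.5 of \cite{EK86}, but transported through the time-change construction to the constrained martingale problem. First I would establish \emph{uniqueness} of the distribution in $\Gamma^{\infty}_{\nu}$ for $\nu = \delta_x$, $x \in E_0$. For this, recall from Theorem \ref{Xmgrel} that for every solution $(Y,\lambda_0,\Lambda_1)$ with distribution in $\Pi^\infty$ and every $h_n$, the process
\[
v^n_{h_n}(X(t)) - \int_0^t \bigl(v^n_{h_n}(X(s)) - h_n(X(s))\bigr)\,ds
\]
is a $\{\mathcal{F}_{\tau(t)}\}$-martingale, so that $v^n_{h_n}(X(0)) = v^n_{h_n}(x)$ equals the resolvent-type functional $E^X\bigl[\int_0^\infty e^{-t}h_n(X(t))\,dt\bigr]$. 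Because this holds simultaneously for all $n$ and the linear span of $\{h_n\}$ is dense in $B(\bar E_0)$ under bounded pointwise convergence, the one-dimensional marginals $E^X[g(X(t))]$ are determined for all $g$ in that span — hence for all bounded measurable $g$ — by the fixed functions $v^n_{h_n}$. The key point to spell out is that knowing $E^X\bigl[\int_0^\infty e^{-\alpha t}g(X(t))\,dt\bigr]$ for a single $\alpha$ (here $\alpha = 1$) and all $g$ in a separating class pins down the resolvent, and by standard arguments (uniqueness of Laplace transforms, the Markov property established below) this forces the finite-dimensional distributions to coincide.

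Next I would prove the \emph{strong Markov property} by a dynamic-programming / shifting argument. The crucial closure fact is Lemma \ref{taushiftn}: $\Pi^{h_1,\dots,h_n}$ is stable under the shift-and-recondition operation and under the pasting operation of Lemma \ref{paste}; intersecting over $n$, $\Pi^\infty$ inherits both properties. Given a solution with distribution $P \in \Pi^\infty_{\delta_x}$ and an $\{\mathcal{F}_{\tau(t)}\}$-stopping time $\sigma$, the shifted triple $(Y^{\tau(\sigma)}, \lambda_0^{\tau(\sigma)}, \Lambda_1^{\tau(\sigma)})$, suitably reconditioned, again has distribution in $\Pi^\infty$, with initial distribution the law of $X(\sigma) = Y(\tau(\sigma))$; by the uniqueness just established (applied pointwise in the starting point and then integrated against the conditional law of $X(\sigma)$, using the convexity and measurable-selection machinery behind Lemma \ref{uexistn}), the conditional law of $X(\sigma + \cdot)$ given $\mathcal{F}_{\tau(\sigma)}$ depends only on $X(\sigma)$ and coincides with the canonical law started from $X(\sigma)$. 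Conversely, the pasting stability guarantees that this common conditional law is itself realized as a natural solution, so $\Gamma^\infty_{\delta_x}$ is a singleton and the unique element is strong Markov. One then extends from $\delta_x$ to general $\nu \in \mathcal{Q}_0^\infty$ by Lemma \ref{chginit} (conditioning on $Y(0)$) together with the representation $v^n_{h_n}(x) = E^P[\int_0^\infty e^{-\lambda_0(s)}h_n(Y(s))\,d\lambda_0(s)]$ from Remark \ref{reminit}, mixing the point-mass solutions over $\nu$.

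The main obstacle, as the authors themselves flag in the discussion preceding Lemma \ref{uexistn}, is that $\Pi^{h_1,\dots,h_n}$ is \emph{not known to be compact} — only each fiber $\Pi^{h_1,\dots,h_n}_\nu$ is compact. This blocks the naive appeal to Lemma 4.5.9 of \cite{EK86} for upper semicontinuity of $\nu \mapsto \gamma(\Pi^{h_1,\dots,h_n}_\nu, g)$, which is why Lemma \ref{uexistn} had to build $v^{n+1}_g$ by a delicate directional-derivative argument rather than by a direct selection. In the present proof the analogous difficulty surfaces when one wants the conditional law of $X(\sigma+\cdot)$ to be a \emph{measurable} function of $X(\sigma)$ lying in $\Gamma^\infty$: I must show the pasted process stays in $\Pi^\infty$ and that the selection $x \mapsto$ (unique law in $\Gamma^\infty_{\delta_x}$) is universally measurable. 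I expect to handle this exactly as on pages 214–217 of \cite{EK86}: use the functions $v^n_{h_n} \in B(F_2)$ (which are genuine, not just selections) as the bookkeeping devices, verify that the martingale identities of Lemma \ref{Ymgrel} characterize membership in $\Pi^\infty$ in a way that is preserved under pasting (Lemma \ref{taushiftn}) and under mixing over the initial point (Lemma \ref{chginit}), and invoke the measurability of $x \mapsto v^n_{h_n}(x)$ plus the fact that finitely many such functionals separate points of $\Gamma^\infty_{\delta_x}$ to get a measurable selection. The residual technical care is that the time change $\tau$ is only known to be finite (Condition \ref{cond1}(c) gives $\lambda_0(t)\to\infty$) and possibly discontinuous, so one must track $\{\mathcal{F}_{\tau(t)}\}$-stopping times and their images under $\tau$ carefully when applying the optional-sampling and pasting lemmas; this is bookkeeping rather than a genuine obstruction.
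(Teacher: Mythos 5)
Your second half (the strong Markov property via shift--recondition--paste closure of $\Pi^{\infty}$ and an appeal to the uniqueness already proved) matches the paper's argument in substance, and your worry about measurable selection is actually unnecessary: the paper sidesteps it by fixing $F\in{\cal B}_{\tau(\sigma)}$ and comparing the two measures $P_1(C)=\frac 1{P(F)}E^P[{\bf 1}_F{\bf 1}_C]$ and $P_2(C)=\frac 1{P(F)}E^P[{\bf 1}_FE[{\bf 1}_C|Y(\tau(\sigma))]]$, showing both induce laws of $X^{\sigma}$ in $\Gamma^{\infty}_{\mu}$ and invoking uniqueness -- no selection of a kernel is ever needed. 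The genuine gap is in your first half. From Theorem \ref{Xmgrel} and Remark \ref{reminit} you only extract the identity $v^n_{h_n}(x)=E\left[\int_0^{\infty}e^{-s}h_n(X(s))\,ds\right]$, i.e.\ the value of the Laplace transform of $s\mapsto E[h(X(s))]$ at the \emph{single} point $\eta=1$. Knowing this for a bp-dense class of $h$ does not determine the one-dimensional marginals $E[g(X(t))]$: uniqueness of Laplace transforms requires the transform at all (or infinitely many) $\eta>0$, and the resolvent identity that would propagate from $\eta=1$ to other $\eta$ presupposes the semigroup/Markov property -- which in your outline is ``established below'' using the very uniqueness you are trying to prove. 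As written the argument is circular, and the step ``knowing the resolvent at a single $\alpha$ pins down the resolvent'' is exactly the point that needs a proof and does not follow from what you have assembled.

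The paper closes this gap by promoting the pairs to an operator: it sets
\[\mathbb A=\{(\left.v_h\right|_{\bar {E}_0},\left.v_h\right|_{\bar {E}_0}-h):(v_h,h)\mbox{ satisfies (\ref{reso}) and (\ref{mgprop})}\},\]
observes that $\mathbb A$ is linear and bp-closed, and uses the martingale property (\ref{mgprop}) together with Lemma 4.3.2 of \cite{EK86} to derive the $\eta$-resolvent identity (\ref{diss}) for \emph{every} $\eta>0$, whence $\mathbb A$ is dissipative as in Proposition 4.3.5 of \cite{EK86}. Since ${\cal R}(I-\mathbb A)\supset\{h_n\}$ and the span of $\{h_n\}$ is bp-dense, Lemma 1.2.3 of \cite{EK86} upgrades this to $\overline {{\cal R}(\eta I-\mathbb A)}^{bp}=B(\bar {E}_0)$ for all $\eta>0$, and Corollary 4.4.4 of \cite{EK86} then yields uniqueness (of one-dimensional and hence finite-dimensional distributions) for the martingale problem for $\mathbb A$ with initial law $\nu$ -- every element of $\Gamma^{\infty}_{\nu}$ solves that martingale problem by construction. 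This dissipativity-plus-range argument is the missing engine in your proposal; without it the density of $\{h_n\}$ buys you nothing beyond the $\eta=1$ identity. A secondary, fixable issue: you restrict to $\nu=\delta_x$ with $x\in E_0$ and propose to recover general $\nu\in{\cal Q}_0^{\infty}$ by mixing, but ${\cal Q}_0^{\infty}$ is not exhausted by mixtures of such point masses (it contains $\delta_x$ for suitable boundary points $x$); the paper's argument via $\mathbb A$ treats arbitrary $\nu\in{\cal Q}_0^{\infty}$ uniformly and needs no such reduction.
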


\begin{proof}
By Remark \ref{reminit} and Theorem \ref{Xmgrel}, 
for each $n$, $(v_{h_n}^n,h_n)$ is a pair$(v_h,h)$ such that 
\begin{equation}v_h(Y(0))=E^P[\int_0^{\infty}e^{-\lambda_0(s)}h(Y
(s))d\lambda_0(s)|{\cal B}_0],\quad\forall P\in\Pi^{\infty},\label{reso}\end{equation}
and 
\begin{equation}v_h(X(t))-\int_0^t[v_h(X(s))-h(X(s)]ds\label{mgprop}\end{equation}
is a $\{{\cal B}_{\tau (t)}\}$-martingale for each $X=Y\circ\tau$, $
(Y,\lambda_0,\Lambda_1)$ with 
distribution in $\Pi^{\infty}$. 
Let 
\[\mathbb A=\{(\left.v_h\right|_{\bar {E}_0},\left.v_h\right|_{\bar {
E}_0}-h):\mbox{\rm such that }(v_h,h)\in B(F_2)\times B(\bar {E}_
0)\mbox{\rm \ satisfies (\ref{reso}) and (\ref{mgprop}}\}.\]
$\mathbb A$ is linear and closed under bounded pointwise convergence. 

For $(v_h,h)$ such that $(\left.v_h\right|_{\bar {E}_0},\left.v_h\right
|_{\bar {E}_0}-h)\in\mathbb A$, 
by Lemma 4.3.2 of \cite{EK86}, for each $\eta >0$ and 
$X=Y\circ\tau$ as in $(\ref{mgprop})$, 
\[e^{-\eta t}v_h(X(t))+\begin{array}{c}
\end{array}
\int_0^te^{-\eta s}(\eta v_h(X(s))-v_h(X(s))+h(X(s)))ds\]
is a $\{{\cal B}_{\tau (t)}\}$-martingale, and hence
\begin{equation}v_h(X(0))=E[\int_0^{\infty}e^{-\eta s}(\eta v_h(X
(s))-v_h(X(s))+h(X(s)))ds|{\cal B}_{\tau (0)}].\label{diss}\end{equation}
$(\ref{diss})$ with $\eta =1$ and $(\ref{reso})$ imply  
\[v_h(Y(0))=E[v_h(X(0))|{\cal B}_0].\]
Consequently, for each $x\in\bar {E}_0$, for $(Y,\lambda_0,\Lambda_
1)$ with distribution in $\Pi^{\infty}_{\delta_x}$, 
$X=Y\circ\tau$, 
\[v_h(x)=E[\int_0^{\infty}e^{-\eta s}(\eta v_h(X(s))-v_h(X(s))+h(
X(s)))ds],\]
and, as in Proposition 4.3.5 of \cite{EK86}, this implies 
that $\mathbb A$ is dissipative.

Since ${\cal R}(I-\mathbb A)\supset \{h_n\}$ and 
the linear span of $\{h_n\}$ is bounded pointwise dense in 
$B(\bar {E}_0)$, we have $\overline {{\cal R}(I-\mathbb A)}^{bp}=
B(\bar {E}_0)$. 
The properties of resolvents of dissipative 
operators (for example, Lemma 1.2.3 of \cite{EK86}) 
ensure that $\overline {{\cal R}(\eta I-\mathbb A)}^{bp}=B(\bar {
E}_0)$ for all $\eta >0$. 
Therefore, by Corollary 4.4.4 of \cite{EK86}, 
for each $\nu\in {\cal Q}_0^{\infty}$ uniqueness holds for 
the martingale problem for $\mathbb A$ with 
initial distribution $\nu$, and, by construction, 
the distribution of the solution is the 
unique distribution in $\Gamma^{\infty}_{\nu}$. 

Now let $(Y,\lambda_0,\Lambda_1)$ be the canonical process with 
distribution $P\in\Pi^{\infty}$ such that ${\cal L}(Y(\tau (0)))$ $=
\nu$, so that the 
distribution of 
$X\equiv Y\circ\tau$, defined as in Theorem \ref{Xmgrel}, is the 
unique distribution in $\Gamma^{\infty}_{\nu}$. In order to show that $
X$ is a 
strong Markov process we need to show that, for each 
$\{{\cal B}_{\tau (t)}\}$ finite stopping time $\sigma$, 
$\tau (\sigma )$ is a $\{{\cal B}_t\}$-stopping time and, setting $
X^{\sigma}(\cdot )=X(\sigma +\cdot )$, for every $F\in {\cal B}_{
\tau (\sigma )}$, 
\begin{equation}E^P[{\bf 1}_F{\bf 1}_B(X^{\sigma})]=E^P[{\bf 1}_F
E[{\bf 1}_B(X^{\sigma})|X(\sigma )]],\qquad\forall B\in {\cal B}(
{\cal D}_{\bar {E}_0}[0,\infty )).\label{stM}\end{equation}

The fact that $\tau (\sigma )$ is a $\{{\cal B}_t\}$-stopping time follows by the 
right continuity of $\{{\cal B}_t\}$ and the observation that 
\[\{\tau (\sigma )<s\}=\cup_{t\in {\mathbb Q}\cap [0,\infty )}\{\sigma
\leq t\}\cap \{\tau (t)<s\},\qquad s>0.\]
Fix $F\in {\cal B}_{\tau (\sigma )}$ with $P(F)>0$, and define two probability measures $
P_1$ and $P_2$ on 
$D_E[0,\infty )\times C_{[0,\infty )}[0,\infty )\times {\cal L}_U$ by 
\[P_1(C)\equiv\frac 1{P(F)}E^P[{\bf 1}_F{\bf 1}_C],\quad P_2(C)\equiv\frac 
1{P(F)}E^P[{\bf 1}_FE[{\bf 1}_C|Y(\tau (\sigma ))]].\]
Note that 
\[{\cal L}^{P_1}(X^{\sigma}(0))={\cal L}^{P_1}(Y(\tau (\sigma ))=
{\cal L}^{P_2}(Y(\tau (\sigma ))={\cal L}^{P_2}(X^{\sigma}(0))\equiv
\mu .\]
Since 
\[X^{\sigma}(t)=Y^{\tau (\sigma )}(\tau^{\sigma}(t)),\]
where $\tau^{\sigma}$ is given by  
\[\tau^{\sigma}(t)\equiv\inf\{s:\lambda_0^{\tau (\sigma )}(s)>t\}
,\]
and $(Y^{\tau (\sigma )},\lambda_0^{\tau (\sigma )},\Lambda_1^{\tau 
(\sigma )})$ is defined as in $(\ref{shift})$, Lemma 
\ref{taushiftn} yields that ${\cal L}^{P_1}(X^{\sigma})\in\Gamma^{
\infty}_{\mu}$. On the 
other hand ${\cal L}^{P_2}(Y^{\tau (\sigma )},\lambda_0^{\tau (\sigma 
)},\Lambda^{\tau (\sigma )}_1)\in\Pi$ by the optional 
sampling theorem. Moreover, for each $n$,  
\begin{eqnarray*}
&&E^{P_2}[\int_0^{\infty}e^{-\lambda_0^{\tau (\sigma )}(s)}h_n(Y^{
\tau (\sigma )}(s))d\lambda_0^{\tau (\sigma )}(s)]\\
&&=\frac 1{P(F)}E^P[{\bf 1}_FE[\int_0^{\infty}e^{-\lambda_0^{\tau 
(\sigma )}(s)}h_n(Y^{\tau (\sigma )}(s))d\lambda_0^{\tau (\sigma 
)}(s)|Y(\tau (\sigma ))]]\\
&&=\frac 1{P(F)}E^P[E[{\bf 1}_F|Y(\tau (\sigma ))]E[\int_0^{\infty}
e^{-\lambda_0^{\tau (\sigma )}(s)}h_n(Y^{\tau (\sigma )}(s))d\lambda_
0^{\tau (\sigma )}(s)|Y(\tau (\sigma ))]]\\
&&=\frac 1{P(F)}E^P[E[{\bf 1}_F|Y(\tau (\sigma ))]\int_0^{\infty}
e^{-\lambda_0^{\tau (\sigma )}(s)}h_n(Y^{\tau (\sigma )}(s))d\lambda_
0^{\tau (\sigma )}(s)]\\
&&=\gamma (\Pi_{\mu}^{h_1,...,h_{n-1}},h_n),\end{eqnarray*}
where the last equality follows from Lemma 
\ref{taushiftn}. Therefore the distribution of 
$(Y^{\tau (\sigma )},\lambda_0^{\tau (\sigma )},\Lambda_1^{\tau (
\sigma )})$ under $P_2$ belongs to $\Pi^{\infty}$, so that 
${\cal L}^{P_2}(X^{\sigma})\in\Gamma^{\infty}_{\mu}$. Then, by uniqueness of the distribution in 
$\Gamma^{\infty}_{\mu}$, it must hold ${\cal L}^{P_1}(X^{\sigma})
={\cal L}^{P_2}(X^{\sigma})$, which gives 
$(\ref{stM})$. 
\end{proof}\

\begin{remark}
The process constructed in Theorem 
\ref{uniqueness} may not be a solution of the 
constrained (local) martingale problem because 
$(\ref{mgp2})$ is not necessarily a (local) martingale for 
all $f\in {\cal D}$.  However it is, by construction, a solution of 
the martingale problem for $\mathbb A$.  Note that 
${\cal D}(\mathbb A)\supset \{f\vert_{\bar {E}_0}:\,f\in {\cal D}\mbox{\rm \ and }
Bf(x,u)=0,\,\forall (x,u)\in\Xi\cap\partial E_0\}$.  
\end{remark}

\begin{lemma}\label{nonemp}
Let $\nu\in {\cal P}(\bar {E}_0)$.  Suppose  every solution $(Y,\lambda_
0,\Lambda_1)$ 
of the controlled martingale problem for $(A,E_0,B,\Xi )$ with 
initial distribution $\nu$ satisfies $\lambda_0(t)>0$ for all $t>
0$ a.s..  
Then, for every choice of the $\{h_n\}$ in Theorem 
\ref{uniqueness}, $\nu\in {\cal Q}^{\infty}_0$. $ $
\end{lemma}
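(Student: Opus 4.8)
The plan is to read the statement off the reformulation of ${\cal Q}_0^{\infty}$ recorded immediately after Theorem \ref{Xmgrel}: via Lemma \ref{taushiftn}, one has $\nu\in{\cal Q}_0^{\infty}$ precisely when there exists a solution $(Y,\lambda_0,\Lambda_1)$ of the controlled martingale problem whose distribution lies in $\Pi^{\infty}_{\nu}$ and for which $\lambda_0(t)>0$ for all $t>0$ a.s. Hence it suffices to exhibit one such solution.

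First I would check that $\Pi^{\infty}_{\nu}$ is nonempty. Since Condition \ref{cond1} is in force, part (b) gives $\bar{E}_0\subset F_2$, so $\nu\in{\cal P}(\bar{E}_0)\subset{\cal P}(F_2)$ and $\Pi_{\nu}\neq\emptyset$. The inductive construction preceding Lemma \ref{uexistn} then shows that each $\Pi^{h_1,\ldots,h_n}_{\nu}$ is nonempty and compact, and these sets decrease with $n$; by Cantor's intersection theorem their intersection $\Pi^{\infty}_{\nu}$ is nonempty (this is the parenthetical assertion already quoted in Theorem \ref{Xmgrel}).

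Next I would pick any $P\in\Pi^{\infty}_{\nu}$. Because $\Pi^{\infty}_{\nu}\subset\Pi^{h_1}_{\nu}\subset\Pi_{\nu}$, the canonical triple $(Y,\lambda_0,\Lambda_1)$ under $P$ is a solution of the controlled martingale problem for $(A,E_0,B,\Xi)$ with $Y(0)$ distributed as $\nu$. The hypothesis of the lemma then applies to this solution and yields $\lambda_0(t)>0$ for all $t>0$ a.s.\ under $P$. By the reformulation of ${\cal Q}_0^{\infty}$ recalled in the first paragraph, this gives $\nu\in{\cal Q}_0^{\infty}$. The argument uses no special feature of the sequence $\{h_n\}$ beyond its being one of the sequences admissible in Theorem \ref{uniqueness}, so the conclusion holds for every such choice.

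I do not anticipate a genuine obstacle: the substance of the lemma is already contained in the equivalence established after Theorem \ref{Xmgrel} (through Lemma \ref{taushiftn}), together with the inclusion $\Pi^{\infty}_{\nu}\subset\Pi_{\nu}$. The only point deserving a word of care is the nonemptiness of $\Pi^{\infty}_{\nu}$, which is the standard nested-nonempty-compacts argument for the sets $\Pi^{h_1,\ldots,h_n}_{\nu}$.
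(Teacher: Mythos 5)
Your proposal is correct and follows essentially the same route as the paper, whose entire proof is the one-line observation that any $(Y,\lambda_0,\Lambda_1)$ with distribution in $\Pi^{\infty}_{\nu}$ satisfies $\tau(0)=0$ by the hypothesis, so that $X(0)=Y(0)$ has law $\nu$. The extra details you supply (nonemptiness of $\Pi^{\infty}_{\nu}$ via nested compacts, and the reformulation of ${\cal Q}_0^{\infty}$ recorded after Theorem \ref{Xmgrel}) are exactly the facts the paper relies on implicitly.
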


\begin{proof}
If $(Y,\lambda_0,\Lambda_1)$ has distribution in $\Pi^{\infty}_{\nu}$, then $
\tau (0)=0$.
\end{proof}

\begin{corollary}\label{M-const} 
\item[a)] Let $\nu\in {\cal P}(\bar {E}_0)$. If every solution $(
Y,\lambda_0,\Lambda_1)$ of the controlled 
martingale problem for $(A,E_0,B,\Xi )$ with 
initial distribution $\nu$ satisfies the conditions 
of Lemma \ref{nonemp}\ and
Lemma \ref{invert}, then there exists a strong Markov, 
natural solution to the constrained martingale 
problem for $(A,E_0,B,\Xi )$ with initial distribution $\nu$.  

\item[b)] Let $\nu\in {\cal P}(\bar {E}_0)$. If $\lambda_0$ is a.s. strictly increasing for 
every solution 
of the controlled martingale problem for $(A,E_0,B,\Xi )$ 
with initial distribution $\nu$ (see Lemma 
\ref{increasing} for a sufficient condition), then 
there exists a strong Markov, natural 
solution to the constrained local martingale problem 
for $(A,E_0,B,\Xi )$ with initial distribution $\nu$.  
\end{corollary}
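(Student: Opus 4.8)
The plan is to deduce both parts from Theorem \ref{uniqueness} by checking, under the stated hypotheses, that (i) $\nu$ belongs to ${\cal Q}_0^\infty$ so that Theorem \ref{uniqueness} applies, and (ii) the unique element $X=Y\circ\tau$ of $\Gamma_\nu^\infty$ it produces is not merely a strong Markov solution of the martingale problem for $\mathbb A$ but an actual (local) solution of the constrained martingale problem, the upgrade being supplied by Theorem \ref{const}. Throughout, I would fix once and for all a sequence $\{h_n\}\subset C(\bar E_0)$ whose linear span is bounded pointwise dense in $B(\bar E_0)$; by Lemma \ref{nonemp} the choice is immaterial.

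First I would verify that $\nu\in{\cal Q}_0^\infty$. In case (a) the hypothesis includes the condition of Lemma \ref{nonemp}; in case (b) an a.s.\ strictly increasing $\lambda_0$ with $\lambda_0(0)=0$ satisfies $\lambda_0(t)>0$ for all $t>0$ a.s., so the condition of Lemma \ref{nonemp} again holds. Hence Lemma \ref{nonemp} gives $\nu\in{\cal Q}_0^\infty$. Since $\nu\in{\cal P}(\bar E_0)\subset{\cal P}(F_2)$ by Condition \ref{cond1}(b), $\Pi_\nu^\infty\neq\emptyset$ (as noted in Theorem \ref{Xmgrel}); choosing $(Y,\lambda_0,\Lambda_1)$ with distribution in $\Pi_\nu^\infty$, we have ${\cal L}(Y(0))=\nu$, and, because $\lambda_0(t)>0$ for $t>0$, $\tau(0)=0$, so $X(0)=Y(\tau(0))=Y(0)$ has distribution $\nu$. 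By Theorem \ref{uniqueness}, $\Gamma_\nu^\infty$ consists of the single distribution ${\cal L}(X)$ of $X=Y\circ\tau$, and $X$ is strong Markov; it is natural by construction.

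It then remains to show that $(\ref{mgp2})$ is a $\{{\cal G}_t\}$-(local) martingale for every $f\in{\cal D}$, where ${\cal G}_t={\cal F}_{\tau(t)}$ as in Theorem \ref{const}; this is exactly where the two parts separate. For (a): since $(Y,\lambda_0,\Lambda_1)$ has initial distribution $\nu$ it satisfies the hypotheses of Lemma \ref{invert}, so $E[\tau(t)]<\infty$ for all $t\ge0$; taking $\eta_n=n$ in Theorem \ref{const} and arguing as in the proof of Corollary \ref{const-exist}(a), $(\ref{mgp2})$ is a genuine martingale for each $f\in{\cal D}$, so $X$ is a strong Markov, natural solution of the constrained martingale problem with initial distribution $\nu$. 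For (b): since $\lambda_0$ is a.s.\ strictly increasing, $\tau$ is continuous, so $\eta_n\equiv\inf\{t:\tau(t)>n\}$ are $\{{\cal G}_t\}$-stopping times with $\eta_n\uparrow\infty$ and $\tau(\eta_n)=n$; Theorem \ref{const} then gives that $(\ref{mgp2})$ is a $\{{\cal G}_t\}$-local martingale for each $f\in{\cal D}$, so $X$ is a strong Markov, natural solution of the constrained local martingale problem with initial distribution $\nu$.

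I do not expect a serious obstacle: the argument is essentially a matter of threading together Theorem \ref{uniqueness}, Lemma \ref{nonemp}, Corollary \ref{const-exist} and Theorem \ref{const}. The two points that must be watched are that the triple $(Y,\lambda_0,\Lambda_1)$ underlying the selected $X$ should itself have initial distribution $\nu$ (which is why one needs $\lambda_0(t)>0$ for $t>0$, forcing $X(0)=Y(0)$, rather than merely $\nu\in{\cal Q}_0^\infty$), and that Theorem \ref{uniqueness} on its own only produces a solution of the martingale problem for $\mathbb A$, not of the constrained martingale problem (see the remark following Theorem \ref{uniqueness}); the conditions of Lemma \ref{invert} in (a), respectively the strict monotonicity of $\lambda_0$ in (b), are precisely what is needed to close this gap through Theorem \ref{const}.
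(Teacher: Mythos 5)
Your proposal is correct and follows the same route as the paper, which simply invokes Lemma \ref{nonemp} to get $\nu\in{\cal Q}_0^{\infty}$ and then appeals to Theorem \ref{uniqueness} together with the arguments of Corollary \ref{const-exist}. You have merely spelled out the details the paper leaves implicit, including the key point that the Lemma \ref{invert} hypothesis (resp.\ strict monotonicity of $\lambda_0$) is what upgrades the selected strong Markov process from a solution of the martingale problem for $\mathbb A$ to an actual (local) solution of the constrained martingale problem via Theorem \ref{const}.
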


\begin{proof}
By Lemma \ref{nonemp}, $\nu\in {\cal Q}_0^{\infty}$, and the 
assertion follows immediately from Theorem 
\ref{uniqueness} by the same arguments as in Corollary 
\ref{const-exist}.  
\end{proof}

\begin{corollary}\label{Muniq}
Assume Condition \ref{cond1}. 
Let $\nu\in {\cal P}(\bar {E}_0)$. If every solution $(Y,\lambda_
0,\Lambda_1)$ of the controlled 
martingale problem for $(A,E_0,B,\Xi )$ with 
initial distribution $\nu\in {\cal P}(\bar {E}_0)$ satisfies the conditions 
of Lemma \ref{nonemp}\ and
there is a unique (in distribution) 
strong Markov process $X=Y\circ\tau$ with initial distribution $\nu$ 
that can be obtained from a 
solution of the 
controlled martingale problem as in Theorem \ref{const}, 
then there is a unique (in distribution) 
process that can be obtained in this way. 

In particular, under either condition a) or b) of Corollary 
\ref{M-const}, if there is a unique strong Markov, natural 
solution of the constrained (local) martingale problem with 
initial distribution $\nu$, then there exists a unique natural 
solution. 
\end{corollary}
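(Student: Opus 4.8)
\textbf{Proof proposal for Corollary \ref{Muniq}.}

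The plan is to show that, under Condition \ref{cond1} and the hypothesis that every solution of the controlled martingale problem with initial distribution $\nu$ has $\lambda_0(t)>0$ for all $t>0$ a.s., the uniqueness of a strong Markov natural solution forces $\Gamma^{\infty}_{\nu}$ to coincide with $\Gamma_{\nu}$, the full set of natural solutions with initial distribution $\nu$. By Lemma \ref{nonemp}, the hypothesis guarantees $\nu\in {\cal Q}^{\infty}_0$, so Theorem \ref{uniqueness} gives exactly one distribution in $\Gamma^{\infty}_{\nu}$, and it is strong Markov. Thus the assumed uniqueness of a strong Markov process obtainable via Theorem \ref{const} identifies that strong Markov process with the element of $\Gamma^{\infty}_{\nu}$. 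The task is then to show that \emph{any} natural solution with initial distribution $\nu$ — i.e.\ any $X=Y\circ\tau$ with $(Y,\lambda_0,\Lambda_1)$ a solution of the controlled martingale problem with ${\cal L}(Y(\tau(0)))=\nu$ — in fact has distribution in $\Gamma^{\infty}_{\nu}$, hence equals the unique strong Markov one.

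The key step is the following observation: starting from any solution $(Y,\lambda_0,\Lambda_1)\in\Pi_{\nu}$, one can successively pass to solutions that maximize the functionals $\gamma(\Pi^{h_1,\ldots,h_{k}}_{\nu},h_{k+1})$ without changing the distribution of the time-changed process $X$. Concretely, suppose $(Y,\lambda_0,\Lambda_1)$ has distribution in $\Pi^{h_1,\ldots,h_n}_{\nu}$ but not in $\Pi^{h_1,\ldots,h_n,h_{n+1}}_{\nu}$; I want to argue that nonetheless the original $X$ already has distribution in $\Gamma^{\infty}_{\nu}$ — equivalently, I should argue that passing from $\Pi^{h_1,\ldots,h_n}_{\nu}$ to $\Pi^{h_1,\ldots,h_n,h_{n+1}}_{\nu}$ does not alter ${\cal L}(X)$. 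The mechanism is Theorem \ref{Xmgrel} together with Remark \ref{reminit}: once ${\cal L}(Y)\in\Pi^{h_1,\ldots,h_n}_{\nu}$, one has $v^n_{h_n}(Y(0))=E[\int_0^\infty e^{-\lambda_0(s)}h_n(Y(s))d\lambda_0(s)\,|\,{\cal B}_0]$ and the time-change identity shows $v^n_{h_n}(X(t))-\int_0^t(v^n_{h_n}(X(s))-h_n(X(s)))ds$ is a $\{{\cal B}_{\tau(t)}\}$-martingale for the associated $X$; but this is precisely saying $(v^n_{h_n}|_{\bar E_0},v^n_{h_n}|_{\bar E_0}-h_n)\in\mathbb A$, so the argument in the proof of Theorem \ref{uniqueness} showing $\mathbb A$ is a dissipative operator with $\overline{{\cal R}(\eta I-\mathbb A)}^{bp}=B(\bar E_0)$ applies verbatim, and Corollary 4.4.4 of \cite{EK86} gives uniqueness of the martingale problem for $\mathbb A$ with initial distribution $\nu$. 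Hence every natural $X$ with initial distribution $\nu$ solves the same well-posed $\mathbb A$-martingale problem, so all such $X$ have the same distribution, which is the unique one in $\Gamma^{\infty}_{\nu}$.

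Therefore the chain of reasoning is: (1) Lemma \ref{nonemp} gives $\nu\in{\cal Q}^{\infty}_0$; (2) Theorem \ref{uniqueness} gives a unique, strong Markov, element of $\Gamma^{\infty}_{\nu}$; (3) the hypothesis that this strong Markov process is the only strong Markov process obtainable as in Theorem \ref{const} matches it with the corollary's object; (4) the dissipativity/well-posedness argument of Theorem \ref{uniqueness} — which requires only that $(v^n_{h_n}|_{\bar E_0},v^n_{h_n}|_{\bar E_0}-h_n)\in\mathbb A$, and this holds for the $v^n_{h_n}$ attached to \emph{any} solution in $\Pi^{h_1,\ldots,h_n}_{\nu}$ — actually shows uniqueness for the $\mathbb A$-martingale problem, hence every natural solution with initial distribution $\nu$ coincides in law with the element of $\Gamma^{\infty}_{\nu}$. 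For the final ``in particular'' sentence, one notes that under condition a) or b) of Corollary \ref{M-const} every natural (local) solution arises via Theorem \ref{const} from a controlled solution with the relevant property, so uniqueness among strong Markov natural solutions is exactly the hypothesis just used, and the conclusion is uniqueness among all natural solutions.

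The main obstacle I anticipate is step (4): making precise that the dissipativity argument, which in Theorem \ref{uniqueness} was run for solutions in $\Pi^{\infty}$, in fact only uses membership in the finite-stage sets $\Pi^{h_1,\ldots,h_n}_{\nu}$ through the relations \eqref{reso} and \eqref{mgprop} for the pair $(v^n_{h_n},h_n)$, and that $\mathbb A$ contains enough such pairs — with $\overline{{\cal R}(I-\mathbb A)}^{bp}=B(\bar E_0)$ — to invoke Corollary 4.4.4 of \cite{EK86}. One must check that for an arbitrary natural solution $X=Y\circ\tau$ with ${\cal L}(Y(\tau(0)))=\nu$, the process $Y$ (or a version of it constructed via Lemma \ref{taushiftn} and the pasting in Lemma \ref{paste}) can be taken with distribution in $\Pi^{\infty}_{\nu}$ without altering ${\cal L}(X)$; this is where the closure properties of $\Pi^{h_1,\ldots,h_n}$ under the pasting operation, established in Lemma \ref{taushiftn}, do the real work, exactly as they did in the strong Markov verification inside the proof of Theorem \ref{uniqueness}.
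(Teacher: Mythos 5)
Your strategy has a genuine gap at exactly the point you flag as step (4), and it is not repairable along the lines you sketch. You want to show that an arbitrary natural solution $X=Y\circ\tau$ with ${\cal L}(X(0))=\nu$ solves the martingale problem for $\mathbb A$ and then invoke the well-posedness of that problem. But the martingale property $(\ref{mgprop})$ for the pair $(v^n_{h_n},h_n)$ is established (via Lemma \ref{Ymgrel} and Theorem \ref{Xmgrel}) only for $X$ arising from controlled solutions whose distribution lies in $\Pi^{h_1,\ldots ,h_n}$; for a generic $P\in\Pi_{\nu}$ the process $e^{-\lambda_0(t)}v^n_{h_n}(Y(t))+\int_0^te^{-\lambda_0(s)}h_n(Y(s))d\lambda_0(s)$ need not be a martingale (it is at best a supermartingale), so a generic natural $X$ is simply not a solution of the $\mathbb A$-martingale problem, and the well-posedness of that problem says nothing about it. Your auxiliary claim that ``passing from $\Pi^{h_1,\ldots ,h_n}_{\nu}$ to $\Pi^{h_1,\ldots ,h_n,h_{n+1}}_{\nu}$ does not alter ${\cal L}(X)$'' is essentially equivalent to the conclusion you are trying to prove and is given no independent justification; the closure properties of Lemma \ref{taushiftn} do not yield it. A telltale sign that something is wrong: your argument never genuinely uses the hypothesis that the strong Markov process obtainable as in Theorem \ref{const} is unique --- the well-posedness of the $\mathbb A$-martingale problem was proved unconditionally in Theorem \ref{uniqueness} --- so if your reasoning were correct it would show that natural solutions are \emph{always} unique, which is far too strong.

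The paper's proof goes in the opposite, contrapositive direction, and that is where the strong Markov uniqueness hypothesis actually does its work: if $\Gamma_{\nu}$ contained two distinct distributions, they would be separated by functionals of the type used in the selection (of the form $E[\int_0^{\infty}e^{-t}h(X(t))dt]$, $h\in C(\bar {E}_0)$, after conditioning and restarting via Lemmas \ref{ctrans}, \ref{paste} and \ref{taushiftn} if needed to separate finite-dimensional distributions). Running the selection of Theorem \ref{uniqueness} with the first function $h_1=h$ chosen so that the two solutions disagree on the corresponding functional, and then again with $h_1=-h$, produces two strong Markov processes as in Theorem \ref{const} whose values of that functional are respectively at least the larger and at most the smaller of the two values; hence the two selections are distinct, contradicting the assumed uniqueness among strong Markov processes obtained as in Theorem \ref{const}. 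Steps (1)--(3) of your outline are fine and supply the needed input, namely that for every admissible choice of $\{h_n\}$ the selection lands on a single strong Markov law; the missing idea is to vary the sequence $\{h_n\}$ to derive a contradiction, rather than to try to force every natural solution into $\Gamma^{\infty}_{\nu}$.
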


\begin{proof}
If  $\Gamma_{\nu}$ contains more than one distribution, 
then, by selecting appropriate sequences $\{h_n\}$, more than 
one strong Markov solution can be constructed. 
\end{proof}

\begin{remark}\label{natunq}
We can't rule out the possibility that there exist 
solutions of the constrained martingale problem 
that are not natural, but, under Condition 1.2 of 
\cite{KS01}, Theorem 2.2 of that paper yields that for 
any solution of the constrained martingale problem there 
exists a natural solution that has the same one 
dimensional distributions.  By Theorem 3.2 of 
\cite{Kur91}, uniqueness of one dimensional distributions 
for solutions with any given initial distribution implies 
uniqueness of finite dimensional distributions, so under 
Condition 1.2 of \cite{KS01}, uniqueness among natural 
solutions will imply uniqueness among all solutions.  
\end{remark}

\setcounter{equation}{0}
\section{Viscosity solutions}\label{sectvisc}
The approach taken above in the construction of a strong 
Markov solution to the constrained martingale problem 
simplifies the proof of existence of viscosity 
semisolutions to the problem 
\begin{equation}\begin{array}{ll}
v(x)-Av(x)=h(x),&\mbox{\rm for }x\in E_0,\\
Bv(x,u)=0,&\mbox{\rm for }x\in\partial E_0\mbox{\rm \ and some }u
\in\xi_x\end{array}
\label{visc}\end{equation}
given in \cite{CK15}, Section 5.  In fact Theorem \ref{subsol} below 
shows that 
the function $v_h$ defined by $(\ref{gam})$ and Lemma \ref{uexist} is 
a viscosity subsolution of $(\ref{visc})$, and hence the function 
$-v_{-h}$ is a viscosity supersolution. As a consequence, 
under mild assumptions, uniqueness of the strong Markov 
solution of the constrained martingale 
problem starting at each $x\in\bar {E}_0$ implies existence of a viscosity 
solution (Corollary \ref{sol}). This construction
is a ``probabilistic'' alternative  
to Perron's method, and it does not require proving the 
comparison principle for $(\ref{visc})$. 

For unconstrained martingale problems, the analogous 
result follows immediately from Section 3 of \cite{CK15}. 
For a class of jump-diffusion processes, for which 
uniqueness in law holds, \cite{CPD12} proves existence of a 
viscosity solution to the backward Kolmogorov equation directly, 
and then uniqueness of the viscosity solution by the comparison principle. 
The fact that the comparison principle for $(\ref{visc})$ implies 
uniqueness of the 
solution to the constrained (or unconstrained) martingale 
problem is the object of \cite{CK15}.

\begin{theorem}\label{subsol}
Let $(Y,\lambda_0,\Lambda_1)$ be a solution to the controlled martingale 
problem for \break 
$(A,E_0,B,\Xi )$. For $h\in C(\bar {E}_0)$, let $v\equiv 
v_h$ be 
the function defined by $(\ref{gam})$ and 
Lemma \ref{uexist}. 

Then $v\big\vert_{\bar {E}_0}$ is a viscosity subsolution of 
$(\ref{visc})$, that is, it is upper semicontinuous, and
if $f\in {\cal D}$ and $x\in\bar {E}_0$ satisfy 
\begin{equation}\sup_{z\in\bar {E}_0}(v-f)(z)=(v-f)(x),\label{csubtest}\end{equation}
then 
\[\begin{array}{ll}
v(x)-Af(x)\leq h(x),&\mbox{\rm \ if }x\in E_0\cup (\partial E_0-F_
1),\\
(v(x)-Af(x)-h(x))\wedge (-\max_{u\in\xi_x}Bf(x,u))\leq 0,&\mbox{\rm \ if }
x\in\partial E_0\cap F_1,\end{array}
\]
($\xi_x$ and $F_1$ being defined at the beginning of Section 
\ref{ClMP}).
\end{theorem}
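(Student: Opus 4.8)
The strategy is to recognize $v\equiv v_h\big|_{\bar E_0}$ as the value function of the optimization defining $\gamma$, and to read off the viscosity inequalities from the dynamic programming identity that holds at the optimum, combined with the generator martingale of Definition \ref{clmp}. Upper semicontinuity of $v$ is already contained in Lemma \ref{uexist}, so only the test inequality must be proved. Fix $f\in\mathcal D$ and $x\in\bar E_0$ with $\sup_{z\in\bar E_0}(v-f)(z)=(v-f)(x)$; replacing $f$ by $f$ minus a constant (permissible since $(1,0)\in A$ and $(1,0)\in B$, so constants lie in $\mathcal D$ with $A$- and $B$-value $0$), we may assume $v_h(x)=f(x)$ and $v_h\le f$ on $\bar E_0$. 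Since $x\in\bar E_0\subset F_2$ by Condition \ref{cond1}(b), $\Pi_{\delta_x}$ is nonempty and compact and the supremum in $(\ref{gam})$ for $\nu=\delta_x$ is attained; fix a solution $(Y,\lambda_0,\Lambda_1)$ with right-continuous filtration $\{\mathcal F_t\}$ whose distribution lies in $\Pi^h_{\delta_x}$ (Lemma \ref{pih}).

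Under this optimal law two martingales are available. By the $n=1$ case of Lemma \ref{Ymgrel} (applied, if necessary, after adding a constant to $h$, which only shifts $v_h$ by that constant), the process $e^{-\lambda_0(t)}v_h(Y(t))+\int_0^t e^{-\lambda_0(s)}h(Y(s))\,d\lambda_0(s)$ is a $\{\mathcal F_t\}$-martingale; this encodes optimality (the equality form of the DPP). From Definition \ref{clmp} and integration by parts (using that $\lambda_0$ is continuous and nondecreasing), $e^{-\lambda_0(t)}f(Y(t))-f(x)-\int_0^t e^{-\lambda_0(s)}(Af-f)(Y(s))\,d\lambda_0(s)-\int_{[0,t]\times U}e^{-\lambda_0(s)}Bf(Y(s),u)\,\Lambda_1(ds\times du)$ is a $\{\mathcal F_t\}$-martingale as well. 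Subtracting, using $v_h(x)=f(x)$, and applying optional sampling at a bounded stopping time $T$ yields
\[
E\Big[\int_0^T e^{-\lambda_0(s)}(f-Af-h)(Y(s))\,d\lambda_0(s)\Big]+E\Big[\int_{[0,T]\times U}e^{-\lambda_0(s)}\big(-Bf(Y(s),u)\big)\,\Lambda_1(ds\times du)\Big]=E\big[e^{-\lambda_0(T)}(v_h-f)(Y(T))\big],
\]
whose right-hand side is $\le 0$ as soon as $Y(T)\in\bar E_0$, since $v_h\le f$ there.

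Now argue by contradiction, taking $T=t\wedge\tau_\delta$ with $\tau_\delta=\inf\{s:Y(s)\notin B_\delta(x)\}>0$ a.s. If $x\in E_0\cup(\partial E_0\setminus F_1)$, choose $\delta$ small enough that $B_\delta(x)\cap F_1=\emptyset$ ($F_1$ being closed, $x\notin F_1$) and, supposing $v(x)-Af(x)>h(x)$, also $f-Af-h\ge\varepsilon>0$ on $\bar B_\delta(x)\cap\bar E_0$; before $\tau_\delta$ the process stays in $B_\delta(x)\cap(\bar E_0\cup F_1)=B_\delta(x)\cap\bar E_0$, so $\Lambda_1$ charges nothing by $(\ref{Lam1supp})$ and $\lambda_0(s)=s$, and the displayed identity gives $\varepsilon\,E[1-e^{-(t\wedge\tau_\delta)}]\le 0$, impossible for $t>0$. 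If $x\in\partial E_0\cap F_1$, suppose for contradiction both $v(x)-Af(x)-h(x)>0$ and $\max_{u\in\xi_x}Bf(x,u)<0$; by continuity of $Af,f,h$ on $E$ and of $Bf$ on $\Xi$, together with compactness of $\xi_x$ and of $\Xi\cap(\bar B_\delta(x)\times U)$, choose $\delta$ so that $f-Af-h\ge\varepsilon$ on $\bar B_\delta(x)\cap\bar E_0$ and $Bf\le-\varepsilon$ on $\Xi\cap(\bar B_\delta(x)\times U)$. Since $\lambda_0$ increases only on $\{Y\in\bar E_0\}$ and $\Lambda_1$ is carried by $\Xi$, both terms on the left of the identity dominate $\varepsilon$ times the corresponding discounted increments, so their sum is at least $\varepsilon\,E\big[\int_0^T e^{-\lambda_0(s)}\,d(\lambda_0+\lambda_1)(s)\big]=\varepsilon\,E\big[\int_0^T e^{-\lambda_0(s)}\,ds\big]\ge\varepsilon\,E[1-e^{-(t\wedge\tau_\delta)}]>0$, contradicting that the right-hand side is $\le 0$. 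The identity $\lambda_0+\lambda_1=\mathrm{id}$ used here is exactly what welds the interior term and the boundary term into a single coercive quantity, and is the crux of the boundary case. Since $\bar E_0=E_0\cup(\partial E_0\setminus F_1)\cup(\partial E_0\cap F_1)$, this establishes the claim.

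The step requiring genuine care — and the one I expect to be the main obstacle — is ensuring $Y(T)\in\bar E_0$, so that the right-hand side of the identity is nonpositive. When $Y$ has continuous paths, as for the reflecting diffusions that motivate the paper, this is immediate: on $\{t<\tau_\delta\}$ one has $Y(t)\in B_\delta(x)\cap\bar E_0$, and on $\{\tau_\delta\le t\}$ one has $Y(\tau_\delta)\in\overline{B_\delta(x)\cap\bar E_0}\subset\bar E_0$, while $v_h\le f$ on all of $\bar E_0$, not merely near $x$. If $Y$ may jump out of $B_\delta(x)$ into $F_1\setminus\bar E_0$, a routine extra localization is needed — stopping just before such a jump, or using upper semicontinuity of $v_h$ at $x$ to control $(v_h-f)(Y(T))$ — carried out as in \cite{CK15}. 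The remaining ingredients (the integration by parts, the fact that all martingales involved are bounded on $[0,T]$ so that optional sampling applies, and the continuity/compactness arguments that fix $\delta$) are routine.
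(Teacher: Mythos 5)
Your argument is correct and is essentially the paper's proof: both fix an optimal $P\in\Pi^h_{\delta_x}$, combine the dynamic-programming identity for $v_h$ with the discounted martingale for $f$ coming from Definition \ref{clmp}, localize at the exit time of a small ball around $x$, and use $\lambda_0+\lambda_1=t$ to weld the interior and boundary terms into a single positive quantity; your contradiction argument with a fixed $\delta$ merely replaces the paper's division by $E^P[\tau_\epsilon]$ and passage to the limit $\epsilon\to 0$, a presentational difference. The one point you flag --- that $Y(T)$ may land in $F_2\setminus\bar{E}_0$, where $v_h\le f$ is not part of the hypothesis --- is equally implicit in the paper's own step $\mu_\epsilon v-\mu_\epsilon f\le 0$ (with $\mu_\epsilon$ the law of $Y(\tau_\epsilon)$, supported only in $F_2$), and is vacuous in the motivating applications where $F_1\subset\partial E_0$.
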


\begin{proof}
$ $$v$ is upper semicontinuous by Lemma \ref{uexist}. 

Suppose $x$ is a point such that 
$v(x)-f(x)=\sup_z(v(z)-f(z))$.  As we can always 
add a constant to $f$, we can assume $v(x)-f(x)=0$.  
By compactness, we have 
\[v(x)=E^P\left[\int_0^{\infty}e^{-\lambda_0(s)}h(Y(s))d\lambda_0
(s)\right]\]
for some $P\in\Pi_{\delta_x}$.
For $\epsilon >0$, define 
\[\tau_{\epsilon}=\epsilon\wedge\inf\{t>0:r(Y(t),x)\geq\epsilon\mbox{\rm \ or }
r(Y(t-),x)\geq\epsilon \},\]
where $r$ is the metric in $E$, and let $H_{\epsilon}=e^{-\lambda_
0(\tau_{\epsilon})}$. 
Since $(Y,\lambda_0,\Lambda_1)$ is a solution to the controlled martingale 
problem for $(A,E_0,B,\Xi )$, we have 
\begin{eqnarray*}
0&&=v(x)-f(x)\\
&&=E^P\left[\int_0^{\infty}e^{-\lambda_0(s)}(h-f+Af)(Y(s))\,d\lambda_
0(s)+\int_{[0,\infty )\times U}e^{-\lambda_0(s)}Bf(Y(s),u)\Lambda_
1(ds\times du)\right]\\
&&=E^P\left[\int_0^{\tau_{\epsilon}}e^{-\lambda_0(s)}(h-f+Af)(Y(s
))\,d\lambda_0(s)+\int_{[0,\tau_{\epsilon}]\times U}^{}e^{-\lambda_
0(s)}Bf(Y(s),u)\Lambda_1(ds\times du)\right]\\
&&\qquad +E^P\left[e^{-\lambda_0(\tau_{\epsilon})}\int_0^{\infty}
e^{-\lambda_0^{\tau_{\epsilon}}(s)}(h-f+Af)(Y^{\tau_{\epsilon}}(s
))\,d\lambda_0^{\tau_{\epsilon}}(s)\right]\\
&&\qquad +E^P\left[e^{-\lambda_0(\tau_{\epsilon})}\int_{[0,\infty 
)\times U}e^{-\lambda_0^{\tau_{\epsilon}}(s)}Bf(Y^{\tau_{\epsilon}}
(s),u)\Lambda_1^{\tau_{\epsilon}}(ds\times du)\right]\\
&&=E^P\left[\int_0^{\tau_{\epsilon}}e^{-\lambda_0(s)}(h-f+Af)(Y(s
))\,d\lambda_0(s)\right]\\
&&\qquad +E^P\left[\int_{[0,\tau_{\epsilon}]\times U}e^{-\lambda_
0(s)}Bf(Y(s),u)\Lambda_1(ds\times du)\right]\\
&&\\
&&\qquad +E^P[H_{\epsilon}]E^{P^{\tau_{\epsilon},H_{\epsilon}}}\left
[\int_0^{\infty}e^{-\lambda_0(s)}(h-f+Af)(Y(s))\,d\lambda_0(s)\right
]\\
&&\qquad +E^P[H_{\epsilon}]E^{P^{\tau_{\epsilon},H_{\epsilon}}}\left
[\int_{[0,\infty )\times U}e^{-\lambda_0(s)}Bf(Y(s),u)\Lambda_1(d
s\times du)\right],\\
\end{eqnarray*}
with $(Y^{\tau_{\epsilon}},\lambda_0^{\tau_{\epsilon}}(s),\Lambda_
1^{\tau_{\epsilon}})$ and $P^{\tau_{\epsilon},H_{\epsilon}}$ as in Lemma \ref{ctrans}. 
Setting $\mu_{\epsilon}(\cdot )\equiv P^{\tau_{\epsilon},H_{\epsilon}}
(Y(0)\in\cdot )=P(Y(\tau_{\epsilon})\in\cdot )$, and denoting 
$\mu_{\epsilon}f\equiv\int_{F_2}f(z)\,\mu_{\epsilon}(dz)$, by Lemma \ref{ctrans} and Lemma 
\ref{uexist} we have 
\begin{eqnarray*}
&&E^P[H_{\epsilon}]E^{P^{\tau_{\epsilon},H_{\epsilon}}}\left[\int_
0^{\infty}e^{-\lambda_0(s)}\left(h(Y(s))-f(Y(s))+Af(Y(s))\right)d
\lambda_0(s)\right]\\
&&\quad +E^P[H_{\epsilon}]E^{P^{\tau_{\epsilon},H_{\epsilon}}}\left
[\int_{[0,\infty )\times U}e^{-\lambda_0(s)}Bf(Y(s),u)\Lambda_1(d
s\times du)\right]\\
&&\leq E^P[H_{\epsilon}](\gamma (\Pi_{\mu_{\epsilon}},h)-\mu_{\epsilon}
f)=E^P[H_{\epsilon}](\mu_{\epsilon}v-\mu_{\epsilon}f)\leq 0,\end{eqnarray*}
where the last inequality uses the fact that 
$v-f\leq 0$. Therefore 
\begin{eqnarray*}
0&\leq&\lim_{\epsilon\rightarrow 0}\frac {E^P\left[\int_0^{\tau_{
\epsilon}}e^{-\lambda_0(s)}(h-f+Af)(Y(s))\,d\lambda_0(s)+\int_{[0
,\tau_{\epsilon}]\times U}e^{-\lambda_0(s)}Bf(Y(s),u)\Lambda_1(ds
\times du)\right]}{E^P[\tau_{\epsilon}]}\\
&=&h(x)-f(x)+Af(x_0)=h(x)-v(x)+Af(x),\end{eqnarray*}
if $x\in E_0\cup (\partial E_0-F_1)$, and 
\[0\leq (h(x)-v(x)+Af(x))\vee\max_{u\in\xi_x}Bf(x,u)),\]
if $x\in\partial E_0\cap F_1$.
\end{proof}

\begin{remark}\label{Mrepres}
Note that, for each $x\in\bar {E}_0$, 
\[v(x)\equiv v_h(x)=E\left[\int_0^{\infty}e^{-s}h(X^h(s))ds\right
]\]
for some strong Markov process $X^h=Y\circ\tau$  
obtained from a solution of the controlled martingale 
problem as in Theorem \ref{const} with $Y(0)=x$.
\end{remark}

\begin{corollary}\label{sol}

\item[a)] If, for each $x\in\bar {E}_0$, there is a unique solution $
(Y,\lambda_0,\Lambda_1)$ 
of the controlled martingale problem for $(A,E_0,B,\Xi )$ with 
$Y(0)=x$, then there exists a viscosity solution to $(\ref{visc})$.

\item[b)] If the assumptions 
of Corollary \ref{M-const} a) or b) are satisfied for each $\delta_x$, 
$x\in\bar {E}_0$, and there is a unique strong Markov, natural 
solution to the (local) constrained 
martingale problem with $X(0)=x$, 
then there exists a viscosity solution to $(\ref{visc})$.
\end{corollary}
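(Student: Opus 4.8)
The plan is to produce the viscosity solution as the single function $v_h$ of $(\ref{gam})$ and Lemma~\ref{uexist}, by showing that the relevant uniqueness hypothesis forces $v_h=-v_{-h}$ on $\bar E_0$. By Theorem~\ref{subsol}, $v_h\big\vert_{\bar E_0}$ is a viscosity subsolution of $(\ref{visc})$; applying that theorem with $h,f$ replaced by $-h,-f$ (legitimate since ${\cal D}$ is linear, so $-f\in{\cal D}$) shows, as already noted in the discussion preceding Theorem~\ref{subsol}, that $-v_{-h}\big\vert_{\bar E_0}$ is a viscosity supersolution of $(\ref{visc})$. Since $v_h$ is upper semicontinuous and $-v_{-h}$ is lower semicontinuous, once we know $v_h=-v_{-h}$ the common function is continuous and is simultaneously a viscosity sub- and supersolution, hence a viscosity solution. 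So the whole task reduces to the identity $v_h=-v_{-h}$.

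For part~(a), uniqueness of the solution of the controlled martingale problem with $Y(0)=x$ means $\Pi_{\delta_x}$ is a singleton $\{P_x\}$ for every $x\in\bar E_0$, so the supremum in $(\ref{gam})$ is attained over a single measure and
\[
v_h(x)=E^{P_x}\Big[\int_0^\infty e^{-\lambda_0(s)}h(Y(s))\,d\lambda_0(s)\Big],\qquad v_{-h}(x)=-E^{P_x}\Big[\int_0^\infty e^{-\lambda_0(s)}h(Y(s))\,d\lambda_0(s)\Big]=-v_h(x).
\]
Hence $v_h=-v_{-h}$ and the first paragraph completes the proof.

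For part~(b), I would invoke Remark~\ref{Mrepres}: for each $x\in\bar E_0$ there is a strong Markov process $X^h=Y\circ\tau$, obtained as in Theorem~\ref{const} from a solution $(Y,\lambda_0,\Lambda_1)$ of the controlled martingale problem with $Y(0)=x$ selected as in Theorem~\ref{uniqueness}, with $v_h(x)=E\big[\int_0^\infty e^{-s}h(X^h(s))\,ds\big]$; the same applied to $-h$ gives an analogous $X^{-h}$ with $v_{-h}(x)=E\big[\int_0^\infty e^{-s}(-h)(X^{-h}(s))\,ds\big]$. Under the hypotheses of Corollary~\ref{M-const}~(a) or~(b), both $X^h$ and $X^{-h}$ are \emph{natural} strong Markov solutions of the constrained (local) martingale problem with $X(0)=x$ --- this is exactly what Corollary~\ref{M-const} provides --- so the assumed uniqueness in distribution of such a solution forces $X^h$ and $X^{-h}$ to share a common law, say that of $X$. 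Then
\[
v_{-h}(x)=E\Big[\int_0^\infty e^{-s}(-h)(X(s))\,ds\Big]=-E\Big[\int_0^\infty e^{-s}h(X(s))\,ds\Big]=-v_h(x),
\]
and again $v_h=-v_{-h}$, giving the viscosity solution.

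The one point needing care is the identification, in part~(b), of $X^h$ as an object to which the uniqueness assumption applies: the process selected in Theorem~\ref{uniqueness} is a priori only a solution of the martingale problem for the operator $\mathbb A$, and it is a genuine \emph{natural} solution of the constrained (local) martingale problem precisely under the extra hypotheses of Corollary~\ref{M-const}, which are assumed here. One should also note that only the one-dimensional law of $X^h$ enters the representation, so uniqueness ``in distribution'' is exactly the right hypothesis, and that the linearity of ${\cal D}$ is what makes the passage from ``$v_h$ is a subsolution'' to ``$-v_{-h}$ is a supersolution'' legitimate. Everything else is a direct appeal to results already established.
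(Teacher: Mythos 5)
Your proposal is correct and follows essentially the same route as the paper: part (a) reduces to the identity $v_h=-v_{-h}$, which follows because the supremum in $(\ref{gam})$ is over the singleton $\Pi_{\delta_x}$, and part (b) uses the representation of Remark \ref{Mrepres} together with uniqueness in law of the strong Markov natural solution to force $X^h$ and $X^{-h}$ to coincide in distribution. Your added remarks on continuity of the common function $v_h=-v_{-h}$ and on why the uniqueness hypothesis applies to $X^h$ are correct elaborations of points the paper leaves implicit.
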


\begin{proof}
For each $x\in\bar {E}_0$, let $v\equiv v_h$ be 
the function defined by $(\ref{gam})$ and 
Lemma \ref{uexist}. Then, by uniqueness of the solution 
to the controlled martingale problem for $(A,E_0,B,\Xi )$, 
\[v(x)\equiv v_h(x)=-v_{-h}(x),\]
and, as noted at the beginning of this subsection, $-v_{-h}$ is a 
supersolution of $(\ref{visc})$. 

The second assertion follows from Remark \ref{Mrepres} by the 
same argument.

\end{proof}

\setcounter{equation}{0} 

\section{Diffusions with oblique reflection in piecewise smooth 
domains: existence and Markov property}\label{reflect}

Let $E_0$ be a bounded, 
simply connected, open subset of ${\mathbb R}^d$ such that 
$E_0\equiv\cap_{i=1}^mE_0^i$, where $E_0^i$, $i=1,...,m$, are 
simply connected open sets in $\R^d$ with $C^1$ boundaries.  
Specifically, we will assume that for each $i$ there is a 
function $\psi_i\in C^1({\mathbb R}^d)$ such that $E_0^i=\{x:\psi_i(
x)>0\}$ and that 
$\psi_i(x)=0$ implies $\nabla\psi_i(x)\neq 0$. In particular, 
$\partial E_0^i=\{x:\psi_i(x)=0\}$, and the inward normal at $x\in
\partial E_0^i$ is 
$n^i(x)=\frac {\nabla\psi_i(x)}{|\nabla\psi_i(x)|}$. We will assume that 
\begin{equation}\overline {E_0}=\cap_{i=1}^m\overline {E_0^i}.\label{closure}\end{equation}

Suppose that on  $\partial E_0^i$ a variable direction of reflection $
g^i$ is 
assigned.  We assume that $g^i$ is continuous on $\partial E^i_0$ and 
$\langle \nabla\psi^i(x),g^i(x)\rangle >0$, $x\in\partial E_0^i$.  It is convenient, and no loss of 
generality to assume that $g^i:{\mathbb R}^d\rightarrow {\mathbb R}^d$ and is continuous 
on all of ${\mathbb R}^d$ with $\langle \nabla\psi^i(x),g^i(x)\rangle\geq 
0$ (allowing $0$ away 
from $\partial E_0^i$).
Noting that $x\in\partial E_0$ may be in more than 
one $\partial E^i_0$, for $x\in\partial E_0$, we define the cone of 
possible directions of reflection
\begin{equation}G(x)\equiv\left\{\sum_{i:x\in\partial E_0^i}\eta_
ig^i(x),\,\eta_i\geq 0\right\}\label{rcone}\end{equation}
and also define
\begin{equation}N(x)\equiv\left\{\sum_{i:x\in\partial E_0^i}\eta_
in^i(x),\eta_i\geq 0\right\}.\label{Ndef}\end{equation}

Starting from the late '70s, there has been a considerable 
amount of work devoted to proving existence and 
uniqueness of reflecting diffusions in $\bar {E}_0$ with direction 
of reflection $g^i$ on $\partial E_0^i$.  Perhaps the most general result 
in this sense is \cite{DI93}.  However the assumptions in 
\cite{DI93} are not satisfied in many natural situations, 
as in the following example.  

\begin{example}\label{ex}
Let $E_0\equiv E_0^1\cap E_0^2$, where $E_0^1$ is the unit ball centered at $
(1,0)$ 
and $E_0^2$ is the upper half plane. Let $n^i$, $i=1,2$, denote the 
unit, inward normal to $\overline {E_0^i}$, and 
\[g^i(x)\equiv\left[\begin{array}{cc}
\cos(\vartheta )&\sin(\vartheta )\\
-\sin(\vartheta )&\cos(\vartheta )\end{array}
\right]n^i(x),\qquad\vartheta\mbox{\rm \ a constant angle, }\frac {
\pi}4\leq\vartheta <\frac {\pi}2.\]
Then, at $x^0=0$, it can be proved by contradiction that 
there is no convex compact set that satisfies $(3.7)$ of 
\cite{DI93}. 
\end{example}

In addition \cite{DI93} 
does not cover the case of cusp like singularities  
(covered by \cite{CK18} in dimension 2). 

\cite{DW95} considers convex polyhedrons (take 
$\psi_i(x)=\langle n^i,x\rangle -b^i$, $n^i$ and $b^i$ constant) with constant 
direction of reflection $g^i$ on each face. In this context, 
\cite{DW95}
proves existence and uniqueness (in distribution) 
of semimartingale reflecting 
Brownian motion under a condition 
which, in the case of simple polyhedrons, reduces to the 
assumption that, 
for every $x\in\partial E_0$, there exists $e(x)\in N(x)$, $|e(x)
|=1$, such that 
\begin{equation}\langle g,e(x)\rangle >0,\quad\forall g\in G(x)-\{
0\}.\label{S1}\end{equation}
Moreover, for simple polyhedrons, \cite{DW95}, 
Propositions 1.1 and  
1.2, shows that $(\ref{S1})$ is necessary for existence of 
semimartingale reflecting Brownian motion. 
(Non-semimartingale reflecting Brownian motion, which is studied, 
for example, in \cite{KR10}, \cite{KR17} and \cite{LRZ19}, is not 
considered here.) Note that $(\ref{S1})$ is 
satisfied in Example \ref{ex}. 

In \cite{DW95}, 
a key point in proving uniqueness is the fact that 
there exist strong Markov processes that satisfy the definition 
of semimartingale reflecting Brownian motion  
and that uniqueness among these strong 
Markov processes implies uniqueness among all processes 
that satisfy the definition 
(analogously in \cite{KW91} and \cite{TW93}).  
Our goal here is to prove that this key point holds 
for general diffusion processes on
domains $E_0$ as defined above 
under Condition \ref{dircond} below, thus 
providing the first step in extending proofs of 
uniqueness to this more general setting

In \cite{DI93}, \cite{DW95} and in most of the literature, 
reflecting diffusions are defined as (weak) solutions of 
stochastic differential equations with reflection. 
Here we start by studying the corresponding controlled 
martingale problem 
and constrained martingale problem, and then 
show that the set 
of natural solutions to the constrained martingale problem 
coincides with the set of solutions of the 
stochastic differential equation with reflection.

We consider the controlled martingale problem 
for $(A,E_0,B,\Xi )$, with 
\begin{eqnarray}
&Af(x)\equiv\langle\nabla f(x),b(x)\rangle +\frac 12\mbox{\rm tr}\big
(\sigma (x)\sigma^T(x)D^2f(x)\big),&\non\\
&Bf(x,u)\equiv\langle\nabla f(x),u\rangle ,&\label{rfclmp}\\
&U\equiv \{u\in\R^d:\,|u|=1\},&\non\\
&\Xi\equiv \{(x,u)\in\partial E_0\times U:\,u\in G(x)\},&\non\end{eqnarray}
${\cal D}\equiv C^2_c(\mathbb R^d)$, and we assume that $\sigma$ and $
b$ are 
bounded and continuous on ${\mathbb R}^d$.

Note that $F_1$, defined at the beginning of Section 
\ref{ClMP}, in this case is $\partial E_0$, so a solution of the controlled 
martingale problem must take values in $\bar {E}_0$ (Remark 
\ref{stsp}).

For $x\in (E_0)^c=\cup_{i=1}^m(E_0^i)^c$, let 
\begin{equation}I(x)\equiv \{i:x\in (E_0^i)^c\}.\label{indexset}\end{equation}
Since $(E_0^j)^c$ is closed, 
if $j\in I(z^k)$ for some sequence $z^k\rightarrow x$, then 
$j\in I(x)$. Consequently, for each $x\in (E_0)^c$ there exists $
\delta (x)$ such that 
\begin{equation}I(z)\subset I(x),\quad\mbox{\rm for }z\in (E_0)^c\mbox{\rm \ with }
|z-x|<\delta (x).\label{Iusc}\end{equation}
Note that, for $x\in\partial E_0$, 
\begin{equation}I(x)\equiv \{i:x\in\partial E_0^i\}.\label{bdindset}\end{equation}
Define also, for $x\in\partial E_0$, 
\begin{equation}{\cal I}(x)\equiv \{I\subset I(x):\,\exists z\in 
(\overline {E_0})^c,\,|z-x|<\delta (x),\,s.t.\,I=I(z)\}.\label{scriptI}\end{equation}

We assume that $E_0^i$ and $g^i$, $i=1,...,m$, satisfy the 
following condition. 

\begin{condition}\label{dircond}
\item[a)]For $i=1,...,m$, $g^i:\R^d\rightarrow\R^d$ are 
continuous vector fields of unit length on $\partial E_0^i$, that satisfy  
\[\langle g^i(x),n^i(x)\rangle >0,\quad\forall x\in\partial E_0^i
.\]
\item[b)]For each $x\in\partial E_0$, there exists $e(x)\in N(x)$, $
|e(x)|=1$, that 
satisfies 
\[\langle g,e(x)\rangle >0,\quad\forall g\in G(x)-\{
0\}.\]

\item[c)]For each $x\in\partial E_0$,  $I\in {\cal I}(x)$, and 
$n=\sum_{i\in I}\eta_in^i(x)$, $\eta_i\geq 0$, $\sum_{i\in I}\eta_
i>0$, there exists $j\in I$ such that 
\[\langle n,g^j(x)\rangle >0.\]
\end{condition}
\vskip.1in

\begin{remark}\label{polyhedron} 
In the case of simple, convex polyhedrons with constant 
direction of reflection on each face, 
Condition \ref{dircond} b) becomes {\bf (S.b)} of 
\cite{DW95} and Condition \ref{dircond} c) is immediately 
implied by {\bf (S.a)} of \cite{DW95}. 
In fact, since {\bf (S.a)} and {\bf (S.b)} are equivalent for simple 
polyhedrons 
(\cite{DW95}, Proposition 1.1), in this case Condition \ref{dircond} 
is equivalent to the assumptions of \cite{DW95}. 
\end{remark}

\begin{example}\label{cuspeg}
For domains with curved boundaries and singularities, e.g. 
cusp-like singularities, Condition \ref{dircond} may be 
satisfied, whereas {\bf (S.a)} and {\bf (S.b)} of \cite{DW95} are not. 
As an example, consider the domain 
\[E_0\equiv \{x\in\R^2:\,0<x_1,\,-x_1^4<x_2<x_1^2,\,x_1^2+x_2^2<1
\}.\]
Then $E_0=\cap_{i=1}^4E^i_0$ with 
\[\psi_1(x)\equiv x_2+x_1^4,\,\,\,\psi_2(x)\equiv x_1^2-x_2,\,\,\,
\psi_3(x)\equiv 1-x_1^2-x_2^2,\,\,\,\psi_4(x)\equiv x_1.\]
Let $g^1$ and $g^2$ be 
continuous vector fields defined on $\partial E_0^1$ and $\partial 
E_0^2$, 
respectively, such that $g^1(0)=[-\frac 12,\frac {\sqrt {3}}2]^T$, 
$g^2(0)=[\frac {\sqrt {2}}2,-\frac {\sqrt {2}}2]^T$, and take $g^
4(0)\equiv [1,0]^T$. Then 
${\cal I}(0)=\big\{\{1\},\{2\},\{4\},\{1,4\},\{2,4\}\big\}$ and it is easy to check 
that Condition \ref{dircond} is satisfied at $0$. 
\end{example}

\begin{remark}
In general, there are multiple possible choices of $E_0^i$ and 
$g^i$, $i=1,...,m$, that determine the same domain $E_0=\cap_{i=1}^
mE_0^i$ 
and the 
same direction of reflection at each point of the 
smooth part of the boundary of $E_0$. In some cases, 
some of these choices satisfy Condition \ref{dircond} and 
others do not. For instance, in Example \ref{cuspeg} one 
can take $E_0=\cap_{i=1}^3\tilde {E}^i_0$ with $\tilde{\psi}_3=\psi_
3$ and 
\[\tilde{\psi}_1(x)\equiv\left\{\begin{array}{ll}
x_2+x_1^4,&x_1\geq 0,\\
x_2-x_1^4,&x_1<0,\end{array}
\right.,\quad\tilde{\psi}_2(x)\equiv\left\{\begin{array}{ll}
x_1^2-x_2,&x_1\geq 0,\\
-x_1^2-x_2,&x_1<0.\end{array}
\right.\]
Then ${\cal I}(0)=\big\{\{1\},\{2\},\{1,2\}\big\}$ and, with the same $
g^1(0)$ and 
$g^2(0)$ as above, Condition \ref{dircond} is not satisfied at 
$0$. 
\end{remark}

As anticipated in Remark \ref{exist}, we will obtain a solution 
to the controlled martingale 
problem $(\ref{rfclmp})$ by constructing a solution to the 
corresponding {\em patchwork martingale problem\/} (\cite{Kur90}), which will 
also be a solution to the controlled martingale problem. 

\begin{definition}\label{pwmp} {\bf (\cite{Kur90}, Lemma 1.1) }
Given a complete, separable metric space $E$, an open 
subset $E_0$ of $E$, a partition of $E-E_0$ into Borel sets 
$\{E_1,...,E_m\}$ and dissipative operators 
$A,B_1,...,B_m\subset C(E)\times C(E)$, each containing $(1,0)$ and 
with a common domain ${\cal D}$ dense in $C(E)$, a solution to the 
patchwork martingale problem for $(A,E_0,B_1,E_1,...B_m,E_m)$ is 
a process $(Y,\lambda_0,l_1,...,l_m)$ such that $Y$ has paths in 
${\cal D}_E[0,\infty )$, $\lambda_0,l_1,...,l_m$ are nondecreasing, $
l_1$ increases only 
when $Y\in\bar {E}_i$, $\lambda_0(t)+\sum_{i=1}^ml_i(t)=t$, and there exists a 
filtration $\{{\cal F}_t\}$ such that $(Y,\lambda_0,l_1,...,l_m)$ is $
\{{\cal F}_t\}$-adapted 
and 
\[f(Y(t))-f(Y(0))-\int_0^tAf(Y(s))d\lambda_0(s)-\sum_{
i=1}^m\int_0^tB_if(Y(s))dl_i(s)\]
is a $\{{\cal F}_t\}$-martingale for all $f\in {\cal D}$. 
\end{definition}

\begin{theorem}\label{ctrl-exist} 
For each $\nu\in {\cal P}(\bar {E}_0)$, there exists a solution $
(Y,\lambda_0,\Lambda_1)$ of 
the controlled martingale problem for $(A,E_0,B,\Xi )$ defined 
by $(\ref{rfclmp})$, 
with initial distribution $\nu$.  
\end{theorem}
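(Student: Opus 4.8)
The plan is to construct a solution of the \emph{patchwork} martingale problem for a suitable partition of $(E_0)^c=\cup_i E_i$ and operators $B_i f(x)=\langle\nabla f(x),g^i(x)\rangle$, since by Definition \ref{pwmp} (i.e. Lemma 1.1 of \cite{Kur90}) such a solution automatically yields a solution of the controlled martingale problem for $(A,E_0,B,\Xi)$ with $\Xi$ as in $(\ref{rfclmp})$: taking $\Lambda_1(ds\times du)=\sum_i dl_i(s)\,\delta_{g^i(Y(s))}(du)$ gives a random measure supported on $\Xi$ with $\lambda_1=\sum_i l_i$, and the martingale in Definition \ref{pwmp} becomes exactly $(\ref{mgp1})$. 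So everything reduces to producing a patchwork solution.

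First I would choose the partition. The natural choice is to assign to each $x\in(E_0)^c$ an index in $I(x)$ in a Borel-measurable, ``lower semicontinuous'' way --- e.g. $i(x)\equiv\min I(x)$ --- and set $E_i\equiv\{x\in(E_0)^c:i(x)=i\}$; by $(\ref{Iusc})$ these are Borel and have the property that on $\bar E_i$ one is allowed to push in direction $g^i$. (Some care is needed so that $B_i$ only acts where $\langle\nabla\psi^i,g^i\rangle>0$, which is why we extended $g^i$ to all of $\R^d$ with the product $\geq 0$; the reflection term is then harmless where it vanishes.) Next, following Remark \ref{exist} and Theorem 2.2 of \cite{Kur91}, I would set up the approximating sequence: let $A^n$ be bounded generators approximating $A$ (e.g. the Markov generators of the diffusion killed/restarted, or Euler--Maruyama-type chains with bounded jump rates $n$), and let $B_i^n$ be the bounded pure-jump generator producing a jump of size $n^{-1}$ (or an exponential-rate-$n$ jump) in direction $g^i(x)$. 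For each $n$ one gets a genuine (bounded-generator) Markov process $(Y^n,\lambda_0^n,l_1^n,\ldots,l_m^n)$ with initial law $\nu$, solving the patchwork martingale problem for $(A^n,E_0,B_1^n,E_1,\ldots)$, simply because the generator is bounded. By Lemma \ref{picompact}-type tightness arguments (the $\lambda_0^n,l_i^n$ are $1$-Lipschitz; the $Y^n$ are tight by the compact containment in $\bar E_0$ and the boundedness of $b,\sigma$), the family $\{(Y^n,\lambda_0^n,l_1^n,\ldots,l_m^n)\}$ is relatively compact; pass to a weakly convergent subsequence and identify the limit as a solution of the patchwork martingale problem for $(A,E_0,B_1,E_1,\ldots,B_m,E_m)$ by the standard continuous-mapping/uniform-integrability argument (the martingales in $(\ref{mgp1})$ have linearly bounded increments).

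The main obstacle is the identification of the support of the limiting $l_i$ --- i.e., showing $l_i$ increases only when $Y\in\bar E_i$ --- and, relatedly, showing that in the limit one does not ``lose'' the reflection at the singular set $\partial E_0\cap(\cup_{|I|\ge 2}\{I(\cdot)=I\})^{-}$, where the approximating process may oscillate among several faces. This is exactly where Condition \ref{dircond} enters: part (a) guarantees each individual reflection is non-tangential (so the approximations don't degenerate), part (b) provides a single direction $e(x)$ with $\langle g,e(x)\rangle>0$ for all $g\in G(x)\setminus\{0\}$, which (via a test function behaving like $\langle e(x^0),\cdot-x^0\rangle$ near a boundary point $x^0$, together with Lemma \ref{invert}-style estimates) forces $\lambda_0\to\infty$ and gives an a priori bound $E[\lambda_1^n(t)]=E[\sum_i l_i^n(t)]\le C$ uniform in $n$, hence tightness of the $l_i^n$ and positivity of the time spent in $\bar E_0$; and part (c) is what rules out the limit pushing in a ``forbidden'' normal combination at the singularities, ensuring the support condition $l_i\{Y\notin\bar E_i\}=0$ survives the limit. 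I would handle the support condition by the usual occupation-measure argument: $\int_0^t \mathbf 1_{E\setminus \bar E_i}(Y^n(s))\,dl_i^n(s)$ is small by construction, and $\mathbf 1_{E\setminus\bar E_i}$ can be dominated by continuous functions vanishing on $\bar E_i$ to pass to the limit. Once the patchwork solution is in hand, re-expressing it as $(Y,\lambda_0,\Lambda_1)$ as above completes the proof; note that the constraint $\lambda_0(t)+\lambda_1(t)=t$ and $Y\in D_{\bar E_0}[0,\infty)$ follow automatically, the latter since $F_1=\partial E_0$ here (Remark \ref{stsp}).
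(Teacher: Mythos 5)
Your overall strategy --- build a solution of a patchwork martingale problem for a partition of the complement of $E_0$ with operators $B_if=\langle\nabla f,g^i\rangle$, then convert it to a controlled solution --- is the same as the paper's. But there are two genuine gaps. First, the partition. You take $E_i=\{x:\min I(x)=i\}$, i.e.\ you assign to each point an essentially arbitrary admissible face. The paper instead introduces the Lyapunov function $\phi(x)=\sum_i\chi(-\psi_i(x))$ and assigns $x$ to a face $i$ for which $\langle\nabla\phi(x),g^i(x)\rangle\le 0$, i.e.\ a face whose direction of reflection does not increase $\phi$; Condition \ref{dircond}c) (via the compactness constant $\beta>0$) is used precisely to show that every point near $\partial E_0$ outside $\bar E_0$ admits at least one such face, so that these sets cover $E\setminus E_0$. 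With your choice $\min I(x)$ there is no reason that pushing in direction $g^{\min I(x)}$ at a corner moves the process back toward $\bar E_0$, and the whole construction can run away from the domain.

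Second, and more seriously, you never prove that the constructed process stays in $\bar E_0$, and your closing claim that $Y\in D_{\bar E_0}[0,\infty)$ ``follows automatically\ldots since $F_1=\partial E_0$'' is circular: Remark \ref{stsp} applies to solutions of the controlled problem for $(A,E_0,B,\Xi)$ with $\Xi\subset\partial E_0\times U$, but your candidate $\Lambda_1=\sum_i dl_i\,\delta_{g^i(Y)}$ is supported on $\Xi$ only if the $l_i$ increase solely when $Y\in\partial E_0$ --- which is exactly what needs to be shown, since the patchwork problem is posed on a partition of a neighborhood of $\bar E_0$ and only guarantees $l_i$ grows on $\bar E_i$. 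The paper closes this gap with the key step you are missing: because $\phi\equiv 0$ on $\bar E_0$ and $\langle\nabla\phi,g^i\rangle\le 0$ on each $\tilde E_i$ by construction, the process
\[
M_\phi(t)=\phi(Y(t))-\sum_i\int_0^t\rho(Y(s))\langle\nabla\phi(Y(s)),g^i(Y(s))\rangle\,dl_i(s)
\]
is a nonnegative martingale started at $0$, hence identically $0$, forcing $\phi(Y(t))=0$ and therefore $Y(t)\in\bar E_0$ for all $t$. Only then do the $E_i$ collapse to subsets of $\partial E_0$ and the conversion to the controlled problem become legitimate. Your attributions of Conditions \ref{dircond}b) and c) are also off: b) is used in Lemma \ref{takeoff} (to show $\lambda_0(t)>0$), not in this existence proof, while c) is what makes the $\phi$-based partition cover the boundary neighborhood. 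Finally, the paper does not need your bounded-generator approximation scheme at this stage; it invokes Lemma 1.1 of \cite{Kur90} directly for existence of the patchwork solution on the enlarged compact state space.
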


\begin{proof}
Let $\chi :\R\rightarrow\R$ be a $C^{\infty}$ function 
such that $\chi (r)=0$ for $r\leq 0$, $\chi (r)=1$ for $r\geq 1$, $
\chi'(r)>0$ 
for $0<r<1$, and define 
\begin{equation}\phi (x)\equiv\sum_{i=1}^m\chi (-\psi_i(x)).\label{phi}\end{equation}
For $x\in\partial E_0$ and $I\in {\cal I}(x)$,  let
\[N^I_1(x)=\{n:\,n=\sum_{i\in I}\eta_in^i(x),\,\eta_i\geq 0,\,\sum_{
i\in I}\eta_i=1\},\]
and define
\[\beta\equiv\inf_{x\in\partial E_0}\min_{I\in {\cal I}
(x)}\inf_{n\in N_1^I(x)}\max_{j\in I}\,\,\langle n,g^j(x)\rangle 
.\]
By Condition \ref{dircond}c) and compactness,  
\begin{equation}\beta >0.\label{beta}\end{equation}

Let $0<\epsilon_0\leq 1$ be sufficiently small so that for all $i
=1,...,m$, 
\[\inf_{x\in (\overline {E_0})^c:\,d(x,\overline {E_0})\leq\epsilon_
0}\psi_i(x)>-1,\inf_{x\in (\overline {E_0})^c:\,d(x,\overline {E_
0})\leq\epsilon_0}|\nabla\psi_i(x)|>0,\]
and  for $|x-z|\leq\epsilon_0$, $d(x,\partial E_0^i\cap\partial E_
0)\leq\epsilon_0,$ $d(z,\partial E_0^i\cap\partial E_0)\leq\epsilon_
0$, 
 
\[|g^i(x)-g^i(z)|\leq\frac {\beta}4\quad\mbox{\rm and}\quad |n^i(
x)-n^i(z)|\leq\frac {\beta}4.\]
Then, in particular, by $(\ref{closure})$, for $x\in (\overline {
E_0})^c$, $d(x,\overline {E_0})\leq\epsilon_0$,  
\begin{equation}\phi (x)>0,\quad\sum_{i\in I(x)}\chi'(-\psi_i(x))
|\nabla\psi_i(x)|>0.\label{phi-pos}\end{equation}

For $z\in\partial E_0$, let $\delta (z)$ be as in $(\ref{Iusc})$. 
By compactness, there exists $\delta_0>0$ such that, 
for every $x\in (\overline {E_0})^c$ with  $d(x,\overline {E_0})\leq
\delta_0$, there exists 
$z\in\partial E_0$ such that $|x-z|<\delta (z)$, hence $I(x)\in {\cal I}
(z)$. 

For each $j=1,...,m$, $x\in (\overline {E_0})^c$ with $d(x,\bar {
E}_0)\leq\delta_0\wedge\epsilon_0$, and 
$z\in\partial E_0$ with $|x-z|<\delta (z)$,
\begin{eqnarray*}
-\langle \nabla\phi (x),g^j(x)\rangle&=&\sum_{i\in I(x)}\chi'(-\psi_i(
x))|\nabla\psi_i(x)|\,\frac {\langle \nabla\psi_i(x),g^j(x)\rangle}{|\nabla\psi_
i(x)|}\\
&=&\sum_{i\in I(x)}\chi'(-\psi_i(x))|\nabla\psi_i(x)|\,\langle n^i(x),
g^j(x)\rangle\\
&\geq&\sum_{i\in I(x)}\chi'(-\psi_i(x))|\nabla\psi_i(x)|\left(\langle 
n^i(z),g^j(z)\rangle -|\langle n^i(x),g^j(x)\rangle -\langle n^i(
z),g^j(z)\rangle |\right)\\
&\geq&\sum_{i\in I(x)}\chi'(-\psi_i(x))|\nabla\psi_i(x)|\left(\langle n,g^
j(z)\rangle -\frac {\beta}2\right),\end{eqnarray*}
where 
\[n\equiv\frac 1{\sum_{i\in\stackrel {}I(x)}\chi'(-\psi_i(x))|\nabla\psi_
i(x)|}\sum_{i\in\stackrel {}I(x)}\chi'(-\psi_i(x))|\nabla\psi_i(x)|n^i
(z)\]
belongs to $N_1^{I(x)}(z)$ since $I(x)\in {\cal I}(z)$. 
$(\ref{beta})$ implies that for some $j\in I(x)$,  
\begin{equation}-\langle \nabla\phi (x),g^j(x)\rangle\geq\sum_{i\in I(
x)}\chi'(-\psi_i(x))|\nabla\psi_i(x)|\,\frac {\beta}2>0.\label{pos}\end{equation}

Define
\[E\equiv \{x:\,d(x,\bar {E}_0)\leq\delta_0\wedge\epsilon_0\}\]
and
\begin{equation}\tilde {F}_i=\{x\in E:\psi_i(x)\leq 0,\,\langle\nabla
\phi (x),\,g^i(x)\rangle\leq 0\}.\label{pwk-predom}\end{equation}
By $(\mbox{\rm \ref{pos}})$, each $x\in E-E_0$ 
is in at least one of the $\tilde {F}_i$, so defining 
\[\tilde {E}_1=\{x\in E:\psi_1(x)\leq 0,\,\langle\nabla\phi (x),\,
g^1(x)\rangle\leq 0\}\]
and
\[\tilde {E}_i=\{x\in E:\psi_i(x)\leq 0,\,\langle\nabla\phi (x),\,
g^i(x)\rangle\leq 0\}-\cup_{j<i}\tilde {E}_j,\quad i=2,\ldots ,m,\]
$E_0,\tilde {E}_1,\ldots ,\tilde {E}_m$ are disjoint and
\[E=E_0\cup\bigcup_{i=1}^m\tilde {E}_i.\]

Setting $\tilde {{\cal D}}=C^2(E)$, $\rho (x)=[1-\chi (\frac {d(x
,\bar {E}_0)}{\delta_0\wedge\epsilon_0})]$, $\tilde {A}f(x)=\rho 
(x)Af(x)$, and 
$\tilde {B}_if=\rho (x)\langle\nabla f(x),g^i(x)\rangle$, $\tilde {
A}$ and the $\tilde {B}_i$ are 
dissipative, and Lemma 1.1 of \cite{Kur90} yields that, 
for each $\nu\in {\cal P}(\bar {E}_0)$, there exists a solution, $
(Y,\lambda_0,l_1,...,l_m)$, 
of the patchwork martingale problem for 
$(\tilde {A},E_0,\tilde {B}_1,\tilde {E}_1,...\tilde {B}_m,\tilde {
E}_m)$ with initial distribution $\nu$.  
Then, for  $f\in\tilde {{\cal D}}$
\begin{eqnarray*}
M_f(t)&=&f(Y(t))-\int_0^tAf(Y(s))d\lambda_0(s)-\sum_{i=1}^m\int_0^
t\tilde {B}_if(Y(s))dl_i(s)\\
&=&f(Y(t))-\int_0^tAf(Y(s))d\lambda_0(s)-\sum_{i=1}^m\int_0^t\rho 
(Y(s))\langle \nabla f(Y(s),g^i(Y(s)\rangle dl_i(s)\end{eqnarray*}
is a $\{{\cal F}_t\}$-martingale. (We can write $A$ rather than $
\tilde {A}$ 
since $Af=\tilde {A}f$ on $\bar {E}_0$.)

Since $\phi$ is constant on $\bar {E}_0$, if $\phi$ were $C^2$, then
\begin{equation}M_{\phi}(t)=\phi (Y(t))-\sum_{i=1}^m\int_0^t\rho 
(Y(s))\langle \nabla\phi (Y(s),g^i(Y(s)\rangle dl_i(s)\label{testmgp}\end{equation}
would be a martingale.  Since we can approximate $\phi$ by $C^2$ 
functions $\{\phi^n\}$ in such a way that $\phi^n$ is constant on $
\bar {E}_0$ and 
$\nabla\phi^n\rightarrow \nabla\phi$ uniformly on $E$,  $M_{\phi}$ is a martingale even if  $
\phi$ 
is not $C^2$.  $M_{\phi}$ is a nonnegative 
martingale because $\langle\nabla\phi ,g^i\rangle\leq 0$ on $\bar{
\tilde {E}}_i$. If $Y(0)\in\bar {E}_0$, then $M_{\phi}(0)=0$ 
so, as in the proof of Lemma 1.4 of \cite{Kur90}, 
$M_{\phi}(t)=0$ for all $t\geq 0$.  As all terms in $M_{\phi}$ are 
nonnegative, $\phi (Y(t))$ must be zero for all $t\geq 0$, and hence, 
by $(\ref{phi-pos})$, $Y(t)\in\bar {E}_0$ for all $t\geq 0$.  Therefore 
$(Y,\lambda_0,l_1,...,l_m)$ is a solution of the patchwork martingale 
problem for $(A,E_0,B_1,E_1,...B_m,E_m)$, where 
\begin{eqnarray}
&E_1\equiv \{x\in\partial E_0:\,\psi_1(x)=0\}&\non\\
&E_i\equiv \{x\in\partial E_0:\,\psi_1(x)>0,\,..,\,\psi_{i-1}(x)>
0,\,\psi_i(x)=0\},\quad i=2,...,m,&\label{rfpwmp}\\
&B_if(x)\equiv\langle\nabla f(x),g^i(x)\rangle &\non.
\end{eqnarray}
If we define 
\begin{equation}\Lambda_1([0,t]\times C)\equiv\sum_{i=1}^m\int_0^
t{\bf 1}_C(g^i(Y(s))dl_i(s)\label{pwk-to-ctrl}\end{equation}
then $(Y,\lambda_0,\Lambda_1)$ is a solution of the controlled 
martingale problem for $(A,E_0,B,\Xi )$.
\end{proof}

Let $(Y,\lambda_0,\Lambda_1)$ be a solution of the controlled martingale 
problem for $(A,E_0,B,\Xi )$. It is easy to verify that $Y$ 
is continuous and 
\begin{equation}Y(t)-Y(0)-\int_0^tb(Y(s))d\lambda_0(s)-\int_{[0,t
]\times U}u\,\Lambda_1(ds\times du)\equiv M(t)\label{Y}\end{equation}
is a continuous martingale with 
$[M](t)=\int_0^t(\sigma\sigma^T)(Y(s))d\lambda_0(s)$.

The following lemma is the analog of Lemma 3.1 of 
\cite{DW95} and its proof is based on similar arguments. 

\begin{lemma}\label{takeoff}
For every solution $(Y,\lambda_0,\Lambda_1)$ of the controlled martingale 
problem for $(A,E_0,B,\Xi )$ defined by $(\ref{rfclmp})$,
 $\lambda_0(t)>0$ for all $t>0$, a.s.. 
\end{lemma}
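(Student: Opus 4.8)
The plan is to adapt the argument of Lemma 3.1 of \cite{DW95} to the present setting, using the auxiliary function $\phi$ constructed in the proof of Theorem \ref{ctrl-exist} together with Condition \ref{dircond}. Suppose, for contradiction, that with positive probability $\tau(0)\equiv\inf\{t:\lambda_0(t)>0\}>0$, i.e.\ $Y$ stays on $\partial E_0$ for a positive length of time starting from $Y(0)$. On the event $\{\tau(0)>0\}$, on the interval $[0,\tau(0))$ we have $d\lambda_0=0$, so by \eqref{mgp1} and the representation \eqref{Y}, $Y$ evolves purely through the boundary term: $Y(t)-Y(0)=\int_{[0,t]\times U}u\,\Lambda_1(ds\times du)$ and the martingale part $M$ is constant (its bracket $\int_0^t(\sigma\sigma^T)(Y(s))d\lambda_0(s)$ vanishes). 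Thus on $[0,\tau(0))$, $Y$ is of bounded variation, moving in directions $u\in G(Y(s))$.

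First I would localize near the starting point: since $G$ and the $g^i$ are continuous and $\partial E_0$ is compact, I can work on a small ball $B_\rho(Y(0))$ on which the geometry is nearly constant. The key is to use Condition \ref{dircond}b), which gives a unit vector $e\equiv e(Y(0))\in N(Y(0))$ with $\langle g,e\rangle>0$ for all $g\in G(Y(0))\setminus\{0\}$; by continuity and compactness of $\Xi$, there is $\rho>0$ and $c>0$ such that $\langle u,e\rangle\ge c|u|$ for all $(x,u)\in\Xi$ with $|x-Y(0)|\le\rho$. Then, defining $\tau_\rho=\inf\{t:|Y(t)-Y(0)|\ge\rho\}$ and testing with the linear function $f(x)=\langle x,e\rangle$ (suitably truncated to lie in $C_c^2$), for $t<\tau(0)\wedge\tau_\rho$ we get $\langle Y(t)-Y(0),e\rangle=\int_{[0,t]\times U}\langle u,e\rangle\Lambda_1(ds\times du)\ge c\,\lambda_1(t)=c\,t$, since $\lambda_0\equiv0$ there forces $\lambda_1(t)=t$. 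On the other hand $\langle Y(t)-Y(0),e\rangle\le|Y(t)-Y(0)|$, which is bounded, and more importantly one should check $Y$ cannot leave $\bar E_0$ in the direction $e$: as $e\in N(Y(0))$ points into $\bar E_0^{\,c}$ in the sense that moving in direction $e$ from a boundary point exits the domain (since $N(x)$ is the cone generated by inward normals, $-e$ is the exit direction — care is needed with signs here).

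The clean contradiction I would aim for is the following: on $\{\tau(0)>0\}$, the inequality $\langle Y(t)-Y(0),e\rangle\ge ct$ for all small $t>0$ forces $Y$ to move a definite distance into a half-space whose boundary is a supporting hyperplane of $\bar E_0$ at $Y(0)$ (with outward side determined by $e$), which is impossible since $Y(t)\in\bar E_0$ for all $t$. Making the ``supporting hyperplane'' statement precise is where the subtlety lies: $\bar E_0=\cap_i\overline{E_0^i}$ with $C^1$ (not convex) boundaries, so I would localize to $\rho$ small enough that each relevant $\overline{E_0^i}$ lies, up to $o(\rho)$ error, on one side of its tangent hyperplane at $Y(0)$, and use that $e=\sum_{i\in I(Y(0))}\eta_in^i(Y(0))$ with $\eta_i\ge0$ to conclude $\langle z-Y(0),e\rangle\le o(\rho)\,|z-Y(0)|$ for $z\in\bar E_0\cap B_\rho(Y(0))$. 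Choosing $\rho$ so the $o(\rho)$ term beats the linear growth rate $c$ then yields the contradiction.

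The main obstacle will be handling the non-convexity and the corner structure: unlike in \cite{DW95}, where the polyhedral faces give exact supporting half-spaces, here I must control the $C^1$ curvature error uniformly, and I must ensure the relevant index set stays $I(Y(0))$ (which is where \eqref{Iusc} and the upper semicontinuity of $I$ enter — if $Y$ wanders into a region where additional constraints $\psi_j$ become active, the cone $G$ changes). Condition \ref{dircond}b) applied at the single point $Y(0)$, combined with the continuity estimate on $\Xi$ near $Y(0)$, should be exactly enough to push through the argument before $Y$ can escape the small ball; but verifying that $\tau_\rho\wedge\tau(0)$ is bounded below by a deterministic positive time (so that ``for all small $t$'' is not vacuous) requires the a priori bound $|Y(t)-Y(0)|\le\int_0^t|u|\,d\lambda_1\le t\to0$, which handles it. I would also need the truncated test function argument to be clean: since ${\cal D}=C_c^2(\R^d)$, one takes $f$ agreeing with $\langle x,e\rangle$ on $B_{2\rho}(Y(0))$ and checks $Bf(x,u)=\langle\nabla f(x),u\rangle=\langle e,u\rangle$ there, so the martingale \eqref{mgp1} gives the desired identity up to the stopping time $\tau_\rho$.
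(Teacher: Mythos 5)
Your lower bound is sound: localizing Condition \ref{dircond}b) near $Y(0)$ via the upper semicontinuity $(\ref{Iusc})$ and the continuity of the $g^i$ does give $\langle Y(t)-Y(0),e\rangle\ge c\,t$ for small $t<\tau (0)$, with $e=e(Y(0))$, exactly as in the paper. The genuine gap is in your upper bound. By $(\ref{Ndef})$, $N(Y(0))$ is generated by the \emph{inward} normals $n^i=\nabla\psi_i/|\nabla\psi_i|$, so $e$ points (approximately) into $\bar {E}_0$, not out of it, and the hyperplane through $Y(0)$ normal to $e$ is not a supporting hyperplane of $\bar {E}_0$. Since $\psi_i(z)\ge 0=\psi_i(Y(0))$ for $z\in\bar {E}_0$, the $C^1$ expansion yields only the one\mbox{-}sided estimate $\langle z-Y(0),n^i(Y(0))\rangle\ge -o(|z-Y(0)|)$, hence $\langle z-Y(0),e\rangle\ge -o(|z-Y(0)|)$ --- the same direction as your lower bound, so no contradiction results. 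Concretely, for $E_0$ the positive quadrant in $\R^2$ and $e=(1,1)/\sqrt 2$, the point $z=\epsilon (1,1)\in\bar {E}_0$ has $\langle z,e\rangle =|z|$, and even the boundary point $z=(\epsilon ,0)$ has $\langle z,e\rangle =|z|/\sqrt 2$; your claimed inequality $\langle z-Y(0),e\rangle\le o(\rho )|z-Y(0)|$ fails. The sentence in which you hedge about signs is precisely where the argument breaks: $Y(t)\in\bar {E}_0$ is perfectly compatible with $\langle Y(t)-Y(0),e\rangle\ge ct$.

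The estimate one actually needs is the two\mbox{-}sided bound $|\langle Y(s)-Y(t),n^i(Y(t))\rangle |=o(s-t)$, which holds only if $Y$ remains on each individual surface $\partial E_0^i$, $i\in I(Y(t))$, i.e.\ only if the active index set stays \emph{equal} to $I(Y(t))$ rather than merely contained in it. That cannot be arranged at $t=0$: constraints active at $Y(0)$ may instantly become inactive, after which $Y$ moves strictly into some $E_0^j$ and $\langle Y(s)-Y(0),n^j(Y(0))\rangle$ can be of order $s$. The paper's key device, which your proposal is missing, is to let $k_0$ be the largest $k$ with $|I(Y(r))|\ge k$ for all $r\in [0,\tau (0))$ and to choose a time $t$ with $|I(Y(t))|=k_0$; then $(\ref{Iusc})$ and the continuity of $Y$ force $I(Y(r))=I(Y(t))$ for $r\in [t,s]$ with $s$ close to $t$, so $Y$ stays on every $\partial E_0^j$, $j\in I(Y(t))$, the chord $Y(s)-Y(t)$ is tangent to each of them up to $o(s-t)$, and $|\langle Y(s)-Y(t),e\rangle |=o(s-t)$ for $e=e(Y(t))$ contradicts the lower bound $\ge\frac 12\inf_{u\in G(Y(t))\cap U}\langle u,e\rangle (s-t)$ coming from Condition \ref{dircond}b) applied at $Y(t)$. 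You flagged the index\mbox{-}set issue as an obstacle, but the resolution is this minimal\mbox{-}cardinality selection of an interior time $t$, not a supporting\mbox{-}hyperplane argument at $Y(0)$.
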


\begin{proof}\ 
By (\ref{Y}), for $\tau (0)=\inf\{t\geq 0:\,\lambda_0(t)>0\}$,
\begin{equation}Y(t\wedge\tau (0))=Y(0)+\int_{[0,t\wedge\tau (0)]\times U}u\,\Lambda_
1(ds\times du).\label{YLip}\end{equation}
For every path such that $\tau (0)>0$, 
$\lambda_1(t\wedge\tau (0))=t\wedge\tau (0)$, and we must have $Y
(t)\in\partial E_0$ for all 
$t\in [0,\tau (0))$. Setting, for $I\subset \{1,...,m\}$, 
\[\partial_IE_0\equiv \{x\in\partial E_0:\,I(x)=I\}\]
there must exist a $k$ such that 
\begin{equation}Y(t)\in\bigcup_{I:\,|I|\geq k}\partial_IE_0\label{yin}\end{equation}
for all 
$t\in [0,\tau (0))$. Let $k_0$ be the maximal such $k$, that is, $
k=k_0$ 
satisfies (\ref{yin}) and there exists $t\in [0,\tau (0))$ such that
 $|I(Y(t))|=k_0$. 

By $(\ref{Iusc})$ and the continuity of $Y$, for $s>t$ close 
enough to $t$, $I(Y(r))\subset I(Y(t))$ for all $r\in [t,s]$.  Since by 
definition of $k_0$, $|I(Y(r))|\geq k_0$, we must have 
$I(Y(r))=I(Y(t))$ for all $r\in [t,s]$.  

Since $\partial E_0^i$ is $C^1$, 
\[|\langle Y(s)-Y(t),n^i(Y(t))\rangle |=o(|Y(s)-Y(t)|),\quad\forall 
i\in I(Y(t)).\]
In addition, by (\ref{YLip}) and the fact that 
$\lambda_1(t)=\Lambda_1([0,t]\times U)$ is Lipschitz,
\[|Y(s)-Y(t)|\leq O(s-t),\]
so that
\[\big|\int_{(s,t]\times U}\langle u,n\rangle\,\Lambda_1(ds\times 
du)\big|=|\langle Y(s)-Y(t),n\rangle |=o(|s-t|),\quad\forall n\in 
N(Y(t)).\]
On the other hand, setting  
$G^{I(y)}(x)\equiv\big\{\sum_{i\in I(y)}\eta_ig^i(x),\,\eta_i\geq 
0\big\}$, $(\ref{Iusc})$ implies 
$G(Y(r))\subset G^{I(Y(t))}(Y(r))$ for all $r\in [t,s]$. Since the 
Hausdorff distance 
$d(G^{I(y)}(x)\cap U,G(y)\cap U)\rightarrow 0$ as $x\rightarrow y$, if $
s$ is close enough to $t$, by 
\ref{dircond} b) we have, for some $e\in N(Y(t))$, $|e|=1$, 
\[\langle Y(s)-Y(t),e\rangle =\int_{(t,s]\times U}\langle u,e\rangle\,
{\bf 1}_{\Xi}(Y(r),u)\,\Lambda_1(dr\times du)\geq\frac 12\inf_{u\in 
G(Y(t))\cap U}\langle u,e\rangle (s-t).\]

Consequently, by contradiction, $\tau (0)$ must be zero almost 
surely.
\end{proof}

\begin{lemma}\label{standass}
The controlled martingale problem for $(A,E_0,B,\Xi )$ defined by $
(\ref{rfclmp})$ 
satisfies Condition \ref{cond1}. 
\end{lemma}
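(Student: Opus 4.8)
The plan is to verify the three parts of Condition~\ref{cond1} in turn, drawing on the results already established in this section. Recall that here the compact state space is $E=\{x:\,d(x,\bar E_0)\le\delta_0\wedge\epsilon_0\}$ (the set constructed in the proof of Theorem~\ref{ctrl-exist}, which is compact since $\bar E_0$ is compact), while ${\cal D}=C^2_c(\mathbb R^d)$, and $(1,0)\in A$, $(1,0)\in B$ hold trivially because both operators act through $\nabla$ and $D^2$.

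For part~(a), I would argue that the restrictions to $E$ of functions in $C^2_c(\mathbb R^d)$ form a subalgebra of $C(E)$ that contains the constants (multiply a constant by a $C^\infty_c$ bump equal to $1$ on a neighbourhood of $E$) and separates the points of $E$ (the coordinate functions, truncated by such a bump). The Stone--Weierstrass theorem then gives density of ${\cal D}$ in $C(E)$, which is part~(a).

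For part~(b), Theorem~\ref{ctrl-exist} provides, for every $\nu\in{\cal P}(\bar E_0)$, a solution $(Y,\lambda_0,\Lambda_1)$ of the controlled martingale problem for $(A,E_0,B,\Xi)$ defined by (\ref{rfclmp}) with initial distribution $\nu$, so $\Pi_\nu\neq\emptyset$. In particular $\delta_x\in{\cal P}_0$ for every $x\in\bar E_0$, and since $F_2$ is the closure of $\cup_{\nu\in{\cal P}_0}\mathrm{supp}(\nu)$ (Lemma~\ref{clin}), it follows that $F_2\supset\bar E_0$. For part~(c), the key input is Lemma~\ref{takeoff}: every solution satisfies $\lambda_0(t)>0$ for all $t>0$ a.s., i.e.\ $\tau(0)=0$ a.s., so $P\{\tau(0)<\infty\}=1$ for each $P\in\Pi$. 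Since part~(a) supplies the hypothesis of Lemma~\ref{picompact}, Lemma~\ref{laminf} then yields $\lim_{t\to\infty}\lambda_0(t)=\infty$ a.s.\ for every $P\in\Pi$, which is part~(c).

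I do not expect a genuine obstacle here: the verification is bookkeeping, and all the substantive work already resides in Theorem~\ref{ctrl-exist} (existence for arbitrary initial distributions, via a patchwork martingale problem under Condition~\ref{dircond}) and in Lemma~\ref{takeoff} (the ``takeoff'' estimate, which is where Condition~\ref{dircond}~b) enters). The step demanding the most attention is part~(c), only in that it chains together two prior lemmas; and part~(a) requires one to be careful about \emph{which} compact set $E$ is intended when asserting density of $C^2_c(\mathbb R^d)$.
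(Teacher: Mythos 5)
Your proposal is correct and follows exactly the paper's route: part (a) is immediate (the paper simply says "clearly satisfied", whereas you spell out the Stone--Weierstrass argument), part (b) is Theorem \ref{ctrl-exist}, and part (c) chains Lemma \ref{takeoff} with Lemma \ref{laminf}. No difference in substance.
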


\begin{proof}
Condition \ref{cond1} a) is clearly satisfied. 
Condition \ref{cond1} b) is satisfied by Theorem  
\ref{ctrl-exist}, while Condition \ref{cond1} c) is satisfied 
by Lemma \ref{takeoff} and Lemma \ref{laminf}. 
\end{proof}

\begin{theorem}\label{rfcmp-exist}
For each $\nu\in {\cal P}(\bar {E}_0)$ there exists a natural solution of the 
constrained martingale problem for $(A,E_0,B,\Xi )$ defined by $(\ref{rfclmp})$. 
\end{theorem}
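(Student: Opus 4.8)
The plan is to produce, for a fixed $\nu\in\mathcal P(\bar E_0)$, a single solution $(Y,\lambda_0,\Lambda_1)$ of the controlled martingale problem for $(A,E_0,B,\Xi)$ with initial distribution $\nu$ that satisfies the hypotheses of Lemma~\ref{invert}, and then to invoke Corollary~\ref{const-exist}(a). Existence of such a solution (disregarding the hypotheses) is Theorem~\ref{ctrl-exist}. Moreover $\nu\in\mathcal Q_0$ for every $\nu\in\mathcal P(\bar E_0)$: by Lemma~\ref{takeoff} every solution of this controlled martingale problem has $\lambda_0(t)>0$ for all $t>0$ a.s., so Theorem~\ref{ctrl-exist} together with Remark~\ref{init} gives $\mathcal Q_0=\mathcal P(\bar E_0)$. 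Since $\lambda_0(t)\to\infty$ a.s.\ already holds (Lemma~\ref{standass}, i.e.\ Condition~\ref{cond1}(c)), the only remaining content of the conclusion of Lemma~\ref{invert} is the bound $E[\tau(t)]<\infty$, and it is precisely this bound that upgrades the $\{\mathcal G_t\}$-local martingale of Theorem~\ref{const} to a genuine martingale (take $\eta_n=n$, as in the proof of Corollary~\ref{const-exist}(a)).

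So the whole theorem reduces to exhibiting $f\in\mathcal D=C^2_c(\R^d)$ and $\epsilon>0$ with
\[
\int_{[0,t]\times U}Bf(Y(s),u)\,\Lambda_1(ds\times du)\ \ge\ \epsilon\,\lambda_1(t).
\]
Since $\Lambda_1$ is carried by $\Xi$ (by (\ref{Lam1supp})) and $Bf(x,u)=\langle\nabla f(x),u\rangle$, it suffices that $\langle\nabla f(x),u\rangle\ge\epsilon$ for all $(x,u)\in\Xi$, i.e.\ for all $x\in\partial E_0$ and all $u\in G(x)$ with $|u|=1$. Writing $u=\sum_{i\in I(x)}\eta_i g^i(x)$ with $\eta_i\ge0$ and using $|g^i(x)|=1$ on $\partial E_0^i$, one has $\sum_i\eta_i\ge|u|=1$, so it is enough to find $f$ with
\[
\langle\nabla f(x),g^i(x)\rangle\ \ge\ \epsilon\qquad\mbox{for every $x\in\partial E_0$, $i\in I(x)$}.
\]

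Such an $f$ should be built from Condition~\ref{dircond}(b): at every $x\in\partial E_0$ there is $e(x)\in N(x)$, $|e(x)|=1$, with $\langle g^i(x),e(x)\rangle>0$ for all $i\in I(x)$. Using the continuity of the $g^i$ together with the upper semicontinuity of $I(\cdot)$ along $\partial E_0$ (the argument behind (\ref{Iusc}), valid here because each $\{\psi_i=0\}$ is closed), cover the compact set $\partial E_0$ by finitely many balls $B_k=B(x_k,r_k)$ such that $I(x)\subset I(x_k)$ and $\langle g^i(x),e(x_k)\rangle\ge\epsilon_k>0$ for all $x\in B_k\cap\partial E_0$ and $i\in I(x_k)$; then $\min_k\epsilon_k=:\epsilon>0$. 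I expect the construction of $f$ itself to be the main obstacle: a smooth partition of unity subordinate to $\{B_k\}$ yields a \emph{vector field} with the desired inner products along $\partial E_0$, but not obviously a gradient, and the cross terms obtained by differentiating the partition of unity do not cancel. Turning the patched direction field into an actual gradient of some $C^2$ function seems to require an inductive construction over the strata of $\partial E_0$ (first the points of highest multiplicity, i.e.\ largest $|I(x)|$, then lower ones), arranging that near each stratum only local pieces compatible with Condition~\ref{dircond}(b) there are active; this is where the piecewise-smooth structure and the strictness/compactness in Condition~\ref{dircond} enter, and it generalizes the analogous (nontrivial) construction for convex polyhedra. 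Finally multiply by a cutoff equal to $1$ on a neighborhood of $\bar E_0$ to land in $C^2_c(\R^d)$ without altering $\nabla f$ near $\partial E_0$.

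With this $f$, Lemma~\ref{invert} yields $E[\tau(t)]<\infty$ for all $t$, so (\ref{mgp2}) is a true $\{\mathcal G_t\}$-martingale and Corollary~\ref{const-exist}(a) (with $\nu\in\mathcal Q_0=\mathcal P(\bar E_0)$) provides a natural solution $X=Y\circ\tau$ of the constrained martingale problem for $(A,E_0,B,\Xi)$ with initial distribution $\nu$. (If one only wants a natural solution of the constrained \emph{local} martingale problem, no test function is needed: Lemma~\ref{takeoff} and Lemma~\ref{increasing} give that $\lambda_0$ is a.s.\ strictly increasing, and Corollary~\ref{const-exist}(b) applies directly; the function $f$ is required only to remove the word ``local''.)
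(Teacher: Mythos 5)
Your main line of argument has a genuine gap at exactly the point you flag: the global test function $f\in C^2_c(\R^d)$ with $\langle\nabla f(x),g^i(x)\rangle\ge\epsilon$ for all $x\in\partial E_0$ and $i\in I(x)$ is never constructed, and under Condition~\ref{dircond} alone it cannot be constructed in general. The requirement that a \emph{single} smooth function have gradient lying, uniformly, in the dual cone of $G(x)$ at every boundary point is a global, Dupuis--Ishii-type hypothesis, essentially of the kind the paper is at pains to avoid: Example~\ref{ex} exhibits a domain satisfying Condition~\ref{dircond} for which no convex compact set satisfies $(3.7)$ of \cite{DI93}, and the remark following Lemma~\ref{invert} offers the existence of such an $f$ only for smooth domains with nontangential reflection. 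Condition~\ref{dircond}(b) is purely pointwise; your own discussion concedes that a partition of unity produces a vector field, not a gradient, and the proposed ``inductive construction over the strata'' is not carried out and is not available in this generality. So the route through Lemma~\ref{invert} and Corollary~\ref{const-exist}(a) does not close.

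The correct proof is the one you relegate to your final parenthesis, and it is verbatim the paper's proof: by Lemma~\ref{takeoff} every solution of the controlled martingale problem $(\ref{rfclmp})$ has $\lambda_0(t)>0$ for all $t>0$ a.s., hence by Lemma~\ref{increasing} $\lambda_0$ is a.s.\ strictly increasing, and Corollary~\ref{const-exist}(b) applies. This yields a natural solution of the constrained \emph{local} martingale problem, which is what Definition~\ref{cmgpb} (with its ``(local)'') and the rest of Section~\ref{reflect} require; the naturalness of the solution is a property of the time-change construction and does not depend on upgrading $(\ref{mgp2})$ to a true martingale. In short: delete the attempted verification of $(\ref{invertf})$, promote the parenthesis to the proof, and note that no integrability of $\Lambda([0,t]\times\Xi)$ is being claimed.
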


\begin{proof}
By Lemma \ref{takeoff} and 
Lemma \ref{increasing}, Corollary \ref{const-exist} b) applies. 
\end{proof}
\vskip.1in

As mentioned at the beginning of this section, a 
reflecting diffusion in $\bar {E}_0$ with direction of reflection $
g^i$ 
on $\{x\in\partial E_0:\,\psi_i(x)=0,\,\psi_j(x)>0,\mbox{\rm \ for }
j\neq i\}$, $i=1,...,m$, 
is often defined as a weak solution 
of a stochastic differential equation with reflection of 
the form  
\begin{eqnarray}
&X(t)=X(0)+\int_0^tb(X(s))ds+\int_0^t\sigma (X(s))dW(s)+\int_0^t\gamma 
(s)\,d\lambda (s),\quad t\geq 0,&\non\\
&\quad\gamma (t)\in G(X(t)),\quad |\gamma (t)|=1,\quad d\lambda -
a.e.,\quad t\geq 0,&\label{SDER}\\
&X(t)\in\bar {E}_0,\quad\lambda (t)=\int_0^t{\bf 1}_{\partial E_0}
(X(s))d\lambda (s),\quad t\geq 0.&\non\end{eqnarray}

\begin{definition}
$X$, defined on some probability space,
is a weak solution of $(\ref{SDER})$ if there are 
$\lambda$ a.s.  continuous and nondecreasing, $\gamma$ a.s measurable 
and a standard Brownian motion $W$, all 
defined on the same probability space as $X$, 
such that $(X,\gamma ,\lambda )$ is compatible with $W$ (i.e. 
$W(t+\cdot )-W(t)$ is independent of ${\cal F}_t^{W,X,\gamma ,\lambda}$, where 
$\{{\cal F}^{W,X,\gamma ,\lambda}_t\}$ is the filtration generated by 
$(W,X,\gamma ,\lambda )$) and $(\ref{SDER})$ is satisfied. 
\end{definition}

\begin{theorem}\label{equiv}
Every weak solution of $(\ref{SDER})$ is a 
natural solution of the constrained martingale problem 
for $(A,E_0,B,\Xi )$ defined by $(\ref{rfclmp})$. 

Conversely, for every natural solution, $X$, of 
the constrained martingale problem 
for $(A,E_0,B,\Xi )$ there exists a weak solution of 
$(\ref{SDER})$ with the same distribution as $X$.
\end{theorem}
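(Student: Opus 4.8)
The plan is to prove the two inclusions separately, using Proposition \ref{nat-char} for the first and the time-change construction of Section \ref{CMP} together with a martingale representation theorem for the second. For the first assertion, let $X$ be a weak solution of $(\ref{SDER})$ carried by a probability space with Brownian motion $W$, direction process $\gamma$ and boundary process $\lambda$, and write $\{{\cal F}_t\}\equiv\{{\cal F}^{W,X,\gamma,\lambda}_t\}$. I would set
\[\Lambda([0,t]\times C)\equiv\int_0^t{\bf 1}_C\big(X(s),\gamma(s)\big)\,d\lambda(s),\qquad C\in{\cal B}(\Xi),\]
which is well defined and in ${\cal L}_\Xi$ because $(X(s),\gamma(s))\in\Xi$ for $d\lambda$-a.e.\ $s$ (as $\gamma(s)\in G(X(s))$, $|\gamma(s)|=1$, and $\lambda$ increases only on $\partial E_0$) and $\Lambda([0,t]\times\Xi)=\lambda(t)<\infty$. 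Applying It\^o's formula to $f\in{\cal D}=C^2_c(\R^d)$ and using the form $(\ref{rfclmp})$ of $A$ and $B$, the expression $(\ref{mgp2})$ equals $\int_0^t\nabla f(X(s))^{T}\sigma(X(s))\,dW(s)$, which is an $\{{\cal F}_t\}$-martingale since $W$ is a Brownian motion for $\{{\cal F}_t\}$ (compatibility) and $\sigma,\nabla f$ are bounded; hence $X$, with associated measure $\Lambda$, solves the constrained martingale problem, and $X\in D_{\bar E_0}[0,\infty)$ since it is continuous and stays in $\bar E_0$. To see $X$ is natural I would invoke Proposition \ref{nat-char}: $\Lambda([0,\cdot]\times\Xi)=\lambda$ is continuous, and $\Lambda$ is carried by $\{(s,x,u):x=X(s)\}$, so $(\ref{concent})$ holds (both sides equal $\int_0^t h(X(s),\gamma(s))\,d\lambda(s)$).

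For the converse, let $X=Y\circ\tau$ be a natural solution, with $(Y,\lambda_0,\Lambda_1)$ a solution of the controlled martingale problem for $(\ref{rfclmp})$, $\tau$ as in $(\ref{tau})$, and $\Lambda$ as in Theorem \ref{const}. By Lemma \ref{takeoff} and Lemma \ref{increasing}, $\lambda_0$ is strictly increasing; since $\lambda_1$ is Lipschitz-$1$ and $\lambda_0(t)\to\infty$, $\lambda_0$ is a continuous increasing bijection of $[0,\infty)$ with $\tau=\lambda_0^{-1}$, so $\tau(0)=0$, $X(0)=Y(0)$, and $Y(s)=X(\lambda_0(s))$. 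Evaluating the decomposition $(\ref{Y})$ of $Y$ at $\tau(t)$, changing variables $s=\tau(r)$ (so that $d\lambda_0(s)=dr$ and $Y(\tau(r))=X(r)$), and using the time-change-of-martingales argument already carried out in the proof of Theorem \ref{const}, I obtain
\[X(t)=X(0)+\int_0^t b(X(r))\,dr+\int_{[0,t]\times\Xi}u\,\Lambda(ds\times dx\times du)+\bar M(t),\]
where $\bar M(t)\equiv M(\tau(t))$ is a continuous $\{{\cal F}_{\tau(t)}\}$-local martingale (in fact a true martingale, as $\sigma$ is bounded) with $[\bar M](t)=\int_0^t(\sigma\sigma^{T})(X(r))\,dr$.

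It remains to replace the relaxed boundary term by $\int_0^t\gamma(s)\,d\ell(s)$ and to produce the driving Brownian motion. Writing $\lambda(t)\equiv\Lambda([0,t]\times\Xi)$, which is continuous and nondecreasing, I would take a predictable disintegration $\Lambda(ds\times dx\times du)=\delta_{X(s)}(dx)\,\rho_s(du)\,d\lambda(s)$ with $\rho_s\in{\cal P}(U)$: the concentration at $x=X(s)$ holds because, by Theorem \ref{const}, $\Lambda$ is the push-forward of $\Lambda_1$ under $(s,u)\mapsto(\lambda_0(s),Y(s),u)$ and $Y(\tau(t))=X(t)$, while $\rho_s$ is carried by $G(X(s))\cap U$ with $X(s)\in\partial E_0$ for $d\lambda$-a.e.\ $s$ by $(\ref{Lam1supp})$. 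Set $m(s)\equiv\int_U u\,\rho_s(du)$. Since $G(X(s))$ is a convex cone and, by Condition \ref{dircond} b), $\langle u,e(X(s))\rangle>0$ for every $u\in G(X(s))\cap U$, we have $\langle m(s),e(X(s))\rangle>0$, hence $m(s)\neq 0$ and $\gamma(s)\equiv m(s)/|m(s)|\in G(X(s))\cap U$. With $\ell(t)\equiv\int_0^t|m(s)|\,d\lambda(s)$ (continuous, nondecreasing, increasing only when $X\in\partial E_0$) one gets $\int_{[0,t]\times\Xi}u\,\Lambda=\int_0^t\gamma(s)\,d\ell(s)$. Finally, since $\bar M$ is a continuous local martingale with $[\bar M](t)=\int_0^t(\sigma\sigma^{T})(X)\,dr$, the standard martingale representation theorem --- enlarging the probability space with an independent Brownian motion if $\sigma$ is degenerate --- yields a Brownian motion $W$ with $\bar M(t)=\int_0^t\sigma(X(s))\,dW(s)$, and $W$ is a Brownian motion for a filtration containing that generated by $(X,\gamma,\ell)$, so $(X,\gamma,\ell)$ is compatible with $W$. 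Thus $X$, together with $(W,\gamma,\ell)$, is a weak solution of $(\ref{SDER})$, and the law of $X$ is unchanged.

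The first direction is essentially routine once the It\^o computation and $(\ref{concent})$ are checked. I expect the main obstacle to be the converse: carefully justifying the change of variables under the time change (so that $\Lambda$ is genuinely carried by $\{x=X(s)\}$ and the two clocks match), obtaining the disintegration $\rho_s$ in a jointly measurable, adapted way, and performing the martingale representation with the correct compatibility on the (possibly enlarged) probability space.
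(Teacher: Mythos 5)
Your proposal is correct and follows essentially the same route as the paper: the first direction via the random measure $\Lambda([0,t]\times C)=\int_0^t{\bf 1}_C(X(s),\gamma(s))\,d\lambda(s)$ and Proposition \ref{nat-char}, and the converse via the semimartingale decomposition $(\ref{Y})$, the strict monotonicity of $\lambda_0$ from Lemmas \ref{takeoff} and \ref{increasing}, a predictable disintegration of the boundary measure whose mean direction is nonzero by Condition \ref{dircond} b), and martingale representation. The only cosmetic difference is that the paper disintegrates $\Lambda_1$ before the time change (using the predictable ${\cal P}(U)$-valued process $L$ from \cite{Kur91}) while you disintegrate $\Lambda$ after it; the two are equivalent.
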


\begin{proof}
Let $X$ be a weak solution of $(\ref{SDER})$. Setting 
\[\Lambda ([0,t]\times C)\equiv\int_0^t{\bf 1}_C(X(s),\gamma (s))
d\lambda (s),\quad C\in {\cal B}(\Xi ),\]
we see that $X$ is a solution of the constrained 
martingale problem for $(A,E_0,B,\Xi )$. Since 
$\Lambda ([0,\cdot ]\times\Xi )$ is continuous and $(\ref{concent}
)$ is 
satisfied, by Proposition \ref{nat-char}, $X$ is a natural 
solution. 

Conversely, let $X=Y\circ\tau$, where $(Y,\lambda_0,\Lambda_1)$ is a solution of the 
controlled martingale problem $(A,E_0,B,\Xi )$ with filtration 
$\{{\cal F}_t\}$ and $\tau$ is given by $(\ref{tau})$. Without loss of 
generality we can suppose $\{{\cal F}_t\}$ complete. Then (see 
\cite{Kur91}, page 141) there is a 
$\{{\cal F}_t\}$-predictable, ${\cal P}(U)$-valued process $L$ such that, in 
particular, 
\[\int_{[0,t]\times U}u\,\Lambda_1(ds\times du)=\int_0^t\int_Uu\,
L(s,du)\,d\lambda_1(s)=\int_0^t\int_{U\cap G(Y(s))}u\,L(s,du)\,d\lambda_
1(s).\]
Note that $\big|\int_{U\cap G(Y(s))}u\,L(s,du)\big|>0$ $d\lambda_
1$-a.e. 
by Condition \ref{dircond} b) and $(2.4)$ of \cite{Kur91}. Then, setting 
\[\tilde{\gamma }(s)\equiv\frac {\int_{U\cap G(Y(s))}u\,L(s,du)}{\big
|\int_{U\cap G(Y(s))}u\,L(s,du)\big|},\quad\tilde{\lambda}_1(t)\equiv
\int_0^t\big|\int_{U\cap G(Y(s))}u\,L(s,du)\big|d\lambda_1(s),\]
we see that $(\ref{Y})$ can be written as 
\[Y(t)=Y(0)+\int_0^tb(Y(s))d\lambda_0(s)+\int_0^t\tilde{\gamma }(
s)d\tilde{\lambda}_1(s)+M(t).\]
By Lemma \ref{takeoff} and Lemma 
\ref{increasing}, $\lambda_0$ is strictly increasing, therefore 
$\tau =(\lambda_0)^{-1}$ and $X$ satisfies 
\[X(t)=X(0)+\int_0^tb(X(s))ds+\int_0^t\gamma (s)d\lambda (s)+N(t)
,\]
where $\gamma\equiv\tilde{\gamma}\circ\tau$, $\lambda\equiv\tilde{
\lambda}_1\circ\tau$ and $N\equiv M\circ\tau$ is a continuous martingale with 
$[N](t)=\int_0^t(\sigma\sigma^T)(X(s))ds$. Then the assertion follows by 
classical arguments. 
\end{proof}

\begin{theorem}
For each $\nu\in {\cal P}(\bar {E}_0)$, there exists a strong Markov solution 
of $(\ref{SDER})$.  If uniqueness in distribution holds 
among strong Markov solutions of $(\ref{SDER})$, then it 
holds among all solutions.  
\end{theorem}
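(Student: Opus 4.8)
The plan is to combine the Markov selection results of Section \ref{selection} with the equivalence between the constrained martingale problem and the stochastic differential equation with reflection established in Theorem \ref{equiv}. The preliminary facts are already available: by Lemma \ref{standass} the controlled martingale problem for $(A,E_0,B,\Xi)$ defined by $(\ref{rfclmp})$ satisfies Condition \ref{cond1}; by Lemma \ref{takeoff} every solution $(Y,\lambda_0,\Lambda_1)$ of this controlled martingale problem has $\lambda_0(t)>0$ for all $t>0$ almost surely, whatever the initial distribution; and hence, by Lemma \ref{increasing}, $\lambda_0$ is almost surely strictly increasing for every solution. In particular the hypotheses of Lemma \ref{nonemp}, of Corollary \ref{M-const} b), and of Corollary \ref{Muniq} (in the formulation using condition b) of Corollary \ref{M-const}) are met for every $\nu\in{\cal P}(\bar{E}_0)$.

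For existence, fix $\nu\in{\cal P}(\bar{E}_0)$. Corollary \ref{M-const} b) provides a strong Markov, natural solution $X$ of the constrained (local) martingale problem with initial distribution $\nu$, and by its construction $X=Y\circ\tau$ for a solution $(Y,\lambda_0,\Lambda_1)$ of the controlled martingale problem with $\tau$ as in $(\ref{tau})$. The converse half of Theorem \ref{equiv}, whose proof starts precisely from such a triple $(Y,\lambda_0,\Lambda_1)$, then produces a weak solution of $(\ref{SDER})$ having the same law as $X$. Since the strong Markov property is a property of the law, this is a strong Markov solution of $(\ref{SDER})$ with initial distribution $\nu$.

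For the uniqueness transfer, note that Theorem \ref{equiv} identifies, for each fixed initial distribution $\nu$, the set of laws of weak solutions of $(\ref{SDER})$ with the set of laws of natural solutions of the constrained martingale problem, and under this identification the laws that are laws of strong Markov processes correspond to one another. Hence, if uniqueness in distribution holds among strong Markov solutions of $(\ref{SDER})$, then for each $\nu$ there is at most one law of a strong Markov natural solution of the constrained martingale problem with initial distribution $\nu$. Corollary \ref{Muniq} then yields uniqueness in distribution among all natural solutions of the constrained martingale problem with initial distribution $\nu$, and a last application of Theorem \ref{equiv} translates this into uniqueness in distribution among all weak solutions of $(\ref{SDER})$ with initial distribution $\nu$.

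Essentially all of this is bookkeeping; the analytic work is in the earlier results. The only points needing care are that the strong Markov natural solution delivered by the selection theorem is literally of the form $Y\circ\tau$ for a controlled-martingale-problem solution, so that the converse direction of Theorem \ref{equiv} applies verbatim, and that ``uniqueness among strong Markov solutions of $(\ref{SDER})$'' is read per initial distribution (reducing, if desired, to $\nu=\delta_x$, $x\in\bar{E}_0$, in the standard way) so that Corollary \ref{Muniq} can be invoked for each $\nu$. I do not expect any substantive obstacle beyond checking these compatibility points.
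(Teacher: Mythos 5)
Your proposal is correct and follows essentially the same route as the paper, whose proof simply cites Theorem \ref{rfcmp-exist}, Theorem \ref{equiv}, Corollary \ref{M-const} and Corollary \ref{Muniq}; your write-up is a faithful expansion of that chain, including the verification via Lemmas \ref{standass}, \ref{takeoff} and \ref{increasing} that the hypotheses of those corollaries hold. The two compatibility points you flag (natural solutions being literally of the form $Y\circ\tau$, and reading uniqueness per initial distribution) are exactly the right ones and are handled as you describe.
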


\begin{proof}
The assertion follows from Theorem \ref{rfcmp-exist}, 
Theorem \ref{equiv}, Corollary \ref{M-const} and 
Corollary \ref{Muniq}. 
\end{proof}

We conclude this section with the proof of the 
equivalence between the controlled martingale problem 
$(\ref{rfclmp})$ and the corresponding patchwork martingale 
problem $(\ref{rfpwmp})$ (see Definition \ref{pwmp}). This equivalence is 
a valuable tool. For instance, in the last step of the proof of Theorem 
\ref{ctrl-exist}) we have already used one direction of the 
equivalence, which is immediate to see, 
namely the fact that  every solution of the 
patchwork martingale problem yields a solution of the 
controlled martingale problem. 
On the contrary, the other direction of the equivalence 
is nontrivial and is proved in the following theorem.

\begin{theorem}\label{mgpeqiv}
For every solution $(Y,\lambda_0,\Lambda_1)$ of the controlled martingale problem for 
$(A,E_0,B,\Xi )$ defined by $(\ref{rfclmp})$ there exist $l_1,...,l_
m$ 
such that $(Y,\lambda_0,l_1,...,l_m)$ is 
a solution of the patchwork martingale for $(A,E_0,B_1,E_1,...B_m
,E_m)$ 
defined by $(\ref{rfpwmp})$. 
\end{theorem}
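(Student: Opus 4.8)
The plan is to build the $l_i$ by allocating the boundary ``clock'' $\lambda_1$ to the faces $\partial E_0^i$ according to where $Y$ currently sits, and then to reduce the patchwork martingale requirement to a single pointwise identity between reflection directions; the only substantial step will be that identity, which boils down to showing that $\lambda_1$ charges no ``corner'' of $\partial E_0$.

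First I would record the shape of an arbitrary solution $(Y,\lambda_0,\Lambda_1)$ of the controlled martingale problem for $(A,E_0,B,\Xi)$ of $(\ref{rfclmp})$. As noted after Theorem \ref{ctrl-exist}, $Y$ is continuous with values in $\bar E_0$, $\lambda_0(t)+\lambda_1(t)=t$ with $\lambda_1(t)=\Lambda_1([0,t]\times U)$, and by $(\ref{Lam1supp})$ the measure $\Lambda_1$ is supported on $\{(s,u):Y(s)\in\partial E_0,\ u\in G(Y(s))\cap U\}$, so $\lambda_1$ increases only while $Y\in\partial E_0$. With $I(x)=\{i:\psi_i(x)=0\}$ as in $(\ref{bdindset})$, the sets $E_1,\dots,E_m$ of $(\ref{rfpwmp})$ partition $\partial E_0$, and the unique $i$ with $x\in E_i$ equals $\min I(x)$. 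Then I would set
\[ l_i(t)\ \equiv\ \int_0^t{\bf 1}_{E_i}(Y(s))\,d\lambda_1(s),\qquad i=1,\dots,m. \]
These are nondecreasing, adapted to the filtration of the controlled problem, and increase only when $Y\in E_i\subset\bar E_i$; since $\sum_i{\bf 1}_{E_i}={\bf 1}_{\partial E_0}$ and $\lambda_1$ lives on $\{Y\in\partial E_0\}$, we get $\sum_i l_i=\lambda_1$, hence $\lambda_0(t)+\sum_i l_i(t)=t$. Thus all the structural requirements of Definition \ref{pwmp} hold.

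It remains to verify the martingale requirement. Subtracting the candidate patchwork expression from the controlled martingale $(\ref{mgp1})$ and using $B_if(x)=\langle\nabla f(x),g^i(x)\rangle$, $Bf(x,u)=\langle\nabla f(x),u\rangle$, one sees that the patchwork expression is an $\{{\cal F}_t\}$-martingale for every $f\in{\cal D}=C^2_c(\R^d)$ if and only if, for $d\lambda_1$-a.e. $s$,
\[ \sum_{i=1}^m{\bf 1}_{E_i}(Y(s))\,g^i(Y(s))\ =\ \bar u(s), \]
where $\bar u$ is the Radon--Nikodym density with respect to $\lambda_1$ of $s\mapsto\int_{[0,s]\times U}u\,\Lambda_1(dr\times du)$ (which exists since this process has Lipschitz modulus $\leq 1$ against $\lambda_1$). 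On the times $s$ with $|I(Y(s))|=1$ the set $G(Y(s))\cap U$ is the single point $g^i(Y(s))$, $\{i\}=I(Y(s))$, so $\bar u(s)=g^i(Y(s))$ and, as this same $i$ is the index with $Y(s)\in E_i$, the identity holds automatically. Hence the theorem reduces to the claim that, almost surely, $\lambda_1(\{s:|I(Y(s))|\geq 2\})=0$, i.e. the boundary clock charges no multi-face point.

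This last claim is the main obstacle, and is where Condition \ref{dircond} and the confinement $Y\in\bar E_0$ enter; I would prove it by contradiction through a Tanaka-type argument modeled on the proof of Lemma \ref{takeoff} (itself an analogue of Lemma~3.1 of \cite{DW95}). For each $i$, applying the controlled martingale property to $C^2_c$ functions coinciding with $\psi_i$ near $\bar E_0$ shows that $\psi_i(Y)$ is a nonnegative continuous semimartingale with finite-variation part $\int_0^\cdot A\psi_i(Y(s))\,d\lambda_0(s)+\int_0^\cdot|\nabla\psi_i(Y(s))|\,\langle n^i(Y(s)),\bar u(s)\rangle\,d\lambda_1(s)$; since the quadratic variation of $\psi_i(Y)$ does not charge $\{\psi_i(Y)=0\}$ (occupation-times formula), the restriction of this finite-variation part to $\{\psi_i(Y)=0\}$ equals $\frac12$ the local time of $\psi_i(Y)$ at $0$, hence is nondecreasing. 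Assuming $\int{\bf 1}_{\{|I(Y)|\geq 2\}}\,d\lambda_1>0$ with positive probability, one localizes near a time $s_0$ where $k=|I(Y(s_0))|\geq 2$ is maximal; upper semicontinuity of $I(Y(\cdot))$ from $(\ref{Iusc})$, the $C^1$-regularity of the $\partial E_0^i$ (so $Y$ moves essentially tangentially to each active face), and Condition \ref{dircond}b)--c) together with the estimate $(\ref{beta})$ (so that for the relevant sums of inward normals some active $g^j$ points strictly inward) force the net displacement of $Y$ on these times to be incompatible with $\bar u(s)\in G(Y(s))$ --- exactly the tension exploited in Lemma \ref{takeoff}. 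This contradiction yields the identity and completes the construction of $(Y,\lambda_0,l_1,\dots,l_m)$. (A routine point along the way: $\psi_i$ need not belong to ${\cal D}$, but only its behaviour near $\bar E_0$ is used, so a cutoff suffices.)
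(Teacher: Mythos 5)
Your structural setup is fine, but the core of your argument has a genuine gap, and it stems from your choice of $l_i$. By allocating \emph{all} of $d\lambda_1$ at time $s$ to the single face $E_{\min I(Y(s))}$, you are forced to prove that $\int_0^\cdot {\bf 1}_{\{|I(Y(s))|\geq 2\}}\,d\lambda_1(s)=0$ a.s., i.e.\ that the boundary clock does not charge the intersections of two or more faces. This is a boundary property of Reiman--Williams type; even for semimartingale reflecting Brownian motion in an orthant with constant directions it requires a substantial separate proof, and it is not established (nor needed) in the generality of this paper -- curved faces, variable directions, general diffusion coefficients. Your sketch of how to get it does not close the gap: Lemma \ref{takeoff} (and Lemma 3.1 of \cite{DW95}) rules out $Y$ spending an \emph{initial interval} of time on $\partial E_0$, which is a statement about an interval on which $d\lambda_0=0$; the set $\{s:|I(Y(s))|\geq 2\}$ can be a closed set of empty interior interleaved with excursions, and the displacement/local-time argument you invoke gives no control over the $d\lambda_1$-measure of such a set. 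Note also that the paper's own SDE formulation $(\ref{SDER})$ deliberately allows $\gamma(t)$ to be an arbitrary element of the cone $G(X(t))$ at corner times, i.e.\ the paper never claims corners are not charged.

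The paper's proof sidesteps the issue entirely by splitting, rather than assigning, the boundary measure at corners: it constructs a Borel map $\Theta:\Xi\to[0,\infty)^m$ with $u=\sum_i\Theta_i(x,u)g^i(x)$ and $\Theta_i(x,u)=0$ for $i\notin I(x)$ (which exists because $u\in G(x)$ means precisely that $u$ is a nonnegative combination of the active $g^i(x)$), and sets $l_i(t)=\int_{[0,t]\times U}\Theta_i(Y(s),u)\,\Lambda_1(ds\times du)$. With this choice the identity $\sum_iB_if(Y(s))\,dl_i(s)=\int_U Bf(Y(s),u)\Lambda_1(ds\times du)$ holds pointwise by construction, at corners included, and the only genuine technical point is the Borel measurability of the coefficient selection, handled via the Moore--Penrose pseudo-inverse of $[g^1(x),\dots,g^m(x)]$ together with a measurable argmin. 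I would recommend you redefine your $l_i$ along these lines; the rest of your verification (adaptedness, monotonicity, $\sum_il_i=\lambda_1$, support in $\bar E_i$) then goes through essentially as you wrote it.
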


\begin{proof}
First, we show that there is a Borel  
mapping $\Theta :\Xi\rightarrow \{\eta\in [0,\infty )^m:\,\sum_{i
=1}^m\eta_i=1\}$ such that 
\[u=\sum_{i=1}^m\Theta_i(x,u)g^i(x),\quad\Theta_i(x,u)=0\mbox{\rm \ for }
i\notin I(x).\]
Let $\mathbb G(x)\equiv [g_1(x),\ldots ,g_m(x))]$, and let $\mathbb 
G^{+}(x)$ be the 
Moore-Penrose pseudo-inverse (see \cite{BG03}, Chapter 
1). $\mathbb G^{+}(x)$ is a Borel function of $\mathbb G(x)$, hence of $
x$. 
Then, for each $u\in\R^d$ such that $\mathbb G(x)\eta =u$ has at least 
one solution , all solutions have the form
\[\eta (w)=\mathbb G^{+}(x)u+(I-\mathbb G^{+}(x)\mathbb G(x))w,\quad 
w\in\mathbb R^m.\]
For $(x,u)\in\Xi$, let $w^0(x,u):=\mbox{\rm argmin}|w|$, where the minimum is taken over all 
$w$ such that $\eta_i(w)\geq 0$, $i=1,...,m$, 
$\eta_i(w)=0$, $i\notin I(x)$, $\sum_{i=1}^m\eta_i(w)=1$. $w^0$ is a Borel 
function (\cite{DB19}). Then the mapping 
\[\Theta (x,u)\equiv\mathbb G^{+}(x)u+(I-\mathbb G^{+}(x)\mathbb 
G(x))w^0(x,u)\]
has the desired properties.

The assertion follows by defining 
\begin{equation}l_i(t)=\int_{[0,t]\times U}\Theta_i(Y(s),u)\Lambda_
1(ds\times du),\quad i=1,\ldots ,m.\label{ctrl-to-pwk}\end{equation}
\end{proof}

\setcounter{equation}{0}

\section{Examples of application to other boundary 
conditions}\label{applications}

\subsection{Non-local boundary conditions} Let 
$A\subset C(E)\times C(E)$ with ${\cal D}(A)$ dense in $C(E)$, and assume that 
there exist solutions of the martingale problem for $A$ 
with sample paths in $D_E[0,\infty )$ for all initial distributions 
$\nu\in {\cal P}(E)$.  

Let $U\equiv \{1\}$ and $B$ be defined by 
\[Bf(x,1)\equiv Bf(x)\equiv\int (f(y)-f(x))\eta (x,dy),\]
where $\eta$ is a transition function on $E$ and, for all $x\in E$, 
\[\eta (x,E_0)=\eta (x,E)=1.\]
Then the controlled martingale problem requires
\[f(Y(t))-f(Y(0))-\int_0^tAf(Y(s))d\lambda_0(s)-\int_0^tBf(Y(s))d
\lambda_1(s)\]
to be a martingale.  Note that the assumption that 
$\eta (x,E_0)=1$ implies that for every solution of the 
controlled martingale problem $P\{\tau (0)<\infty \}=1$. In fact, if 
$Y(0)\in E_0^c$, $P\{\tau (0)>t\}\leq e^{-t}$, since $B$ is a generator of a pure 
jump process with unit exponential holding times.  
Consequently, by Lemma \ref{laminf}, $\lambda_0(t)\rightarrow\infty$.

Processes of this type have been considered in a variety 
of settings, for example \cite{DN90,SS94}.  Semigroups 
corresponding to processes with nonlocal boundary 
conditions of this type have been considered in 
\cite{AKK16}.  Related models are considered in 
\cite{MR85}.

\subsection{Wentzell boundary conditions}\label{Wentz}
Let $A\subset C(E)\times C(E)$ and $B\subset C(E)\times C(E)$ be generators such 
that for every $\mu\in {\cal P}(E)$  there exist solutions of the 
martingale problem for $(A,\mu )$ and $(B,\mu )$, every solution  of 
the martingale problem for $A$ has continuous sample 
paths and every solution for $B$ has cadlag sample paths.  
In addition, assume that if $Z$ is a solution of the martingale 
problem for $B$ with $Z(0)\in\bar {E}_0$, then $Z(t)\in E_0$ for all 
$t>0$. Set $U\equiv \{1\}$ and $Bf(\cdot ,1)\equiv Bf$. 

Let $\mu\in {\cal P}(\bar {E}_0)$, let $Y^{\epsilon}(0)$ have distribution $
\mu$ and let $Y^{\epsilon}$ 
evolve as a solution of the martingale problem for $A$ 
until the first time $\tau_1^{\epsilon}$ that $Y^{\epsilon}$ hits $
\partial E_0$.  After time $\tau_1^{\epsilon}$, 
let $Y^{\epsilon}$ evolve as a solution of the martingale problem for 
$B$ until $\sigma_1^{\epsilon}\equiv\inf\{t>\tau_1^{\epsilon}:$ $\inf_{
x\in\partial E_0}|Y^{\epsilon}(t)-x|\geq\epsilon \}$.  
Recursively define $\tau_k^{\epsilon}$ and $\sigma_k^{\epsilon}$ and assume $
\sigma_0^{\epsilon}=0$.  By 
pasting, $Y^{\epsilon}$ is constructed so that for $f\in {\cal D}$, 
\[f(Y^{\epsilon}(t))-f(Y(0))-\int_0^t\left(\sum_{k=0}^{\infty}{\bf 1}_{
[\sigma_k^{\epsilon},\tau_{k+1}^{\epsilon})}(s)Af(Y^{\epsilon}(s)
)+\sum_{k=1}^{\infty}{\bf 1}_{[\tau_k^{\epsilon},\sigma_k^{\epsilon}
)}(s)Bf(Y^{\epsilon}(s))\right)ds\]
is a martingale.  Define
\[\lambda^{\epsilon}_0(t)=\int_0^t\sum_{k=0}^{\infty}{\bf 1}_{[\sigma_
k^{\epsilon},\tau_{k+1}^{\epsilon})}(s)ds.\]
Assume that ${\cal D}={\cal D}(A)={\cal D}(B)$ is dense in $C(E)$.  Then, by 
Theorem 3.9.4 of \cite{EK86},
$\{(Y^{\epsilon},\lambda_0^{\epsilon},\lambda_1^{\epsilon}),\epsilon 
>0\}$ is relatively compact, and every limit point 
$(Y,\lambda_0,\lambda_1)$ will give a solution of the controlled martingale problem, 
that is, for every $f\in {\cal D}$
\[f(Y(t))-f(Y(0))-\int_0^tAf(Y(s))d\lambda_0(s)-\int_0^tBf(Y(s))d
\lambda_1(s)\]
is a $\{{\cal F}_t^{(Y,\lambda_0,\lambda_1)}\}$-martingale. 

Our assumptions imply that $\lambda_0$ is strictly increasing, so 
$\lambda_0(t)\rightarrow\infty$ by Lemma \ref{laminf}.

Diffusions with Wentzell boundary conditions have been 
studied in
\cite{Wat71a,Wat71b,And76}.
Note that \cite{Wat71a,Wat71b} 
study the models using stochastic differential equations 
while \cite{And76} uses submartingale problems.  
\cite{Gra88} formulates what we call the constrained 
martingale problem.

\bibliography{martprob}

\vskip .3 in
\date{\noindent{\bf Acknowledgments.} We are grateful to Fausto Di Biase who provided a nice answer to a measurability issue which arose in the proof of Theorem \ref{mgpeqiv}.

This paper was completed while the second author was visiting the University of California, San Diego with the support of the Charles Lee Powell Foundation.  The hospitality of that institution, particularly that of Professor Ruth Williams, was greatly appreciated.  }

\end{document}